\documentclass[letterpaper,12pt]{amsart}
\usepackage{amscd, amssymb}
\usepackage[driverfallback=hypertex]{hyperref}
\usepackage{comment}
\usepackage{changepage}
\usepackage{enumitem}
\usepackage{wrapfig}
\usepackage{tikz}
\usepackage{mathtools}
\usepackage{graphicx}
\usetikzlibrary{decorations.markings}
\setlist[enumerate,1]{label={(\alph*)}}
\input xy
\xyoption{all}

\newtheorem{tm}{Theorem}[section]
\newtheorem{pr}[tm]{Proposition}
\newtheorem{lm}[tm]{Lemma}
\newtheorem{cy}[tm]{Corollary}

\theoremstyle{definition}
\newtheorem{df}[tm]{Definition}

\theoremstyle{remark}
\newtheorem{rem}[tm]{Remark}

\newcommand{\R}{\mathbb R}
\newcommand{\C}{\mathbb C}
\newcommand{\N}{\mathbb N}

\renewcommand{\P}{\mathbb P}
\newcommand{\BL}{\mathbb L}

\renewcommand{\H}{\mathcal H}

\newcommand{\p}{\tilde e}
\renewcommand{\k}{\mathfrak k}

\newcommand{\U}{\mathcal U}

\newcommand{\W}{\mathcal W}

\DeclareMathOperator{\im}{Im}
\DeclareMathOperator{\re}{Re}
\let\re\undefined

\DeclareMathOperator{\re}{Re}
\DeclareMathOperator{\id}{id}
\DeclareMathOperator{\vol}{vol}

\DeclareMathOperator{\Diff}{Diff}

\DeclareMathOperator{\hol}{Hol}

\DeclareMathOperator{\supp}{supp}

\DeclareMathOperator{\ham}{Ham}

\DeclareMathOperator{\hess}{Hess^{N}}
\DeclareMathOperator{\ric}{Ric}

\newcommand{\OO}{\mathcal O}
\newcommand{\HH}{\mathcal H}

\newcommand{\bL}{\mathbb L}
\renewcommand{\i}{\sqrt{-1}}
\newcommand{\T}[1]{\mathcal T_{#1}}

\newcommand{\vp}{\varphi}
\newcommand{\ii}{\sqrt{-1}}
\newcommand{\cP}{\mathcal P}

\newcommand{\ef}{\mathfrak{e}}

\title{Quantization of positive Lagrangians}

\author[J. Solomon]{Jake P. Solomon}
\address{Institute of Mathematics, Hebrew University}
\email{jake@math.huji.ac.il}

\author[B. Yavetz]{Bentzion Yavetz}
\address{Institute of Mathematics, Hebrew University}
\email{bentzion.yavetz@mail.huji.ac.il}
\begin{document}

\subjclass[2020]{81S10, 32V25 (Primary) 58B20, 53C38, 53D12 (Secondary)}
\keywords{positive Lagrangian, almost calibrated Lagrangian, H\"ormander estimate, quantization, path metric}
\date{October 2026}
\begin{abstract}
A Lagrangian submanifold in an almost Calabi-Yau manifold is called positive if the real part of the holomorphic volume form restricted to it is positive. It is called $\epsilon$-positive if the ratio of the restrictions of the imaginary and real parts of the holomorphic volume form is bounded by $\frac{1}{\epsilon}.$ An exact isotopy class of positive Lagrangian submanifolds admits a natural Riemannian metric. We prove that the Riemannian metric restricted to the subset of $\epsilon$-positive Lagrangians induces a path metric when the relative K\"ahler class of the ambient Calabi-Yau manifold lies on a line of rational slope. 

The proof relies on the construction of a sequence of quantization maps from a circle bundle over the space of positive Lagrangians to the dual of the space of holomorphic sections of powers of a prequantum line bundle. The rationality condition for the relative K\"ahler class allows us to reduce to the case of Bohr-Sommerfeld Lagrangians. Building on ideas of Tian and Nirenberg-Wells, H\"ormander estimates are used to show that functions on a Bohr-Sommerfeld Lagrangian can be approximated by holomorphic sections of a sufficiently high power of the prequantum line bundle. Consequently, the quantization maps eventually separate fibers of the circle bundle. 
\end{abstract}

\maketitle

\pagestyle{plain}

\tableofcontents

\section{Introduction}
Let $X$ be a compact almost Calabi-Yau manifold of complex dimension $n.$ Namely, $X$ is a smooth manifold equipped with a complex structure $J,$ a K\"ahler form $\omega,$ and a nowhere vanishing holomorphic $(n,0)$ form $\Omega.$ An oriented Lagrangian submanifold $\Gamma \subset X$ is called \emph{positive} if $\re\Omega|_\Gamma > 0.$ This condition is also known as being almost-calibrated~~\cite{Wan01}.

Let $\OO$ be an exact isotopy class of closed positive Lagrangians. That is, $\OO$ is the space of all closed positive Lagrangian submanifolds $\Gamma \subset X$ that can be moved by a Hamiltonian flow through positive Lagrangians to a fixed $\Gamma_* \subset X.$ Assume $\Gamma_*$ is connected.
Let $\Gamma \in \OO$ and let
\[
\T{\Gamma} = \left\{h \in C^\infty(\Gamma,\R)\left| \int_\Gamma h \re \Omega = 0\right.\right\}.
\]
There is a canonical isomorphism $T_\Gamma \OO \cong \T{\Gamma}.$ Thus we define a Riemannian metric $G$ on $\OO$ by
\[
G(h,k) = \int_\Gamma hk \re \Omega, \qquad h,k \in \T{\Gamma}.
\]
It was shown in~~\cite{So13a} that $G$ has non-positive sectional curvature. Initial results on existence of geodesics for the metric $G$ have been obtained in~~\cite{RuS15,SoY15,SoY20a,SoY20b}. However it remains an open question to determine whether $G$ induces a non-degenerate path metric on $\OO.$ In the present paper we make a first step in this direction.

For $\epsilon > 0,$ a positive Lagrangian $\Gamma \subset X$ is called $\epsilon$-positive if
\[
\epsilon \left | \frac{\im \Omega|_\Gamma}{\re\Omega|_\Gamma}\right| < 1.
\]
A similar condition appears in the definition of almost calibrated in~~\cite{Ne07}. Observe that for any compact positive Lagrangian $\Gamma,$ there exists $\epsilon > 0$ such that $\Gamma$ is $\epsilon$-positive. Suppose $\Gamma_* \in \OO$ is $\epsilon$-positive. Let $\OO^\epsilon \subset \OO$ denote the open subspace of $\epsilon$-positive Lagrangians that can be moved by a Hamiltonian flow through $\epsilon$-positive Lagrangians to $\Gamma_*.$ So, $\OO^\epsilon$ inherits the Riemannian metric $G$ from $\OO.$
Our main result is the following.
\begin{tm}\label{tm:pm}
Assume that $[\omega] \in H^2(X,\Gamma_*;\R)$ lies on a line of rational slope. Then the Riemannian metric $G$ induces a non-degenerate path metric on $\OO^\epsilon.$
\end{tm}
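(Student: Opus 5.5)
To prove Theorem~\ref{tm:pm}, I will show that for any two distinct $\Gamma_0,\Gamma_1\in\OO^\epsilon$, the infimum of $G$-lengths of paths in $\OO^\epsilon$ joining them is positive. The strategy, following the abstract, is to construct maps from (a circle bundle over) $\OO^\epsilon$ to a finite-dimensional space carrying a fixed metric. These maps should be Lipschitz with respect to $G$ and should eventually separate points. Then the $G$-length of a path bounds from below the length of its image, which is in turn bounded below by the distance between the images of the endpoints.

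\textbf{Step 1 (reduction to Bohr--Sommerfeld).} Rescale $\omega$ by a positive constant. This changes $G$ only by a factor and preserves positivity, since positivity depends only on $\Omega$ and the Lagrangian condition. By the rational slope hypothesis, I can arrange that $[\omega]\in H^2(X,\Gamma_*;\R)$ is the image of an integral class. Then there is a prequantum Hermitian holomorphic line bundle $L\to X$ with curvature $-i\omega$, and $\Gamma_*$ is Bohr--Sommerfeld: $L|_{\Gamma_*}$ has a flat unit section $\sigma_*$. Exact isotopies preserve the Bohr--Sommerfeld property, so every $\Gamma\in\OO$ carries a flat unit section $\sigma_\Gamma$, unique up to a phase in $S^1$. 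This defines a principal circle bundle $\widetilde\OO\to\OO$. Moving along a path via the Hamiltonian $h_t$ transports $\sigma$ with phase governed by $\int h_t$; since $h_t\in\T{\Gamma_t}$ is normalized, there is a natural connection on $\widetilde\OO$.

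\textbf{Step 2 (quantization maps).} For each $k$, define $Q_k:\widetilde\OO^\epsilon\to H^0(X,L^k)^*$ by
\[
Q_k(\Gamma,\sigma)(s)=\int_\Gamma \langle s,\sigma^k\rangle\,\re\Omega .
\]
Differentiating along a path, with $\Gamma_t$ moving by the normalized Hamiltonian $h_t$, gives
\[
\frac{d}{dt}Q_k(s)=\int_{\Gamma_t}\bigl(ik h_t\langle s,\sigma^k\rangle+\text{terms with } dh_t,\ \im\Omega\bigr)\re\Omega .
\]
The derivative of $\re\Omega|_\Gamma$ under the flow is $d(\iota_{X_h}\re\Omega)$, which equals $-d^*$-type terms involving $\im\Omega/\re\Omega$. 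Integrating by parts moves these onto $s$ and $\sigma$, producing factors bounded using $\epsilon$-positivity. I expect the operator norm of $dQ_k$, with $H^0(X,L^k)^*$ normed via the $L^2$ norm on sections, to satisfy
\[
\|dQ_k(h)\|\le C_k\,\|h\|_{L^2(\re\Omega)}=C_k\,G(h,h)^{1/2}.
\]
The constant $C_k$ depends on $k$ and $\epsilon$ but not on $\Gamma$. The essential point is that the bound needs only an $L^2$ bound on $h$, with no derivatives. This is where the $\epsilon$-positivity bound $|\im\Omega/\re\Omega|<1/\epsilon$, together with uniform bounds on holomorphic sections (finite-dimensional space, so all norms are equivalent), enters. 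Consequently $Q_k$ is $C_k$-Lipschitz from the $G$-path length to the norm on $H^0(X,L^k)^*$. Lifting paths horizontally, the same holds on $\widetilde\OO^\epsilon$.

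\textbf{Step 3 (separation).} Let $\Gamma_0\neq\Gamma_1$. I will show that for some large $k$, $Q_k(\Gamma_0,\sigma_0)\neq Q_k(\Gamma_1,\sigma_1)$ for all choices of phases. This requires approximating, for a given Bohr--Sommerfeld $\Gamma_0$ and a smooth function $f$ on $\Gamma_0$ (for instance a bump near a point of $\Gamma_0\setminus\Gamma_1$), a section $f\sigma_0^k$ on $\Gamma_0$ by a holomorphic section of $L^k$ with small error. The section should also be small on $\Gamma_1$ away from $\Gamma_0$. The construction is:
\begin{enumerate}
\item Extend $f\sigma_0^k$ to a tubular neighborhood using the totally real structure of $\Gamma_0$, in the spirit of Nirenberg--Wells. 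This gives a section that is $\bar\partial$-small to infinite order along $\Gamma_0$ and Gaussian-decaying ($e^{-k d^2}$) away from it, since $\sigma_0$ is flat and the curvature is positive.
\item Correct it by a H\"ormander $L^2$ estimate for $\bar\partial$ on $L^k$, as in Tian, with error $O(k^{-N})$.
\end{enumerate}
Pairing against these sections distinguishes the two images. A compactness argument over the phase circle then gives a positive distance $\delta_k>0$.

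\textbf{Conclusion and main obstacle.} Any path from $\Gamma_0$ to $\Gamma_1$ lifts horizontally to $\widetilde\OO^\epsilon$ and has $G$-length at least $\delta_k/C_k>0$, so the path metric is non-degenerate. I expect the main obstacle to be Step 3: obtaining Bohr--Sommerfeld peak-type sections via H\"ormander estimates with error control that beats the overlaps.

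A secondary difficulty lies in Step 2: making the Lipschitz constant depend on $h$ only through its $L^2$ norm. Here the integration by parts against $\im\Omega$ and the $\epsilon$-positivity hypothesis are essential.
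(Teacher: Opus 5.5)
Your overall architecture matches the paper's:
\begin{itemize}
\item rescale so every $\Gamma\in\OO$ is Bohr--Sommerfeld;
\item take the circle bundle of flat unit sections, with the connection whose horizontal space is $\T{\Gamma}$;
\item map to $H^0(X,L^k)^\vee$ and prove a $\Gamma$-independent Lipschitz bound;
\item separate fibres using a Nirenberg--Wells extension whose norm has a nondegenerate maximum along $\Gamma_0$, followed by a Tian-style H\"ormander correction;
\item conclude by horizontal lifting.
\end{itemize}
Steps~1 and~3 and the conclusion are fine as sketches. The genuine gap is Step~2, and it comes from your choice of $\re\Omega$ rather than $\Omega$ in $Q_k$.

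\textbf{What the paper does.} It pairs with $\Omega$. Since $s\Omega$ is a holomorphic $(n,0)$-form with values in $L^m$, $D(s\Omega)=0$. The Cartan formula then gives exactly $d\varphi_m(h)(s)=-2\pi i m\int_\Gamma h\,\frac{s}{e^m}\,\Omega$ (Proposition~\ref{pr:der}), with no derivatives of $h$. The estimate $\bigl|\Omega|_\Gamma\bigr|_g\le(1+\frac1\epsilon)\re\Omega|_\Gamma$ and Cauchy--Schwarz then give Theorem~\ref{tm:bound}.

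\textbf{What goes wrong with $\re\Omega$.} Here $D(s\re\Omega)=\frac12\,\partial^D s\wedge\bar\Omega\neq0$. So the derivative acquires an extra term $\frac12\int_\Gamma\sigma^{-k}\,\iota_v(\partial^D s\wedge\bar\Omega)$, with $v$ the deformation field. This works out to a multiple of $\int_\Gamma\langle dh,d(s/\sigma^k)\rangle\,\bar\Omega|_\Gamma$. Your single integration by parts only moves $d$ from $\iota_{X_h}\re\Omega$ onto $s/\sigma^k$; the factor $\iota_{X_h}\re\Omega$ still contains $dh$. A second naive integration by parts on $\Gamma$ produces $\Delta_\Gamma(s|_\Gamma)$ and $d\theta$, i.e.\ second fundamental form and mean curvature terms. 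But $\epsilon$-positivity bounds only the Lagrangian angle $\theta$, not its derivatives. So the argument you give does not produce a $\Gamma$-independent $C_k$.

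\textbf{The missing idea and two fixes.} You need an identity that turns the $dh$-term into $h$ times an ambient form restricted to $\Gamma$.
\begin{itemize}
\item Replace $\re\Omega$ by $\Omega$. This is the paper's route, and it makes Step~2 a two-line computation.
\item Or keep $\re\Omega$ and use the Lefschetz isomorphism $\omega\wedge\colon\Lambda^{0,n-1}\to\Lambda^{1,n}$ to write $\partial^D s\wedge\bar\Omega=\omega\wedge\beta_s$. Since $\omega|_\Gamma=0$ and $\iota_v\omega|_\Gamma=dh$, the extra term becomes $\frac12\int_\Gamma dh\wedge\sigma^{-k}\beta_s=-\frac12\int_\Gamma h\,\sigma^{-k}(D\beta_s)|_\Gamma$. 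This is bounded by $C_k(1+\frac1\epsilon)\|h\|_{L^1(\re\Omega)}$ uniformly over $\OO^\epsilon$; equivalently, the mean-curvature terms cancel.
\end{itemize}
Without one of these, the Lipschitz bound --- one of the two pillars of the proof --- is asserted rather than established.
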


The non-degeneracy of the path metric induced by the restriction of $G$ to Hamiltonian isotopy classes of $O_n$-invariant positive Lagrangian spheres in $n$-dimensional $A_m$ Milnor fibers was proved in~\cite{SoY15} without any assumption on $\omega$.

If $[\omega] \in H^2(X,\Gamma_*;\R)$ lies on a line of rational slope, then for any $\Gamma \in \OO$ also $[\omega] \in H^2(X,\Gamma;\R)$ lies on a line of rational slope. So, the assumption of Theorem~\ref{tm:pm} depends only on $\OO.$ After multiplying by a suitable positive constant we may assume without loss of generality that $[\omega]\in H^2(X,\Gamma;\R)$ is integral for any $\Gamma\in \OO$.

The main ingredient in the proof of Theorem~~\ref{tm:pm} is the following family of quantization maps. By~\cite[Lemma~6]{Ve20}, choose a holomorphic line bundle $\bL \to X$ with a Hermitian metric $h$ and Chern connection $D$ such that the curvature satisfies $F_D = -2\pi\i  \omega$ and the flat bundle $(\bL,D)|_{\Gamma_*}$ has trivial holonomy. That is, $\Gamma_*$ satisfies the Bohr-Sommerfeld condition. It follows that $(\bL,D)|_{\Gamma}$ has trivial holonomy for all $\Gamma \in \OO.$
Definition~\ref{df:cP} gives a smooth $U(1)$ bundle
\[
\pi_\cP : \cP \to \OO^\epsilon
\]
which satisfies
\begin{equation}
\begin{gathered}
\cP = \{(\Gamma,e) \,|\, \Gamma \in \OO^\epsilon, \, e \in C^\infty(\bL|_{\Gamma}), \, De = 0, \, |e|_h = 1\},  \label{eq:cP}\\
    \pi_\cP(\Gamma,e) = \Gamma.
\end{gathered}
\end{equation}
Definition~\ref{df:metric} gives a Riemannian metric $\overline G$ on $\cP$, for which the projection $\pi:\cP\to \OO$ is a Riemannian submersion. Let
\[
\H_m = H^0(X,\bL^{\otimes m}).
\]
For $(\Gamma,e) \in \cP,$
define a linear functional
\[
\psi_{\Gamma,e}: \H_m \to \C
\]
by
\[
\psi_{\Gamma,e}(s) = \int_\Gamma \frac{s}{e^m} \Omega.
\]
Define a map
\[
\vp_m : \cP \to \H_m^\vee
\]
by
\[
\vp_m(\Gamma,e) = \psi_{\Gamma,e} \in \H_m^\vee.
\]
Theorem~~\ref{tm:pm} is a consequence of the following two results on $\vp_m.$
\begin{tm}\label{tm:bound}
There exists a constant $C_m$ depending only on $m$ and the geometry of $X$ such that
\[
\| d\vp_m\| \leq C_m.
\]
\end{tm}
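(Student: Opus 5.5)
We compute the differential of $\vp_m$ along a path in $\cP$ and bound it by a sup-norm estimate on holomorphic sections. The norm on $\H_m^\vee$ is dual to the $L^2$ norm on $\H_m$ induced by $h^m$ and $\omega^n/n!$. The norm on $T\cP$ is the one given by $\overline G$ of Definition~\ref{df:metric}.

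\textbf{Tangent vectors.} A tangent vector to $\cP$ at $(\Gamma,e)$ has two components. The first is a horizontal part $h\in\T{\Gamma}$, corresponding to the Hamiltonian vector field $X_h$, extended arbitrarily off $\Gamma$ and generating a flow $\phi_t$. The second is a vertical part, a constant $c\in\R$, rotating $e\mapsto e^{\i c t}e$. Along the flow $\Gamma_t=\phi_t(\Gamma)$, the parallel unit section $e_t$ on $\Gamma_t$ is obtained from $e$ by transport along the flow lines. Since $F_D=-2\pi\i\omega$ and $\iota_{X_h}\omega = -dh$ (with the sign convention of the paper), we get
\[
\frac{d}{dt}\phi_t^* e_t = 2\pi\i\, (h + c)\, \phi_t^*e_t ,
\]
where the zero-average normalization in $\T{\Gamma}$ and the horizontal distribution of Definition~\ref{df:cP} fix the constant. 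Then
\[
\frac{d}{dt}\Big|_{t=0}\psi_{\Gamma_t,e_t}(s) = \int_\Gamma \mathcal L_{X_h}\Big(\frac{s}{e^m}\Omega\Big) - 2\pi\i m\int_\Gamma (h+c)\frac{s}{e^m}\Omega .
\]
By Cartan's formula, $\Omega$ is closed of top degree on $X$ and $\Gamma$ is closed. So the Lie derivative term becomes $\int_\Gamma \iota_{X_h}d\big(\tfrac{s}{e^m}\big)\Omega$ plus exact terms, where $d(s/e^m)$ is computed with $D$. Rewriting $\iota_{X_h}$ through $J\nabla h$ and using holomorphicity of $s$, this term becomes $\int_\Gamma \langle D s, \cdot\rangle$ contracted with $dh$. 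Integrating by parts on $\Gamma$ then gives $\int_\Gamma h\, (\text{first-order expression in } Ds)\, e^{-m}\Omega$.

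\textbf{Bounds.} Each term has the form $\int_\Gamma h\, \Phi(s)\,\Omega$ or $c\int_\Gamma \Phi(s)\,\Omega$, where $\Phi(s)$ involves $s$, $Ds$ and $D^2s$ with coefficients depending on $m$ and the geometry of $X$. We use two inputs.
\begin{itemize}
\item[(i)] Since $\H_m$ is finite dimensional, all norms on it are equivalent. So $\sup_X (|s| + |Ds| + |D^2 s|)\le C'_m\|s\|_{L^2}$.
\item[(ii)] By Cauchy--Schwarz, $\big|\int_\Gamma h\, f\,\Omega\big|\le \|h\|_{G}\,\big(\int_\Gamma |f|^2 |\Omega|_\Gamma|\big)^{1/2}$.
\end{itemize}
The remaining issue is that $|\Omega|_\Gamma|$ differs from $\re\Omega|_\Gamma$. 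On $\OO^\epsilon$ we have $|\im\Omega|_\Gamma|<\epsilon^{-1}\re\Omega|_\Gamma$, so $|\Omega|_\Gamma|\le (1+\epsilon^{-2})^{1/2}\re\Omega|_\Gamma$. Also $\int_\Gamma\re\Omega$ is a cohomological constant independent of $\Gamma\in\OO$. Combining these gives
\[
|d\vp_m(h,c)(s)|\le C_m (\|h\|_G+|c|)\|s\|_{L^2}.
\]
Here $\|h\|_G+|c|$ is controlled by $\overline G$.

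\textbf{Main obstacle.} The delicate step is the integration by parts. We must move all derivatives off $h$ onto $s$, since $\|h\|_G$ controls only the $L^2$ norm of $h$ and not $dh$. Failing that, the estimate would be false. The way around is that $X_h$ acts on the holomorphic section $s$. Decompose the complexified derivative of $s/e^m$ along $\Gamma$. Then the term $\int_\Gamma \langle d(s/e^m), J\nabla h\rangle\,\Omega$ can be rewritten, via the Lagrangian condition and $\bar\partial s=0$, as $\int_\Gamma dh\wedge \alpha$ for a form $\alpha$ built from $Ds$ and $\Omega$. Integrating by parts gives $\int_\Gamma h\, d\alpha$, and only $D^2s$ and the geometry of $\Omega$ and $\omega$ enter $d\alpha$. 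Any Jacobian factor coming from the volume of $\Gamma$ is again handled by $\epsilon$-positivity. So the constant depends on $\epsilon$ as part of the setting, on $m$, and on the geometry of $X$.
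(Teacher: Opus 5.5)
Your overall architecture matches the paper's: differentiate $\psi_{\Gamma_t,e_t}$ along a path, then use finite-dimensionality of $\H_m$, Cauchy--Schwarz, $\epsilon$-positivity and constancy of $\int_\Gamma\re\Omega$. However, the derivative computation contains a genuine error, and that error is what creates your ``main obstacle''. Write $f=s/e^m$, with $e$ transported along the flow lines. Then $\mathcal L_{X_h}(f\Omega)=\iota_{X_h}(df\wedge\Omega)+d\iota_{X_h}(f\Omega)$, and $\iota_{X_h}(df\wedge\Omega)=(X_hf)\,\Omega-df\wedge\iota_{X_h}\Omega$.

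You keep only $(X_hf)\Omega=\langle d(s/e^m),J\nabla h\rangle\Omega$. The second piece comes from $f\,\mathcal L_{X_h}\Omega=f\,d\iota_{X_h}\Omega$, and you dropped it. Note that $\Omega$ is an $n$-form on a real $2n$-manifold, not of top degree, and $f\,d\iota_{X_h}\Omega$ is not exact. If you keep both pieces, the $De$ contributions vanish on $\Gamma$, because $e$ is parallel along $\Gamma$ and along $X_h$. What remains is $\int_\Gamma e^{-m}\iota_{X_h}(Ds\wedge\Omega)$. But $Ds\wedge\Omega=\partial s\wedge\Omega$ is an $(n+1,0)$-form, hence zero. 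This is the identity $D(s\Omega)=0$ used in the proof of Proposition~\ref{pr:der}. So the entire Lie-derivative term vanishes, and $d\varphi_m(h)(s)=-2\pi\sqrt{-1}\,m\int_\Gamma h\,\frac{s}{e^m}\,\Omega$ contains no derivatives of $h$.

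As written, your handling of the spurious term also does not prove the uniform bound. Integrating $\int_\Gamma\langle Ds,\nabla^\Gamma h\rangle e^{-m}\Omega$ by parts on $\Gamma$ produces three kinds of terms: divergences with respect to the induced metric, derivatives of tangential projections of ambient fields, and derivatives of the phase of $\Omega|_\Gamma$. All of these involve the second fundamental form (mean curvature) of $\Gamma$. That is not bounded on $\OO^\epsilon$, since $\epsilon$-positivity is only a zeroth-order condition. Your claim that only $D^2s$ and the geometry of $\Omega,\omega$ enter $d\alpha$ would need $\alpha$ to be the restriction to $\Gamma$ of a globally defined form, which you never arrange. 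Uniformity of $C_m$ over $\OO^\epsilon$ is exactly what the proof of Theorem~\ref{tm:pm} uses, so this step cannot be left vague.

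Once you observe the vanishing, your steps (i) and (ii), now needing only $\sup_X|s|_h$, together with the $\epsilon$-positivity estimate, finish the proof exactly as in the paper.
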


\begin{tm}\label{tm:separation}
For any distinct Lagrangians $\Gamma_0,\Gamma_1 \in \OO^\epsilon$ there exists $M \in \N$ such that $\vp_m(\Gamma_0,e_0) \neq \vp_m(\Gamma_1,e_1)$ for all $m \geq M$ and all $e_0\in \cP_{\Gamma_0},e_1\in\cP_{\Gamma_1}$.
\end{tm}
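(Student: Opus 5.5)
The plan is to find, for $m$ large, a single holomorphic section $s\in\H_m$ that the two functionals treat very differently. Let $\Gamma_0\neq\Gamma_1$. Since both are closed, connected and distinct, there is a point $p\in\Gamma_0\setminus\Gamma_1$. Choose a smooth real function $f\ge0$ on $\Gamma_0$ supported in a small ball around $p$ that misses $\Gamma_1$, with $\int_{\Gamma_0} f\,\re\Omega>0$. Fix $e_0$. I would approximate the section $f e_0^m$ of $\bL^m|_{\Gamma_0}$ by global holomorphic sections $s_m\in\H_m$, in the spirit of Tian and Nirenberg--Wells. Because $e_0$ is $D$-flat on the Lagrangian $\Gamma_0$ (Bohr--Sommerfeld), $e_0$ extends to a section $\tilde e_0$ on a tubular neighbourhood with $|\tilde e_0|_h\approx e^{-\pi\, d(\cdot,\Gamma_0)^2}$ and $\bar\partial \tilde e_0$ vanishing to high order along $\Gamma_0$. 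Likewise $f$ extends almost-holomorphically, for instance by a formal Taylor extension in the normal direction $J T\Gamma_0$ multiplied by a cutoff. Then $\sigma_m=\chi\,\tilde f\,\tilde e_0^m$ is concentrated in an $m^{-1/2}$-neighbourhood of $\Gamma_0$, has $\|\bar\partial\sigma_m\|_{L^2}=O(m^{-N})$ relative to its size, and decays like $e^{-cm}$ away from $\Gamma_0$.

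Next, solve $\bar\partial u_m=\bar\partial\sigma_m$ using H\"ormander's $L^2$ estimate on $\bL^m$, whose curvature is $m\omega$. For $m$ large (twisting by $K_X^{-1}$, which is trivial since $\Omega$ exists) this gives $\|u_m\|_{L^2}\le Cm^{-1/2}\|\bar\partial\sigma_m\|_{L^2}$. Set $s_m=\sigma_m-u_m$. To control $\psi$ one needs pointwise bounds on $u_m$ along the Lagrangians. These come from standard elliptic estimates on balls of radius $m^{-1/2}$, since $\bar\partial u_m$ is explicit and small there. The result is $\sup_{\Gamma_0\cup\Gamma_1}|u_m|_h=O(m^{-N'})$.

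With this section in hand, compute the two functionals. Using $|e|_h=1$,
\[
\psi_{\Gamma_0,e_0}(s_m)=\int_{\Gamma_0} f\,\Omega+O(m^{-N'}).
\]
Also $\psi_{\Gamma_1,e_1}(s_m)=O(e^{-cm})+O(m^{-N'})$ for every unit flat $e_1$, because $\sigma_m$ is exponentially small on $\Gamma_1$, which is at positive distance from $\operatorname{supp}\chi\tilde f$. Since
\[
\re\int_{\Gamma_0} f\Omega=\int f\,\re\Omega>0,
\]
the two values differ once $m\ge M$.

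The dependence on $e_0$ is harmless. Replacing $e_0$ by $\lambda e_0$ with $|\lambda|=1$ multiplies $\psi_{\Gamma_0,e_0}$ by $\lambda^{-m}$, and the bound on $\psi_{\Gamma_1,e_1}$ is uniform in $e_1$. So for a given $e_0$ we can use the section $\lambda^{m}s_m$ built from the base choice, and the same $M$ works for all $e_0,e_1$.

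I expect the main obstacle to be the construction of the almost-holomorphic peak sections together with the passage from $L^2$ to uniform estimates along a real submanifold. The integral over $\Gamma_0$ is a restriction to a measure-zero set, so $L^2$ control alone is insufficient. The first thing to try is to show that $\bar\partial\sigma_m$ is $O(m^{-N})$ in weighted $C^k$ norms by using the vanishing-order of $\bar\partial\tilde e_0$ and $\bar\partial\tilde f$ along $\Gamma_0$ against the Gaussian decay. One would then apply local Bergman-kernel or mean-value estimates for $u_m$, which is holomorphic up to a small error, on rescaled balls.
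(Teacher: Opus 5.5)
Your proposal is correct and follows essentially the paper's route. The paper packages your peak-section construction (almost-holomorphic extension of $b e_0^m$, with $|\tilde e_0|$ having a nondegenerate maximum along $\Gamma_0$, a H\"ormander $L^2$ correction, and an elliptic/Sobolev upgrade to sup bounds) as Theorem~\ref{tm:approximation1} with $V=X\setminus\Gamma_1$ and $r=0$, and then shows $|\psi_{\Gamma_1,e_1}(s)|<|\psi_{\Gamma_0,e_0}(s)|$, which is equivalent to your rotation of $s_m$ by $\lambda^m$; your fixed normal cutoff and rescaled-ball estimates, in place of the shrinking cutoff $\eta(\rho^2/4R_m^2)$ and Lemma~\ref{lm:fundamental}, only change the polynomial losses, which the large vanishing order $N$ absorbs.
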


In the case where $\Gamma_0,\Gamma_1$ have a clean and connected intersection, Theorem~\ref{tm:separation} can be deduced from \cite[Theorem 1.2]{Io22}.
We prove Theorem~\ref{tm:separation} using the following holomorphic approximation theorem.
\begin{tm}\label{tm:approximation1}
    For any $(\Gamma,e)\in \cP$, any $b\in C^\infty(\Gamma,\C),r\in \N$ and $\varepsilon>0$ and any open $V\subseteq X$ such that $\supp b\subseteq V\cap \Gamma,$ there exists $M\in\N$ such that for all $m\ge M$ there exists $s\in \H_m$ with
    \[
    \left\|s|_\Gamma-be^m\right\|_{C^r}<\varepsilon, \qquad\left\|s|_{X\setminus V}\right\|_{C^r}<\varepsilon.
    \]
\end{tm}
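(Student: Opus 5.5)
We follow the standard strategy of Tian and Nirenberg--Wells: build an approximately holomorphic section of $\bL^{\otimes m}$ concentrated near $\Gamma$, then correct it by a H\"ormander $L^2$ estimate. The positivity of $\bL$ makes the correction small for large $m$.

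\textbf{Step 1: extend $e$ to a peaked section.} Since $\Gamma$ is Lagrangian and $D e=0$ on $\Gamma$, we work on a Weinstein-type tubular neighborhood $U\subset V\cup(X\setminus\supp b)$ of $\Gamma$, identified with a neighborhood of the zero section in $T^*\Gamma$ and equipped with coordinates adapted to $J$ along $\Gamma$. On $U$ we construct a smooth section $\tilde e$ of $\bL$ with the following properties:
\begin{itemize}
\item $\tilde e|_\Gamma=e$;
\item $|\tilde e|_h=e^{-\phi}$, where $\phi\ge c\,d(\cdot,\Gamma)^2$;
\item $\bar\partial\tilde e$ vanishes to infinite order along $\Gamma$.
\end{itemize}
To get these, solve for the jets of a local potential order by order, using that $\Gamma$ is totally real: the formal $\bar\partial$-problem along a totally real submanifold is solvable to all orders, and the real analytic case is the Nirenberg--Wells extension, here done in its smooth Borel-lemma form. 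The quadratic decay of $|\tilde e|$ follows from the curvature $F_D=-2\pi\i\omega$ being positive in the normal directions $J T\Gamma$. Likewise, we extend $b$ to $\tilde b$ supported in $U\cap V$ with $\bar\partial\tilde b$ vanishing to infinite order on $\Gamma$.

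\textbf{Step 2: the approximate section.} Let $\chi$ be a cutoff equal to $1$ near $\Gamma$ and supported in $U$, and set $\sigma_m=\chi\tilde b\,\tilde e^m$. Then
\[
\bar\partial\sigma_m=(\bar\partial\chi)\tilde b\,\tilde e^m+\chi\,\tilde e^m\bigl(\bar\partial\tilde b+m\tilde b\,\tilde e^{-1}\bar\partial\tilde e\bigr).
\]
The first term is $O(e^{-cm})$ in every $C^k$ norm. In the second term the bracket is $O(d^N)$ for every $N$, while $|\tilde e^m|\le e^{-cm d^2}$. Since $\sup_d d^N e^{-cmd^2}\sim m^{-N/2}$, the weighted $L^2$ norm, and indeed any $C^k$ norm, of $\bar\partial\sigma_m$ is $O(m^{-N})$ for every $N$.

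\textbf{Step 3: H\"ormander correction.} By H\"ormander's estimate on the positive bundle $\bL^{\otimes m}\otimes K_X^{-1}$, identified with $\bL^{\otimes m}$ using $\Omega$, there is a solution $\bar\partial u_m=\bar\partial\sigma_m$ with
\[
\|u_m\|_{L^2}\le C m^{-1/2}\|\bar\partial\sigma_m\|_{L^2}=O(m^{-N}).
\]
Set $s=\sigma_m-u_m\in\H_m$. The remaining task is to upgrade this $L^2$ bound to $C^r$ bounds.

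\textbf{Step 4: the main obstacle, $C^r$ control of $u_m$.} We expect this step to be the hardest. The section $u_m$ satisfies $\bar\partial u_m=f_m$ with $f_m$ small in every $C^k$ norm. Rescaling a ball of radius $m^{-1/2}$ to unit size makes the bundle nearly flat, and interior elliptic estimates then give
\[
\|u_m\|_{C^r}\le C m^{a(r)}\bigl(\|u_m\|_{L^2}+\|f_m\|_{C^{r+1}}\bigr),
\]
where $a(r)$ is polynomial in $m$. Choosing $N>a(r)$ makes the right-hand side tend to $0$.

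It remains to compare $s$ with the targets, using that $s=\sigma_m-u_m$ and $u_m$ is small:
\begin{itemize}
\item On $\Gamma$, $\chi=1$ and $\tilde e=e$, so $\sigma_m|_\Gamma=be^m$ and $\|s|_\Gamma-be^m\|_{C^r}\to0$.
\item On $X\setminus V$, $\sigma_m=0$ because $\supp\tilde b\subset V$, so $\|s|_{X\setminus V}\|_{C^r}\to0$.
\end{itemize}
All $C^r$ norms are taken with respect to $h$ and $D$. Derivatives of $\tilde e^m$ pick up factors $m^{k}$, but these are harmless against the $m^{-N}$ decay. This completes the proof.
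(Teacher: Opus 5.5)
Your proposal is correct and follows essentially the paper's route. Both build an almost-holomorphic extension $\tilde e$ of $e$ whose norm has a nondegenerate maximum along $\Gamma$ (Lemma~\ref{lm:BSextpara}), cut off $\tilde b\tilde e^m$, correct it by H\"ormander's $L^2$ estimate, and upgrade to $C^r$ with elliptic constants growing only polynomially in $m$ (Lemmas~\ref{lm:fundamental} and~\ref{lm:Cr}). The differences are technical and harmless: you use infinite-order (Borel) extensions instead of a finite order $N$ tuned to $r$; a fixed cutoff, justified by $|\tilde e|^m\le e^{-cm\rho^2}$, instead of Tian's shrinking cutoff $\eta(\rho^2/4R_m^2)$; $\Omega$ to trivialize $K_X$ instead of a Ricci lower bound; and rescaled interior estimates instead of fixed-chart ones (the rescaled bundle has unit-size curvature rather than being nearly flat, but bounded geometry is all you actually use).
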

Theorem~\ref{tm:approximation1} is a special case of Theorem~\ref{tm:approximation2}.
Versions of this theorem appear in the previous works~\cite[Proposition~2.4]{Char03}, \cite[Proposition~19]{Ve20}, and~\cite[Proposition~2]{Ti25}.

\subsection{Context}
There is also the Hofer metric on the exact isotopy class $\OO$, defined by taking the infimum of the Hofer lengths of Hamiltonian flows taking one Lagrangian to another. Recall that Hofer~\cite{Ho90} introduced a bi-invariant metric on the group of Hamiltonian diffeomorphisms $\ham(X,\omega)$ by defining the length of the Hamiltonian flow $\varphi_t$ associated with a time-dependent Hamiltonian $\{H_t\}_{t \in [0,1]}$ to be
\[
\ell(\varphi_t) =   \int _0^1\left(\sup_X H_t-\inf_X H_t\right)dt.
\]
This is the Finsler metric on $\ham(X,\omega)$ associated to the norm on $C^\infty(X)/\R\cong T_{\id}\ham(X,\omega)$ induced by (twice) the $L^\infty$ norm on $C^\infty(X)$. The Hofer metric on $\ham(X,\omega)$ was shown to be non-degenerate for arbitrary $X$ by Lalonde-McDuff~\cite{LaMc90}. The Hofer metric on $\OO$ is invariant under the action of $\ham(X,\omega)$ on $\OO$, and was shown to be nondegenerate by Chekanov~\cite{Chek00}. There are also $L^p$-analogs of these norms, shown to be degenerate on $\ham(X,\omega)$ by Eliashberg and Polterovich~\cite{ElPo93}, and on $\OO$ by Chekanov~\cite{Chek00}.

The metric considered here arises from an $L^2$ norm, so one might expect it to be degenerate by analogy to the $L^p$ analogs of the Hofer norm. Yet it is non-degenerate, at least when restricted to $\OO^\epsilon$. For this to be possible, it is important that the metric considered here is not $\ham(X,\omega)$-invariant.
Rather, it is analogous to the Mabuchi metric~\cite{Mab87} on the space of K\"ahler metrics in a fixed cohomology class of a compact manifold, which also arises from an $L^2$ norm. The Mabuchi metric was proven to be non-degenerate by Chen~\cite{Che00}. 

Geometric quantization allows the approximation infinite-dimensional objects by finite-dimensional ones in a number of settings. Tian~\cite{tian1990polarized} used a prequantization line bundle over a compact K\"ahler manifold $L \to M$ to show that any K\"ahler metric in $c_1(L)$ can be obtained as the limit of Bergman metrics, that is, rescaled pullbacks of Fubini-Study metrics by Kodaira embeddings $M\to \P(H^0(M,L^m)^\vee)$ for large $m$. Donaldson~\cite{Don01} showed that constant scalar-curvature K\"ahler metrics arise as the limits of balanced metrics, which are Bergman metrics satisfying an explicit finite dimensional condition. Arezzo-Tian~\cite{ArT03} and Phong-Sturm~\cite{PhS06} studied the geodesics of the infinite-dimensional space of K\"ahler potentials using the geodesics of the finite-dimensional spaces of Bergman metrics.

An analogous quantization program exists for the infinite-dimensional manifold of Hermitian metrics on a holomorphic vector bundle $E \to Y$ of rank $r$. Given a prequantization line bundle $L$, for large $m$ there is an embedding to the Grassmannian $f: Y\to G\left(r,H^0(E\otimes L^m)^\vee\right)$ such that $E = f^* U^\vee \otimes L^{-m}$ where $U\to G\left(r,H^0(E\otimes L^m)^\vee\right)$ is the universal bundle. Thus, one obtains a family of pullback metrics on $E.$ Again, every metric on $E$ is the limit of such rescaled pullback metrics. See Hashimoto-Keller~\cite{HaKe25}, based on Catlin~\cite{Cat99} and Wang~\cite{WaX02}. See also Catlin-D'Angelo~\cite{CatD99}, Wang~\cite{WaX05} and Thomas~\cite{Th06}.

There is a substantial literature on Lagrangian submanifolds and quantization. Borthwick–Paul–Uribe~\cite{BoPaUr95} studied states in $\H_m$ associated to Bohr-Sommerfeld Lagrangians equipped with half-forms. Ioos~\cite{Io22} studied states associated to Bohr-Sommerfeld Lagrangians equipped with non-vanishing top-forms. If one takes $\re\Omega|_\Gamma$ as the non-vanishing top-form, Ioos' state is the Riesz representatives of the quantization functional $\psi_{\Gamma,e}$.  Paoletti~\cite{Pao06} built an object similar to $\cP$ using half-weights.

Within the framework of homological mirror symmetry~\cite{Ko95}, there is a rough analogy between Lagrangian submanifolds of $X$ and holomorphic vector bundles on a mirror Calabi-Yau manifold $Y$. Thus, it would be interesting to understand to what extent the quantization construction of the present paper corresponds under mirror symmetry to the quantization of holomorphic vector bundles. To the extent such a correspondence holds, it appears that mirror symmetry reverses the direction of the quantization map. Indeed, the quantization of a holomorphic vector bundle $E$ gives a sequence of maps from the finite-dimensional space of linear metrics on $U\to G\left(r,H^0(E\otimes L^m)^\vee\right)$ to the infinite-dimensional space of Hermitian metrics on $E$. On the other hand, the quantization of positive Lagrangians gives a sequence of maps from the infinite-dimensional space $\cP$ to the finite-dimensional spaces~$\H_m^\vee$.

\subsection{Outline}
In Section 2 we show that $\cP$ is a smooth circle bundle over $\OO$ and identify its tangent space at $(\Gamma,e)$ with the smooth functions on $\Gamma$. Furthermore, we construct a flat connection on $\cP.$ Proposition~\ref{pr:der} gives an explicit formula for the derivative of the quantization map $\varphi_m$. This is used to prove the bound of Theorem~\ref{tm:bound}. 

In Section 3 we prove a generalized form of the approximation Theorem~\ref{tm:approximation1}. The proof mimics the proof of Lemma~1.2 of Tian~\cite{tian1990polarized} - use of H\"ormander $L^2$ estimates to construct peak sections at a point $x_0$. An important step in Tian's construction is a local holomorphic section $e$ around $x_0$, with the norm $|e|$ having a strong local maximum at $x_0$. This implies that the norm of $e^m$ decays fast in annuli around $x_0$ as $m\to\infty$, which in turn implies certain $L^2$-bounds. In our case such a holomorphic section does not exist, and we replace it by an almost-holomorphic extension of $e_0\in \cP_{\Gamma_0}$ with the norm having a strong local maximum at $\Gamma_0$ on each fiber of an appropriately chosen tubular neighborhood. Lemma~\ref{lm:BSextpara} asserts the existence of such an extension, using Lemma~3.1 of Nirenberg-Wells~\cite{NiW69}. The proof concludes with the use of elliptic regularity and Sobolev methods as in~\cite[Theorem~6.1]{NiW69}.

In Section 4 we prove the separation of fibers of Theorem~\ref{tm:separation} by a somewhat straight-forward application of Theorem~\ref{tm:approximation1}, and then the non-degeneracy of the main Theorem~\ref{tm:pm}. For the latter we use flatness of $\cP$ to lift any path from $\OO$ to $\cP$ without changing its length, and then pushing forward through the quantization map - which changes the length only up to a constant, by Theorem~\ref{tm:bound} - and relying on Theorem~\ref{tm:separation}.

\subsection{Acknowledgments}
The authors are grateful to S.~Donaldson, Y.~Rubinstein, G.~Tian and S. Zelditch for helpful conversations. ChatGPT Astra was used to proofread the manuscript, correct a number of errors and find additional references.
During the preparation of this manuscript, the authors were partially supported by BSF grant~2024293. The first author was partially supported by BSF Grants 2012236 and 2020329 and the Miriam and Julius Vinik Chair in Mathematics.

\section{Derivative of quantization map}\label{sec:der}
In this section we prove Theorem~\ref{tm:bound}. We start with a discussion of the smooth structure of $\cP$ and its tangent spaces. This allows the definition of a metric and a flat connection on $\cP$, which we will use in the proof of the main Theorem~\ref{tm:pm} in Section~\ref{sec:Pfs}. We further use the results on $\cP$ and $T\cP$ to give an explicit formula for $d\varphi_m$, and conclude with bounding it, hence proving Theorem~\ref{tm:bound}.

\subsection{The bundle \texorpdfstring{$\cP$}{P}}
\subsubsection{Background on \texorpdfstring{$\OO$}{O}}
We start by briefly explaining the smooth structure of $\OO$. See~\cite[Theorem~2.12]{SoY20a} for more details. It is comfortable to have an $n$-dimensional manifold $L$ to which all the Lagrangians in $\OO$ are diffeomorphic. For any $\Gamma\in \OO$ there is a Weinstein neighborhood $W$ in $X$. That means there is a neighborhood $V$ of the zero section in $T^*\Gamma$ and a symplectomorphism $\eta:V\to W$. We identify $W=V\subseteq T^*X$ using $\eta$, and we may assume that each fiber $W_x=T^*_x\Gamma\cap W$ is a ball around $0$. Then $W$ induces a neighborhood $\Gamma\in \W\subseteq \OO$ in the $C^\infty$ topology, in which every Lagrangian is the image in $V $ of an exact 1-form on $\Gamma$. This gives coordinates for $\W$ modeled on the Fr\'echet space
\[
\T{\Gamma}=\left\{h\in C^\infty(\Gamma):\int h\re\Omega=0\right\},
\]
by identifying the image of an exact 1-form $\alpha$ with the unique $k\in\T{\Gamma}$ for which $dk=\alpha$, and we write $\Gamma_k:=\im dk$.
Thus, for a path $\Gamma_t=\Gamma_{k_t}=\im dk_t,$ we have $\dot\Gamma_0=\dot k_0\in \T{\Gamma}$.

The derivative of a path in $\OO$ can be expressed more invariantly as explained in~\cite{AS01}. Let $i_t:L\to I\times L$ be the map $i_t(x)=(t,x)$. It can then be shown that a path $\left(\Gamma_t\right)_{t\in I}$ in $  \OO$ is smooth if and only if there is a smooth map $f:I\times L\to X$ such that for each $t$, $f_t=f\circ i_t:L\to X$ is a representative of $\Gamma_t$, that is, an embedding with $\im f_t=\Gamma_t$. Any other representative will be of the form $f_t\circ\varphi_t$ for $\varphi_t\in\Diff(L) $.

\begin{lm}[{\cite[Lemma~2.1]{AS01}}]\label{lm:Otanspce}
    There is a canonical identification 
    \[
    T_{\Gamma}\OO \cong \T{\Gamma}
    \]
    defined as follows: given a path $\left(\Gamma_t\right)_{t}$ in $\OO$ with a representative $f:I\times L\to X$, we have a vector field $\dot f_t$ along $f_t$, which can be pushed forward to a vector field $v_t=(f_t)_*\dot f_t$ along the inclusion $\Gamma_t\subset X$. Define $\alpha_t=\omega(v_t,\cdot)\in \Omega^1(\Gamma_t) $. Then $\alpha_t$ is exact, and we identify $\dot\Gamma_t$ with the unique primitive $h_t\in\T{\Gamma_t}$.
\end{lm}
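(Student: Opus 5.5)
We need to show that $\alpha_t$ is exact on $\Gamma_t$, and that the resulting class $h_t\in\T{\Gamma_t}$ depends only on the path $\Gamma_t$ and not on the representative $f$. Along the way we check that the assignment $\dot\Gamma_t\mapsto h_t$ is a linear isomorphism compatible with the Weinstein charts. The plan is to pull everything back to $L$ and use the Lagrangian condition together with Cartan's formula.

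\emph{Exactness.} First, $f_t^*\alpha_t = \iota_{\dot f_t}(f^*\omega)|_{\{t\}\times L}$; more precisely, this is the contraction of $f^*\omega$ on $I\times L$ with $\partial_t$, restricted to $\{t\}\times L$. Write $f^*\omega = \beta_t + dt\wedge\gamma_t$, where $\beta_t=f_t^*\omega=0$ because each $\Gamma_t$ is Lagrangian. Then closedness of $f^*\omega$ gives $d_L\gamma_t = \partial_t\beta_t=0$, so $\alpha_t$ is closed. For exactness, recall that membership in $\OO$ means each $\Gamma_t$ is reached by a Hamiltonian isotopy. The flux homomorphism then shows that $[f_t^*\alpha_t]\in H^1(L;\R)$ is the $t$-derivative of the flux class. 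This equals the derivative of $[\lambda|_{\Gamma_t}]$ in a Weinstein chart $T^*\Gamma$, or more intrinsically the derivative of the relative class $[\omega]\in H^2(X,\Gamma_t)$ transported along the path, and it vanishes for exact isotopies.

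Concretely, I would argue in the Weinstein chart. There $\Gamma_t=\im dk_t$, and a representative is $f_t(x)=(x,dk_t(x))$, whose velocity is the vertical vector $d\dot k_t$. Since the canonical form satisfies $\omega(\text{vertical }\xi,\text{horizontal }v)=\pm\xi(v)$, we get $\alpha_t = \pm d\dot k_t$ pulled back, which is exact with primitive $\dot k_t$ up to a constant. Normalizing the constant by $\int h\re\Omega=0$ gives uniqueness.

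\emph{Independence of representative.} Another representative has the form $f_t\circ\varphi_t$. Its velocity differs from $\dot f_t$ by $(f_t)_*\dot\varphi_t$, which is tangent to $\Gamma_t$. Because $\Gamma_t$ is Lagrangian, $\omega(v,\cdot)|_{\Gamma_t}=0$ for tangent $v$, so $\alpha_t$ is unchanged. Hence $\alpha_t$ is intrinsic: it is determined by the normal component of the velocity, and $\omega$ identifies the normal bundle with $T^*\Gamma_t$.

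\emph{Isomorphism.} Linearity follows because $\alpha_t$ is linear in the normal velocity. The chart computation above shows that, in the coordinates $\Gamma_k$ modeled on $\T{\Gamma}$, the map sends $\dot k_0$ to $\pm\dot k_0$ (with a fixed sign convention). It is therefore the identity on the model space, hence a canonical isomorphism. The main point needing care is the normalization at points $\Gamma_t\ne\Gamma$. In the chart centered at $\Gamma$, the constant in $\dot k_t$ is normalized using $\re\Omega|_\Gamma$ rather than $\re\Omega|_{\Gamma_t}$. I would therefore define the canonical map with the intrinsic normalization on $\Gamma_t$, and then note that the two normalizations differ by subtracting a constant. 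This is again a smooth linear isomorphism, so smoothness and the identification of $T_\Gamma\OO$ both hold.
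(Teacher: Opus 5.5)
Your proposal is correct and follows the argument the paper relies on. The paper itself only cites \cite[Lemma~2.1]{AS01}, but its own computations in \eqref{eq:TGammat} and in the proof of Lemma~\ref{lm:tanspce} use exactly your ingredients: the Weinstein-chart representative $f_t=dk_t\circ f_0$, whose velocity is vertical, so that, since the fibers are Lagrangian, $\omega(v_t,\cdot)$ sees only $d\pi$ of the tangent vector and pulls back to $d\dot k_t$; and the observation that a reparametrization $f_t\circ\varphi_t$ only adds velocity tangent to $\Gamma_t$, which $\omega$ annihilates.

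You can drop the flux paragraph. As written it presupposes that the path is an exact isotopy, whereas the actual input is that smooth paths in $\OO$ are, by the definition of its charts, locally of the form $\im dk_t$, which is precisely what your concrete chart computation uses.
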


\subsubsection{The smooth structure of \texorpdfstring{$\cP$}{P}}
To define a smooth structure on $\cP$ we view it as a bundle associated to a unitary local system obtained from $\BL$ by pullback to the universal Lagrangian bundle over $\OO,$ twisting the connection by a $1$-form, and pushing forward to $\OO.$

Define the universal $L$-bundle $\pi_\U:\U \to \OO$ by
\[
\U=\{(\Gamma,x)\in \OO\times X:x\in \Gamma\},\qquad \pi_\U(\Gamma,x)=\Gamma.
\]
A local trivialization for $\U$ over an open set $\W \subset \OO$ coming from a Weinstein neighborhood $W$ of $\Gamma,$ is given by the map $\W\times \Gamma\to \U|_\W$ defined by $(\Gamma_k,x)\mapsto (\Gamma_k,dk_x)$.
The proof of~\cite[Theorem 2.12]{SoY20a} can be adapted to show compatibility of these trivializations, so they induce a smooth structure on $\U$. Let $p:\U\to X$ be the map $p(\Gamma,x)=x$. 

Let $\chi$ be the canonical $1$-form on $\U$ given as follows. For $(\Gamma,x) \in \U$ and $v \in T_{\Gamma,x}\U$ we identify $d\pi_\U(v)$ with a function on $\Gamma$ by Lemma~\ref{lm:Otanspce}. Then
\begin{equation}\label{eq:chi}
\chi_{\Gamma,x}(v) = 2 \pi \ii d\pi_\U(v)(x).
\end{equation}
The pullback bundle $p^*\BL$ comes with a pullback connection $p^*D,$ but this connection is not flat. To remedy the situation, we define the twisted connection $D_\chi=p^*D-\chi.$

Let $W$ be a Weinstein neighborhood of $\Gamma$ as above. For each $\Gamma_k\in\W$ we have a diffeomorphism $f_k:\Gamma\to \Gamma_k$ by $f_k(x)=dk_x$. since $\W$ is a coordinate neighborhood with $\T{\Gamma}$ as a model space, there is an isomorphism $T_{\Gamma_k}\W\cong \T{\Gamma} $. Composing with the isomorphism $T_{\Gamma_k}\OO\cong \T{\Gamma_k}$ of Lemma~\ref{lm:Otanspce}, we get an isomorphism $\T{\Gamma}\to \T{\Gamma_k}$, which is given by
\[
h\mapsto h-\int_\Gamma hf_k^*\re\Omega.
\]
This motivates the definition of a 1-form $\zeta$ on $\W$ by setting
\begin{equation}\label{eq:zeta}
    \zeta_{\Gamma_k}(h)=\int_\Gamma hf_k^*\re\Omega,\qquad h\in \T{\Gamma}\cong T_{\Gamma_k}\W.
\end{equation}
So, for a path $\Gamma_t=\Gamma_{k_t}$ we have
\begin{equation}\label{eq:TGammat}
    \dot\Gamma_t=\dot k_t-\zeta_{\Gamma_t}\left(\dot k_t\right)\in \T{\Gamma_t}.
\end{equation}
\begin{lm}\label{lm:zeta}
    The 1-form $\zeta$ is closed. Hence, shrinking $\W$ if necessary, there is a function $F:\W\to \R$ such that $dF=-\zeta$.
\end{lm}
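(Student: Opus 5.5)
We need to show $d\zeta=0$ on $\W$. Since $\W$ is modeled on the Fréchet space $\T{\Gamma}$, the plan is to compute $d\zeta$ in the chart via the coordinate formula
\[
d\zeta_{\Gamma_k}(h_1,h_2)=\frac{d}{ds}\Big|_{s=0}\zeta_{\Gamma_{k+sh_1}}(h_2)-\frac{d}{ds}\Big|_{s=0}\zeta_{\Gamma_{k+sh_2}}(h_1),
\]
valid because constant vector fields $h_1,h_2$ in the chart commute. Once the symmetry of the derivative of $\zeta$ is established, $F$ is obtained by the Poincaré lemma on a convex (star-shaped) neighborhood of $0$ in the model space. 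Concretely, set $F(\Gamma_k)=-\int_0^1\zeta_{\Gamma_{tk}}(k)\,dt$ after shrinking $\W$ to be star-shaped in $\T{\Gamma}$. The standard Poincaré-lemma computation goes through in Fréchet spaces, since it only uses differentiation under the integral and closedness.

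The key computation is the derivative $\frac{d}{ds}\big|_{s=0}\int_\Gamma h_2\,f_{k+sh_1}^*\re\Omega$. Here $f_k(x)=dk_x$ is the graph map into $W\subseteq T^*\Gamma$, so the family $f_{k+sh_1}$ has variation vector field $V_1$ along $f_k$, namely the vertical vector $dh_1$ at $dk_x$. By the Cartan formula, and since $\Omega$ is closed,
\[
\frac{d}{ds}\Big|_{s=0}f_{k+sh_1}^*\re\Omega=d\bigl(f_k^*\iota_{V_1}\re\Omega\bigr).
\]
Integrating by parts on the closed manifold $\Gamma$ then gives
\[
\frac{d}{ds}\Big|_{s=0}\zeta_{\Gamma_{k+sh_1}}(h_2)=-\int_\Gamma dh_2\wedge f_k^*\iota_{V_1}\re\Omega.
\]

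It remains to show that the bilinear form $B(h_1,h_2)=\int_\Gamma dh_2\wedge f_k^*\iota_{V_1}\re\Omega$ is symmetric. I will transport everything to $\Gamma_k$. Let $v_i=(f_k)_*$ of the variation field, so that by Lemma~\ref{lm:Otanspce} $\omega(v_i,\cdot)|_{\Gamma_k}=d\tilde h_i$, where $\tilde h_i=h_i\circ f_k^{-1}$ up to constants. This holds because the variation is vertical with value $dh_i$, and the canonical symplectic form pairs vertical vectors with tangent vectors of the graph to give exactly $dh_i$ pulled back along the projection. Thus $d\tilde h_2=\iota_{v_2}\omega|_{\Gamma_k}$, and
\[
B(h_1,h_2)=\int_{\Gamma_k}(\iota_{v_2}\omega)\wedge\iota_{v_1}\re\Omega.
\]
The main obstacle is the pointwise identity that $(\iota_{v_2}\omega)\wedge\iota_{v_1}\re\Omega$ restricted to the Lagrangian $\Gamma_k$ is symmetric in $v_1,v_2$. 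I would verify it at a point using a unitary frame adapted to $T\Gamma_k$, where $v_i=J u_i$ modulo $T\Gamma_k$ with $u_i\in T\Gamma_k$. Tangential components contribute nothing to $\iota_{v}\omega|_{\Gamma_k}$, but they do contribute to $\iota_v\re\Omega$, so I expect to have to handle them. Since $v_i$ is determined only modulo tangent vectors, which corresponds to reparametrization, I would choose the normal representatives $v_i=Ju_i$. Writing $\Omega|_{\Gamma_k}=e^{i\theta}\rho\,\mathrm{vol}$, one has $\iota_{Ju}\Omega=i\,\iota_u\Omega$ by complex linearity in the $(n,0)$ form, and hence $\iota_{Ju}\re\Omega|_{\Gamma_k}=-\iota_u\im\Omega|_{\Gamma_k}=-\rho\sin\theta\,\iota_u\mathrm{vol}$. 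Also $\iota_{Ju_2}\omega|_{\Gamma_k}=g(u_2,\cdot)^\flat$. Then
\[
g(u_2,\cdot)\wedge\iota_{u_1}\mathrm{vol}=g(u_1,u_2)\,\mathrm{vol},
\]
which is symmetric, so $B$ is symmetric and $d\zeta=0$. The fact that the variation field in the chart is vertical rather than normal is exactly the reparametrization freedom, and changing the representative by tangential vectors leaves the integral invariant by the Cartan/Stokes argument above.
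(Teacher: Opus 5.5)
Your setup matches the paper's: constant vector fields, Cartan's formula, integration by parts, reduction to the symmetry of $B$, and the Poincar\'e lemma for $F$. Your pointwise computation for the normal fields $Ju_1,Ju_2$ is also correct.

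\textbf{The gap is the final step.} It is not true that replacing the vertical variation field $V_1=Ju_1+w_1$ (with $w_1$ tangent to $\Gamma_k$) by $Ju_1$ leaves $B$ unchanged. The discarded term is
\[
\int_{\Gamma_k} d\tilde h_2\wedge\iota_{w_1}\re\Omega=\int_{\Gamma_k}(w_1\tilde h_2)\,\re\Omega .
\]
Stokes does not kill it. The exact form $d(\iota_{w_1}\re\Omega|_{\Gamma_k})$ is integrated against $\tilde h_2$, not against a constant.

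This term is generally nonzero. Take $\Gamma=\R^n/\mathbb{Z}^n\subset\C^n/\mathbb{Z}^n$, $\Omega=dz_1\wedge\cdots\wedge dz_n$, and the Weinstein chart $(x,p)\mapsto(x+Ap,p)$ with $A$ a constant symmetric matrix. At $k=0$ the normal part contributes nothing, since $\im\Omega|_\Gamma=0$. Yet one computes $h_1\zeta(h_2)=-\int\langle A\nabla h_1,\nabla h_2\rangle\,dx$, which comes entirely from the tangential part. You did notice that tangential components contribute to $\iota_v\re\Omega$, but then dismissed them incorrectly.

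\textbf{The missing ingredient} is the one fact your argument never uses: the fibers of $T^*\Gamma$ are Lagrangian, so $\omega(V_1,V_2)=0$.
\begin{itemize}
\item Since $\Gamma_k$ is Lagrangian, $\omega(Ju_1,Ju_2)=\omega(w_1,w_2)=0$. Hence $\omega(V_1,V_2)=0$ is equivalent to $\omega(V_1,w_2)=\omega(V_2,w_1)$.
\item Since $w_1\tilde h_2=\pm\,\omega(V_2,w_1)$, the discarded term is symmetric in $h_1,h_2$. Once you add this, your proof closes. In the example above, this symmetry is exactly the symmetry of $A$.
\end{itemize}

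The paper avoids the normal/tangential splitting altogether. It contracts the type identity $\omega\wedge\re\Omega=0$ with $V_1$ and $V_2$ and restricts to $\Gamma_k$. The terms $\omega(V_2,V_1)\re\Omega$ and $\omega|_{\Gamma_k}\wedge\iota_{V_1}\iota_{V_2}\re\Omega$ vanish, which leaves directly $(\iota_{V_2}\omega)\wedge\iota_{V_1}\re\Omega=(\iota_{V_1}\omega)\wedge\iota_{V_2}\re\Omega$ on $\Gamma_k$.
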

\begin{proof}
    Let $h,l\in \T{\Gamma}$, thought of as constant vector fields on $\W$, so $[h,l]=0$. Then
    \[
    d\zeta(h,l)=h\zeta(l)-l\zeta(h).
    \]
Let $\Gamma_k\in \W$. Then we have
    \[
    h\zeta(l)(\Gamma_k)=\frac{d}{dt}\Big|_{t=0}\zeta_{\Gamma_{k+th}}(l)=\int_\Gamma l\frac{d}{dt}\Big|_{t=0} f_{k+th}^*\re\Omega.
    \]
    Define $v_k=\frac{d}{dt}\big|_{t=0} f_{k+th},$ a vector field along $f_k$. So, $i_{v_h}\omega=dh$. By the Cartan formula,
    \[
    \frac{d}{dt}\Big|_{t=0} f_{k+th}^*\re\Omega=i_{v_h}d\re\Omega+di_{v_h}\re\Omega=di_{v_h}\re\Omega.
    \]
    Hence, we have
    \begin{align*}
        d\zeta(h,l)(\Gamma_k)&=\int_\Gamma\left(ldi_{v_h}\re\Omega-hdi_{v_l}\re\Omega\right)\\
        &=\int_\Gamma \left(dh\wedge i_{v_l}\re\Omega-dl\wedge i_{v_h}\re\Omega\right).
    \end{align*}
    
    To see why this expression vanishes, start with $\omega\wedge\re\Omega=0,$ which holds because $\Omega$ is an $(n,0)$-form and $\omega$ is of type $(1,1)$. Since $v_h,v_l$ are tangent to the fibers of $T^*\Gamma$, which are Lagrangian, we have $i_{v_l}i_{v_h}\omega=0$. Hence,
    \begin{align*}
        0&=i_{v_l}i_{v_h}(\omega\wedge\re\Omega) \\
        &=dh\wedge i_{v_l}\re\Omega-dl\wedge i_{v_h}\re\Omega+\omega|_\Gamma \wedge i_{v_l}i_{v_h}\re\Omega\\
        &=
    dh\wedge i_{v_l}\re\Omega-dl\wedge i_{v_h}\re\Omega.
    \end{align*}
\end{proof}

The restriction of $(\mathbb L,D)$ to each fiber $W_x$ has trivial holonomy since the fiber is Lagrangian and simply-connected. Hence, given a unitary section $e$ of $\BL|_\Gamma$, we can extend it to a unique section $\p$ of $\BL|_W$ such that $\p^x:=\p|_{W_x} $ is parallel. This allows the following construction, where $\pi:T^*\Gamma\to \Gamma$ is the projection.

\begin{df}\label{def:section}
    Given a unitary and parallel section $e$ of $\BL|_\Gamma$, extend it to a unitary section $\overline e$ of $p^*\BL$ over $\U|_\W$ as follows: Let $F$ be as in Lemma~\ref{lm:zeta}. For any $\Gamma_k=\im dk\in \W $ let $K=k\circ \pi\in C^\infty(T^*\Gamma)$, and define
    \[
    \overline e|_{\U_{\Gamma_k}}=\exp\left({2\pi \i (K+F\circ \pi_\U)}\right)\p|_{\Gamma_k}\in C^\infty(\Gamma_k,\BL).
    \]
\end{df}

\begin{lm}\label{lm:parallel}
    The section $\overline e$ is parallel with respect to $D_\chi$.
\end{lm}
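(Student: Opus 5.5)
We use the local trivialization $\W\times\Gamma\cong\U|_\W$, $(\Gamma_k,x)\mapsto(\Gamma_k,dk_x)$. Under it, a tangent vector to $\U$ at $(\Gamma_k,dk_x)$ splits into a vertical part $u\in T_x\Gamma$ and a horizontal part $h\in\T{\Gamma}\cong T_{\Gamma_k}\W$. It suffices to check $D_\chi\overline e=0$ on each part separately.

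\emph{Vertical directions.} For $u$ tangent to the fiber $\U_{\Gamma_k}=\Gamma_k$ we have $\chi(u)=0$, because $d\pi_\U(u)=0$. So we must show that $\exp(2\pi\i K)\p|_{\Gamma_k}$ is $D$-parallel along $\Gamma_k$. Since $\p$ is parallel on each fiber $W_x$, its covariant derivative $D\p$ vanishes on vertical vectors of $T^*\Gamma$. On the zero section, $D\p=De=0$. The curvature is $-2\pi\i\omega$, and $\omega$ is the canonical form $d\lambda$ on $T^*\Gamma$. From these facts we will show that
\[
D\p=-2\pi\i\lambda\otimes\p.
\]
The argument is as follows. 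Write $D\p=a\otimes\p$ with $a$ a $1$-form, which is imaginary-valued by unitarity. Then $da=-2\pi\i\omega$, $a$ vanishes on vertical vectors, and $a$ vanishes on the zero section. The form $a+2\pi\i\lambda$ is closed, vanishes on vertical vectors and on the zero section, and is therefore zero: along each fiber it is constant in the fiber directions by closedness. We will make this rigorous with the homotopy/Cartan formula along fiber rescaling. Restricting to $\Gamma_k=\im dk$, the tautological form pulls back as $(dk)^*\lambda=dk$. Hence
\[
D\bigl(e^{2\pi\i K}\p\bigr)=2\pi\i\,(dK-\lambda)\,e^{2\pi\i K}\p,
\]
and $dK-\lambda$ vanishes on $\Gamma_k$. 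This settles the vertical directions.

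\emph{Horizontal directions.} Take the path $t\mapsto(\Gamma_{k+th},d(k+th)_x)$ with $x$ fixed. Its image in $X$ is the vertical curve $c(t)=d(k+th)_x$ in $W_x$, with velocity $v_h=dh_x$ (the fiber vector). Along this curve $\p$ is parallel, so
\[
p^*D\,\overline e(\partial_t)=2\pi\i\left(\tfrac{d}{dt}(k+th)(x)+dF(h)\right)\overline e=2\pi\i\bigl(h(x)-\zeta(h)\bigr)\overline e,
\]
using Lemma~\ref{lm:zeta}. By \eqref{eq:TGammat}, $d\pi_\U(\partial_t)=h-\zeta_{\Gamma_k}(h)\in\T{\Gamma_k}$. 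Evaluating this at the point $dk_x$, which corresponds to $x$ under $f_k$, gives
\[
\chi(\partial_t)=2\pi\i\bigl(h(x)-\zeta(h)\bigr)
\]
by \eqref{eq:chi}. Subtracting, $D_\chi\overline e(\partial_t)=0$.

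One point needs care: the identification of Lemma~\ref{lm:Otanspce} should yield exactly $h\circ f_k^{-1}$ up to the constant $\zeta$. This holds because $i_{v}\omega=dh$ pulled back to $\Gamma_k$. The main obstacle is the identity $D\p=-2\pi\i\lambda\otimes\p$, including sign conventions relating $\omega$ to $d\lambda$ and the sign in $dk=\alpha$. We will fix those conventions first so that the vertical computation and the horizontal computation use consistent signs.
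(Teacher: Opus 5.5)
Your proposal is correct. For the directions transversal to the fibers of $\U\to\OO$ you do what the paper does: differentiate along $t\mapsto(\Gamma_{k+th},d(k+th)_x)$, use that $\tilde e$ is parallel along $W_x$, and cancel $2\pi\i(h(x)-\zeta(h))$ against $\chi$ using \eqref{eq:TGammat}.

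The fiber directions are where you take a genuinely different route. The paper works with parallel transport. For a path $\gamma\subset\Gamma_k$ from $dk_x$ to $dk_y$, it closes $\gamma$ into a loop $\ell$ using two fiber segments and a path $\delta$ in the zero section. It then applies the holonomy formula $\hol_\ell=\exp(2\pi\i\int_\Sigma\omega)$ to the surface $\Sigma(t,s)=(dk_t)_{\delta(s)}$ and evaluates $\int_\ell\lambda=k(y)-k(x)$ by Stokes. You instead compute the connection form of $\tilde e$ on all of $W$, namely $D\tilde e=-2\pi\i\lambda\otimes\tilde e$, which is the infinitesimal version of the same identity. Parallelism along every $\Gamma_k$ then follows from $(dk)^*\lambda=dk=(dk)^*dK$, with no choice of paths or surfaces, and you get a formula valid on the whole Weinstein neighborhood rather than only transport along the chosen paths.

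Your closed-form step is easier to make rigorous than you suggest. Since $\beta=a+2\pi\i\lambda$ kills vertical vectors, in fiber coordinates $\beta=\sum_j b_j\,dq_j$. Closedness forces $\partial_{p_l}b_j=0$, and then $b_j(q,0)=0$ together with connectedness of the fibers $W_x$ (they are balls) gives $\beta=0$.

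The sign conventions you flag are already fixed by the paper. The identity $i_{v_h}\omega=dh$ used in the proof of Lemma~\ref{lm:zeta} is equivalent to the identification $\dot\Gamma=\dot k$. With $\lambda=p\,dq$, it forces $\omega=d\lambda$. That gives exactly $a=-2\pi\i\lambda$, consistent with the factor $e^{+2\pi\i K}$ in Definition~\ref{def:section} and with the holonomy formula in the paper's own argument.
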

\begin{proof}
    Denote $\overline e_{\Gamma_k}=\overline e|_{\U_{\Gamma_k}}$. We will first prove that each $\overline e_{\Gamma_k}$ is parallel. Note that $\chi|_{\U_{\Gamma_k}}=0$, so we work with the original connection $D$. Denote also $k_t=tk$ for each $t\in I=[0,1]$. 

    We need to show that for any path $\gamma:I\to \Gamma_k$ we have $\frac{D}{ds}\overline e_{\Gamma_k}(\gamma(s))=0 $, or equivalently $\overline e_{\Gamma_k}(v)=P^\gamma_{0,1}\overline e_{\Gamma_k}(u) $, where $P^\gamma_{0,1}$ is the parallel transport along $\gamma$ from $u=\gamma(0)$ to $v=\gamma(1)$. Denote $x=\pi(u),y=\pi(v)$, so that $u=dk_x,v=dk_y$.   

    Define the path $\delta=\pi\circ\gamma:I\to \Gamma$ so that $\gamma=dk\circ \delta$. Let $\beta(s)=d(k_{s})_y,\alpha(s)=d(k_{s})_x $ be paths from $y$ to $v$ and from $x$ to $u$ as shown in Figure~\ref{fig:loop}. Let also $\overline \delta(s)=\delta(1-s)$ and the same for $\beta$. Define the loop $\ell =\gamma*\alpha*\overline\delta*\overline\beta$. Note that $\ell$ encloses the surface $\Sigma:I^2\to V$ given by $\Sigma(t,s)=(dk_t)_{\delta(s)} $.

    \begin{figure}[bhtp]
  \centering
  \begin{tikzpicture}[
      >=stealth,
      decoration={
          markings,
          mark=at position 0.55 with {\arrow{>}}
      }
  ]
  
  \draw[thick] (-1, 0) -- (6, 0) node[right] {$\Gamma$};
  \draw[thick] (0, -1) -- (0, 4) node[above] {$T^*\Gamma$};

  \coordinate (x) at (1.5, 0);
  \coordinate (y) at (4.5, 0);
  \coordinate (u) at (1.5, 2.5);
  \coordinate (v) at (4.5, 1.8);

  \fill[gray!10] (x) -- (u) to[out=-20, in=160] (v) -- (y) -- cycle;
  \node at (3, 1.1) {\Large $\Sigma$};

  \draw[thick, postaction={decorate}] (x) -- (u) node[midway, left] {$\alpha$};
  \draw[thick, postaction={decorate}] (u) to[out=-20, in=160] node[midway, above] {$\gamma$} (v);
  \draw[thick, postaction={decorate}] (v) -- (y) node[midway, right] {$\overline\beta$};
  \draw[thick, postaction={decorate}] (y) -- (x) node[midway, below] {$\overline{\delta}$};

  \draw[thick, dashed] (0.2, 2.9) to[out=-20, in=160] (u);
  \draw[thick, dashed] (v) to[out=-20, in=160] node[above right] {$dk$} (5.8, 1.6);

  \filldraw (x) circle (2pt) node[below] {$x$};
  \filldraw (y) circle (2pt) node[below] {$y$};
  \filldraw (u) circle (2pt) node[above] {$u$};
  \filldraw (v) circle (2pt) node[above] {$v$};

  \end{tikzpicture}
  \caption{The loop $\ell$ bounding the surface $\Sigma$.}
  \label{fig:loop}
\end{figure}

    Let $\lambda$ be the canonical 1-form on $T^*\Gamma$. Using the facts that $\p|_\Gamma=e$ is parallel and $\p^x,\p^y $ are parallel extensions of $e(x),e(y)$ together with the Ambrose-Singer formula, we get the following:
    \begin{align*}
    \left(\p(v)\right)^{-1}P^\gamma_{0,1}\p(u)&=\left(\p(v)\right)^{-1}P^\ell_{0,1}\p(v)=\hol_\ell\\
    &=\exp\left(2\pi\i\int_\Sigma \omega\right)=\exp\left(2\pi\i\int_\ell\lambda\right).
    \end{align*}
    Recall that the pullbacks of $\lambda$ to the zero section and the fibers of $T^*\Gamma$ vanish. Moreover, $\mu^*\lambda=\mu$ for each 1-form $\mu$ on $\Gamma$. Hence,
    \begin{align*}
        \int_\ell\lambda&=\int_\gamma\lambda=\int _\delta (dk)^*\lambda=\int _\delta dk\\
        &=k(\delta(1))-k(\delta(0))=k(y)-k(x),
    \end{align*}
    so
    \[
    P^\gamma_{0,1}\p(u)=\exp\left(2\pi\i\left(k(y)-k(x)\right)\right)\p(v),
    \]
    and finally
    \begin{align*}
        P^\gamma_{0,1}\overline e_{\Gamma_k}(u)
        &=P^\gamma_{0,1}\left(\exp\left(2\pi\i F(\Gamma_k)+2\pi\i k(x)\right)\p(u)\right)\\
        &=\exp\left(2\pi\i F(\Gamma_k)+2\pi\i k(y)\right)\p(v)=\overline e_{\Gamma_k}(v).
    \end{align*}
    
It remains to show that $\overline e$ is parallel along paths transversal to the fibers of $\U \to \OO.$ Recall that we have a trivialization $\W\times\Gamma\cong \U|_\W$ by $(\Gamma_{k},x)\mapsto (\Gamma_k,dk_x)$. Consider a path $\Gamma_{k_t} \in \W$ with $k_0 = k.$  The covariant derivative of $\overline e$ in the direction of the path $\gamma(t) = (\Gamma_{k_t},d(k_t)_x)$ transversal to $\U_{\Gamma_k}$ is given by
    \begin{align*}
        \frac{D_\chi}{dt}&\Big|_{t=0}\overline e(\Gamma_{k_t},d(k_t)_x)\\
        &=\left(\frac{D}{dt}-\chi\right)\Big|_{t=0}\exp\left(2\pi\i (k_t(x)+F(\Gamma_{k_t}))\right)\p(d(k_t)_x)\\
        &=\left(2\pi\i\left(\dot k_0(x)+dF\left(\dot k_0\right)\right) -\chi_{\Gamma_{k_0},d(k_0)_x}(\dot \gamma(0))\right)\overline e(\Gamma_{k_t},d(k_t)_x)\\
        &=\left(2\pi\i\left(\dot k_0-\zeta(\dot k_0)\right)-\chi_{\Gamma_{k_0},d(k_0)_x}(\dot \gamma(0))\right)\overline e(\Gamma_{k_t},d(k_t)_x)\\
        &\overset{\mathclap{\eqref{eq:TGammat}}}{=}\left(2\pi\i \dot\Gamma_{k_0}(d(k_0)_x)-\chi_{\Gamma_{k_0},d(k_0)_x}(\dot \gamma(0))\right)\overline e(\Gamma_{k_t},d(k_t)_x),
    \end{align*}
    which is zero by the definition of $\chi$.
\end{proof}

The existence of unitary and parallel local sections establishes the following:

\begin{cy}\label{cy:flat}
    The connection $D_\chi$ is flat and preserves the metric $p^*h$. As a consequence, the unit circle subbundle $S_{p^*\BL}\subset p^*\BL$ is flat.
\end{cy}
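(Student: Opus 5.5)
The plan is to read off flatness from the local parallel frames given by Lemma~\ref{lm:parallel}. Cover $\U$ by the open sets $\U|_\W$, where $\W$ runs over the Weinstein coordinate neighborhoods of Lagrangians $\Gamma\in\OO$; shrink each $\W$ as in Lemma~\ref{lm:zeta} so that $F$ exists. On $\U|_\W$, Definition~\ref{def:section} produces from a unitary parallel section $e$ of $\BL|_\Gamma$ a section $\overline e$ of $p^*\BL$ with $|\overline e|_{p^*h}=1$. Lemma~\ref{lm:parallel} says $D_\chi\overline e=0$. Since $p^*\BL$ has rank one, $\overline e$ is a nowhere vanishing frame on $\U|_\W$.

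For flatness, write the curvature of $D_\chi$ as a $2$-form $F_{D_\chi}$ acting on the rank-one bundle. For vector fields $u,w$ we have
\[
F_{D_\chi}(u,w)\,\overline e = D_{\chi,u}D_{\chi,w}\overline e - D_{\chi,w}D_{\chi,u}\overline e - D_{\chi,[u,w]}\overline e = 0.
\]
Since $\overline e$ is a frame, $F_{D_\chi}=0$ on $\U|_\W$. These sets cover $\U$, so $D_\chi$ is flat. The only point needing care is that $\U$ is an infinite-dimensional Fr\'echet manifold. I would either note that the curvature identity is purely algebraic and holds verbatim there, or state flatness directly as the existence of local parallel frames, which is all that is used later.

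For metric compatibility, note that $p^*D$ preserves $p^*h$, because $D$ is the Chern connection of $h$ and pullback preserves this property. The form $\chi$ is purely imaginary by \eqref{eq:chi}, since $d\pi_\U(v)$ is a real function. Then for any sections $\sigma,\tau$,
\[
d\langle\sigma,\tau\rangle=\langle p^*D\sigma,\tau\rangle+\langle\sigma,p^*D\tau\rangle
=\langle D_\chi\sigma,\tau\rangle+\langle\sigma,D_\chi\tau\rangle+(\chi+\overline\chi)\langle\sigma,\tau\rangle,
\]
and the last term vanishes because $\chi+\overline\chi=0$. Alternatively, the frame $\overline e$ has constant norm one and is $D_\chi$-parallel. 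Write any local section as $f\overline e$. Then $D_\chi(f\overline e)=df\,\overline e$, and the compatibility identity reduces to $d(f\overline g)=df\,\overline g+f\,d\overline g$.

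Finally, since $D_\chi$ is unitary, its parallel transport preserves the unit circle subbundle $S_{p^*\BL}$. The local parallel unitary frames $\overline e$ give local trivializations $S_{p^*\BL}|_{\U|_\W}\cong \U|_\W\times U(1)$. Two such frames on an overlap differ by a function of the form $e^{i\theta}$, which is parallel and hence locally constant. So the transition functions are locally constant, and this is exactly the statement that $S_{p^*\BL}$ is a flat $U(1)$-bundle.

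I expect no real obstacle, since the content is in Lemma~\ref{lm:parallel}. The mildest subtlety is the infinite-dimensional setting, which I would handle by phrasing flatness via local parallel frames as above.
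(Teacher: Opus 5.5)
Your proposal is correct and follows the paper's route. The paper deduces the corollary in one line from the existence of the local unitary $D_\chi$-parallel frames $\overline e$ of Definition~\ref{def:section} and Lemma~\ref{lm:parallel}; you spell out that same argument, namely that the curvature annihilates a parallel frame, that unitarity follows from the unit-norm parallel frame (equivalently, from $\chi$ being purely imaginary), and that the $U(1)$ transition functions between such frames are locally constant.
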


In light of Corollary~\ref{cy:flat}, we think of $S_{p^*\BL}$ as a $U(1)$-local system and make the following definition.
\begin{df}\label{df:cP}
    The $U(1)$-bundle $\pi_\cP:\cP\to \OO$ is the flat bundle associated with the local system $(\pi_\U)_*S_{p^*\BL}$.
\end{df}
\begin{rem}\label{rm:cP}
    The fiber of $\cP$ over $\Gamma\in \OO$ consists of the parallel sections of the restriction $S_{p^*\BL}|_{\U_\Gamma} $, that is, unitary and parallel sections of $\BL|_\Gamma$. So, $\cP$ satisfies~\eqref{eq:cP} as claimed.
\end{rem}

\subsubsection{The tangent bundle of \texorpdfstring{$\cP$}{P}}

Lemma~\ref{lm:parallel} and Remark~\ref{rm:cP} allow us to view $\Gamma_k\mapsto \overline e|_{\U_{\Gamma_k}}$ as a flat section of $\cP$ over $\W$, and by abuse of notation we will denote this section simply by $\overline e$.
\begin{lm}\label{lm:tanspce}
   The tangent space to $\cP$ at $(\Gamma,e)$ is canonically identified with $C^\infty(\Gamma)$ as follows: Given a path $\gamma=(\Gamma_t,e_t)_{t\in I},$ a map $f:I\times L\to X$ such that $f_t=f\circ i_t:L\to X$ represents $\Gamma_t$, and $\ef\in C^\infty(I\times L,f^*\mathbb L)$ such that $\ef_t:=i_t^*\ef$ satisfies $(f_t)_*\ef_t=e_t$, the derivative $\dot\gamma(t) \in C^\infty(\Gamma_t)$ is given by
\begin{equation}\label{eq:deriviso}
    \frac{d}{dt}(\Gamma_t,e_t)=-\frac{\i }{2\pi} (f_t)_*\left(\ef_t^{-1}\frac{D\ef}{dt}\Big|_{t}\right).
\end{equation}
\end{lm}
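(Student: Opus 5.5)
We prove Lemma~\ref{lm:tanspce} in a local chart of $\cP$ built from the flat section $\overline e$. Fix $(\Gamma,e)\in\cP$ and a Weinstein neighborhood $W$ with coordinate neighborhood $\W$. By Lemma~\ref{lm:parallel} and Remark~\ref{rm:cP}, every element of $\pi_\cP^{-1}(\W)$ can be written uniquely as $(\Gamma_k, e^{2\pi\i\theta}\,\overline e|_{\Gamma_k})$ with $\theta\in\R/\mathbb Z$, since a unitary parallel section of $\BL|_{\Gamma_k}$ is determined up to a constant phase ($\Gamma_k$ is connected). This gives a local trivialization $\pi_\cP^{-1}(\W)\cong\W\times U(1)$, so $T_{(\Gamma,e)}\cP\cong\T{\Gamma}\oplus\R$. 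The map $(h,c)\mapsto h+c$ is an isomorphism onto $C^\infty(\Gamma)$, because $\T{\Gamma}$ consists of functions of zero $\re\Omega$-mean and $c$ is a constant.

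Next we show that the right-hand side of~\eqref{eq:deriviso} is well defined. Replacing $f$ by $f\circ(\id\times\varphi_t)$ with $\varphi_t\in\Diff(L)$ changes $\dot f_t$ by a vector tangent to $\Gamma_t$. The corresponding change in $\ef_t^{-1}D\ef/dt$ is $\ef_t^{-1}D_{w}e_t$ for some $w$ tangent to $\Gamma_t$, and this vanishes because $e_t$ is parallel along $\Gamma_t$. Any other $\ef'$ with the same pushforwards agrees with $\ef$ at each $t$, so the right-hand side depends only on the path. Since $|\ef|$ is identically $1$ and $D$ is unitary, $\ef^{-1}D\ef/dt$ is purely imaginary. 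Hence the right-hand side is a real function on $\Gamma_t$.

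The main step is to compute this expression in the chart and compare it with $h+c$. Write $e_t=e^{2\pi\i\theta_t}\overline e|_{\Gamma_{k_t}}$, take $f_t(x)=d(k_t)_x$, and take $\ef_t=f_t^*e_t$. Expanding $\overline e$ by Definition~\ref{def:section} and repeating the computation in the second half of the proof of Lemma~\ref{lm:parallel} gives
\[
\ef_t^{-1}\frac{D\ef}{dt}=2\pi\i\bigl(\dot\theta_t+\dot k_t-\zeta(\dot k_t)\bigr).
\]
Multiplying by $-\frac{\i}{2\pi}$ and using~\eqref{eq:TGammat} yields $\dot\Gamma_t+\dot\theta_t$. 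So, up to sign conventions that must be tracked carefully, the expression equals $h+c$ with $h=\dot\Gamma_t\in\T{\Gamma_t}$ and $c=\dot\theta_t$.

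It follows that the formula realizes the linear isomorphism $T\cP\to C^\infty(\Gamma)$ above. Canonicity, meaning independence of the chart, holds because the formula was shown to be intrinsic and independent of the representatives $(f,\ef)$. The projection of this identification to $\T{\Gamma}$, given by subtracting the mean, recovers Lemma~\ref{lm:Otanspce}, consistent with $d\pi_\cP$. I expect the main obstacle to be the careful verification of the displayed identity, namely matching the derivative of $\p$ along the moving fiber point with the term $\dot k_t(x)$. This is handled by the parallel transport computation already carried out in Lemma~\ref{lm:parallel}.
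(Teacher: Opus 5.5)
Your proposal is correct and follows essentially the same route as the paper: trivialize $\cP|_\W$ by the flat section $\overline e$, evaluate the right-hand side of \eqref{eq:deriviso} for the representatives $f_t=dk_t\circ f_0$ (along which $\p$ is parallel in the fiber direction), and handle reparametrizations $f_t\circ\varphi_t$ using that $e_t$ is parallel along $\Gamma_t$. The differences are cosmetic. You compute at general $t$ using $dF=-\zeta$ and \eqref{eq:TGammat}, whereas the paper works at $t=0$, where $dF_\Gamma=0$. Your sign worry is unfounded, since $-\frac{\i}{2\pi}\cdot 2\pi\i=1$ and your $\theta$-coordinate matches the paper's identification of $T_1U(1)$ with $\R$.
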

\begin{proof}
    Consider paths $\gamma$ with $\gamma(0)= (\Gamma_0,e_0)=(\Gamma,e)$. Let $\overline e$ be as in Definition~\ref{def:section}, which gives a trivialization $\cP|_\W\cong \W\times U(1)$ and thus an isomorphism 
\begin{equation}\label{eq:splitiso}
T_{(\Gamma,e)}\cP\cong \T{\Gamma}\oplus T_1U(1)\cong C^\infty (\Gamma),
\end{equation}
where we identify $T_1U(1)=\i\R$ with $\R$ by multiplication by $-\frac{\i}{2\pi}$. We show that the map given by formula~\eqref{eq:deriviso} is an isomorphism by showing that it coincides with the isomorphism~\eqref{eq:splitiso}.
We do this first for a special choice of representatives $f_t$ of $\Gamma_t$ and then show independence of the choice.

    Define a family of flat unitary sections on $\Gamma_t$ by $\overline e_t=\overline e|_{\U_{\Gamma_t}}$. Writing $e_t=a_t\overline e_t$ for $a_t\in U(1)$ with $a_0=1$, under the isomorphism~\eqref{eq:splitiso}, we have
    \begin{equation}\label{eq:gammadot}
        \dot\gamma(0)=\dot\Gamma_0-\frac{\i}{2\pi}\dot a_0 .
    \end{equation}

Let $h_0=\dot k_0$ be the derivative of $\Gamma_t$ at $t=0$ under the usual identification $T_\Gamma\OO\cong \T{\Gamma}$. Identify, as above, the Weinstein neighborhood $\Gamma \subset W \subset X$ and the open subset $V \subset T^*\Gamma,$ and recall the extension of $e$ to a section $\p$ of $\BL|_W$ parallel along fibers of $T^*\Gamma$ before Definition~\ref{def:section}. Assume that $f_t=dk_t\circ f_0$ for each $t$ and some $k_t\in \T{\Gamma}$ such that $\im dk_t \subset V.$ Define $\k\in C^\infty(I\times L)$ by $\k\circ i_t=k_t\circ f_0$, and
    \[
    \overline\ef=\exp\left(2\pi\i (\k+F(\Gamma_t))\right)f^*\p.
    \]
    Thus, $\overline\ef_t:=i_t^*\overline \ef=f_t^*\overline e_t$. By the definition \eqref{eq:zeta} it is clear that $dF_{\Gamma}=-\zeta_{\Gamma}=0$, so we get
    \begin{align*}
        \frac{D\overline \ef}{dt}\Big|_{t=0}&=\left(\frac{d}{dt}\Big|_{t=0}e^{2\pi \i (k_t\circ f_0+F(\Gamma_t))}\right)f_0^*\p\\
        &=2\pi\i f_0^*\left( \left(\dot k_0+dF_\Gamma\left(\dot k_0\right)\right)\p|_{\Gamma_0}\right)=2\pi \i f_0^*(h_0e_0),
    \end{align*}
    hence
    \begin{align*}
        \frac{D\ef}{dt}\Big|_{t=0}=\frac{D(a\overline \ef)}{dt}\Big|_{t=0}=\dot a_0\overline \ef_0+ \frac{D\overline \ef}{dt}\Big|_{t=0}=\left(\dot a_0+2\pi \i (h_0\circ f_0)\right)\ef_0,
    \end{align*}
    and finally
    \begin{align*}
        -\frac{\i }{2\pi}(f_0)_* \left(\ef_0^{-1}\frac{D\ef}{dt}\Big|_{t=0}\right)=h_0-\frac{\i}{2\pi}\dot a_0,
    \end{align*}
    which agrees with $\dot\gamma(0)$ under the isomorphism \eqref{eq:splitiso} according to \eqref{eq:gammadot} keeping in mind the definition of $h_0$.
    
    In general, $f_t$ is of the form $\hat f_t\circ\varphi_t$, for $\hat f_t$ as before and $\varphi_t\in\Diff(L)$. Let $\hat\ef\in C^\infty\left(I\times L,\hat f^*\mathbb L\right)$ such that $\hat\ef_t:=i_t^*\hat\ef$ satisfies $\big(\hat f_t\big)_*\hat\ef_t=e_t$.
    We get
    \begin{align*}
        \frac{D\ef}{dt}\Big|_{t=0}=\frac{D}{dt}(\varphi_t^*\hat\ef)\Big|_{t=0}=\nabla_{\dot\varphi_0}\hat \ef_0+\frac{D\hat \ef}{dt}\Big|_{t=0}\circ\varphi_0.
    \end{align*}
    Since $\hat \ef_0$ is parallel, the first term vanishes, and we get
    \[
    -\frac{\i }{2\pi}(f_0)_* \left(\ef_0^{-1}\frac{D\ef}{dt}\Big|_{t=0}\right)=-\frac{\i }{2\pi}\big(\hat f_0\big)_* \left(\hat \ef_0^{-1}\frac{D\hat \ef}{dt}\Big|_{t=0}\right).
    \]
\end{proof}
\begin{lm}\label{lm:connection}
    The horizontal subspace $H_\Gamma\subset T_{(\Gamma,e)}\cP$ associated with the flat connection is given by $\T{\Gamma}\subset C^\infty(\Gamma)$ under the identification of Lemma~\ref{lm:tanspce}.
\end{lm}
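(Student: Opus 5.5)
The horizontal subspace at $(\Gamma,e)$ consists of the velocities of flat local sections of $\cP$ through $(\Gamma,e)$. The plan is to compute these velocities using the explicit flat section $\overline e$ of Definition~\ref{def:section}, together with the formula~\eqref{eq:deriviso} of Lemma~\ref{lm:tanspce}.

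By Lemma~\ref{lm:parallel} and Remark~\ref{rm:cP}, $\Gamma_k\mapsto \overline e|_{\U_{\Gamma_k}}$ is a flat section of $\cP$ over $\W$ with $\overline e|_{\U_\Gamma}=e$; note $F(\Gamma)$ can be normalized to $0$, or else we multiply by a constant phase, which preserves flatness. Since $\cP$ is a $U(1)$-bundle with flat connection, the horizontal subspace at $(\Gamma,e)$ is exactly the image of $d\overline e$ at $\Gamma$. Hence $H_\Gamma$ is the set of vectors $\frac{d}{dt}\big|_{t=0}(\Gamma_t,\overline e|_{\U_{\Gamma_t}})$ over paths $\Gamma_t$ in $\W$ with $\Gamma_0=\Gamma$.

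To compute these, take $\Gamma_t=\Gamma_{k_t}$ with $k_0=0$ and representatives $f_t=dk_t\circ f_0$. In the proof of Lemma~\ref{lm:tanspce} this is exactly the case $a_t\equiv 1$. There the computation gives
\[
-\frac{\i}{2\pi}(f_0)_*\left(\overline\ef_0^{-1}\frac{D\overline\ef}{dt}\Big|_{t=0}\right)=h_0=\dot k_0,
\]
where $dF_\Gamma=-\zeta_\Gamma=0$ because $f_0=\id$ and $\dot k_0\in\T{\Gamma}$. By~\eqref{eq:gammadot} with $\dot a_0=0$, the tangent vector equals $\dot\Gamma_0\in\T{\Gamma}$. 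As $\dot k_0$ ranges over all of $\T{\Gamma}$ (take $k_t=t h$), we get $H_\Gamma\supseteq\T{\Gamma}$. Conversely, every horizontal vector is of this form, so $H_\Gamma=\T{\Gamma}$.

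As a consistency check, the vertical direction, obtained by rotating $e$ by $a_t=e^{2\pi\i c t}$, corresponds to the constant function $c$. This complements $\T{\Gamma}$ in $C^\infty(\Gamma)$, matching the splitting~\eqref{eq:splitiso}.

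The main subtlety is justifying that the identification~\eqref{eq:splitiso}, defined via the trivialization by $\overline e$, is independent of choices and agrees with~\eqref{eq:deriviso}. Lemma~\ref{lm:tanspce} already settles this, so the only genuinely new point is that flat sections realize the horizontal distribution, which is the definition of the flat connection on the associated bundle.
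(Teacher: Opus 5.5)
Your proposal is correct and follows essentially the paper's argument: you differentiate the flat section $\overline e$ of Definition~\ref{def:section} along $\Gamma_{k_t}$, using the computation in Lemma~\ref{lm:tanspce} together with $dF_\Gamma=-\zeta_\Gamma=0$, and obtain $\dot k_0\in\T{\Gamma}$. The only difference is that you get equality directly by letting $k_t=th$ range over all of $\T{\Gamma}$, whereas the paper proves only $H_\Gamma\subseteq\T{\Gamma}$ and concludes because both subspaces have codimension one.
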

\begin{proof}
    Since $\T{\Gamma},H_\Gamma$ are both of codimension 1, it suffices to prove $H_\Gamma\subseteq \T{\Gamma}$. For this, it suffices to prove that the derivatives of the flat sections $\overline e:\W\to\cP $ in any direction lie in $\T{\Gamma}$ for arbitrary $\W$ and~$e$. Indeed, given a path $\Gamma_t=\Gamma_{k_t}\in \W$ for $k_t\in \T{\Gamma}$, by Definition~\ref{def:section} we have
    \[
    \overline e_{\Gamma_{k_t}}=\exp\left(2\pi \i (K_t+F(\Gamma_{k_t})\right)\p|_{\Gamma_{k_t}}
    \]
    and by the Lemma~\ref{lm:tanspce} together with $dF_{\Gamma}=-\zeta_\Gamma=0$, its derivative at $t=0$ is given by
    \[
    -\frac{\i}{2\pi}\cdot2\pi\i \dot k_0=\dot k_0\in\T{\Gamma}.
    \]
\end{proof}
\begin{cy}\label{cy:derivatives}
Under the identifications of Lemmas~\ref{lm:Otanspce} and~\ref{lm:tanspce}, the isomorphism $d\pi_{\cP}:H_{\Gamma}\to T_{\Gamma}\OO$ is given by the identity.
\end{cy}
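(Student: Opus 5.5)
This follows by combining Lemma~\ref{lm:connection} with the definition of $\pi_\cP$ and the identification of Lemma~\ref{lm:Otanspce}. Take $h\in H_\Gamma$. By Lemma~\ref{lm:connection}, $h\in\T{\Gamma}$, and it is realized as the derivative at $t=0$ of a flat local section. Concretely, choose a Weinstein neighborhood $W$ and the associated chart $\W$, set $k_t=th$, and consider the path $\gamma(t)=(\Gamma_{k_t},\overline e_{\Gamma_{k_t}})$ of Definition~\ref{def:section}. The computation in the proof of Lemma~\ref{lm:connection} shows that, under the identification of Lemma~\ref{lm:tanspce}, $\dot\gamma(0)=\dot k_0=h$.

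Next I compute $d\pi_\cP(\dot\gamma(0))$. It is the velocity at $t=0$ of the path $\Gamma_{k_t}$ in $\OO$. By~\eqref{eq:TGammat}, this velocity is $\dot k_0-\zeta_{\Gamma}(\dot k_0)$ in $\T{\Gamma}$ under Lemma~\ref{lm:Otanspce}. At $k_0=0$ the map $f_0$ is the identity, so $\zeta_\Gamma(h)=\int_\Gamma h\re\Omega=0$ because $h\in\T\Gamma$. Hence $d\pi_\cP(\dot\gamma(0))=h$.

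Alternatively, one can check this directly from the formulas. The representative $f_t=dk_t\circ f_0$ has velocity $v=\frac{d}{dt}dk_t$, a vertical vector with $i_v\omega=dh$. Its normalized primitive is therefore $h$, which agrees with Lemma~\ref{lm:Otanspce} and with the first term $h_0$ in the formula of Lemma~\ref{lm:tanspce}.

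This shows that $d\pi_\cP|_{H_\Gamma}$ is the identity $\T\Gamma\to\T\Gamma$, and in particular that it is an isomorphism. The only point requiring care is consistency of signs and normalizations between~\eqref{eq:TGammat} and the chart identification. This is exactly what the vanishing $dF_\Gamma=-\zeta_\Gamma=0$ at the base point handles.
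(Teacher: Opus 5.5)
Your argument is correct, but it is organized a little differently from the paper's. The paper argues for an arbitrary tangent vector $h\in T_{(\Gamma,e)}\cP\cong C^\infty(\Gamma)$. It observes that $h$ and $d\pi_\cP(h)$ differ only by a constant function, namely the vertical component; this is visible in the proof of Lemma~\ref{lm:tanspce}, where $\dot\gamma(0)=h_0-\frac{\i}{2\pi}\dot a_0$. Since both $H_\Gamma$ and $T_\Gamma\OO$ consist of functions with zero $\re\Omega$-integral, that constant must vanish on $H_\Gamma$.

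You instead realize each horizontal vector by the explicit flat-section curve $t\mapsto(\Gamma_{th},\overline e_{\Gamma_{th}})$. You then compute both identifications directly, using the computation in Lemma~\ref{lm:connection} and \eqref{eq:TGammat} with $\zeta_\Gamma(h)=\int_\Gamma h\re\Omega=0$.

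The paper's route needs no choice of curve. It also gives $d\pi_\cP$ on all of $T_{(\Gamma,e)}\cP$: $d\pi_\cP(h)$ is $h$ minus its $\re\Omega$-weighted average, which in particular identifies the vertical space with the constants. Your route is more hands-on and covers only horizontal vectors, which is all the corollary asks for.

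One small point needs fixing. By Definition~\ref{def:section}, $\overline e_\Gamma=e^{2\pi\i F(\Gamma)}e$, so as written your curve need not pass through $(\Gamma,e)$. Either normalize $F(\Gamma)=0$, or note that the identification of Lemma~\ref{lm:tanspce} is unchanged by a constant phase and that the flat connection's horizontal distribution is $U(1)$-invariant.
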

\begin{proof}
    Under the identifications, $h,d\pi_\cP(h)$ differ by a constant for any $h\in T\cP$, and since elements of both $H_{\Gamma},T_{\Gamma}\OO$ are required to have zero integral, the result follows.
\end{proof}

Using Lemma~\ref{lm:tanspce} we make the following definition.
\begin{df}\label{df:metric}
    The Riemannian metric $\overline G$ on $\cP$ is given by
    \[
    \overline G(h,k)=\int_{\Gamma} hk\re\Omega, \qquad h,k\in T_{(\Gamma,e)}\cP\cong C^\infty(\Gamma).
    \]
\end{df}
\begin{rem}
    With the metric $G$ on $\OO$ and the metric $\overline G$ on $\cP$, the projection $\pi_\cP$ is a Riemannian submersion.
\end{rem}

\subsection{The derivative of \texorpdfstring{$\varphi_m$}{phim}}
For $h \in C^\infty(\Gamma)$ define $\chi_{\Gamma,e,h} \in \H_m^\vee$ by
\[
\chi_{\Gamma,e,h}(s) = -2\pi\i m\int_\Gamma h \frac{s}{e^m} \Omega.
\]
Keeping in mind the identification $T_{(\Gamma,e)}\cP\cong C^\infty(\Gamma)$ of Lemma~\ref{lm:tanspce}, we have:
\begin{pr}\label{pr:der}
The derivative of $\vp_m$ at $(\Gamma,e) \in \cP$ is given by
\[
d(\vp_m)_{(\Gamma,e)}(h) = \chi_{\Gamma,e,h}, \qquad h \in C^\infty(\Gamma).
\]
\end{pr}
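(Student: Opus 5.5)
We compute $d\vp_m$ along a smooth path $\gamma(t)=(\Gamma_t,e_t)$ with $\gamma(0)=(\Gamma,e)$, using the representative description of Lemma~\ref{lm:tanspce}. Fix $f:I\times L\to X$ with $f_t$ representing $\Gamma_t$, and $\ef\in C^\infty(I\times L,f^*\BL)$ with $(f_t)_*\ef_t=e_t$. For fixed $s\in\H_m$ we pull the integral back to $L$:
\[
\psi_{\Gamma_t,e_t}(s)=\int_L \frac{f_t^*s}{\ef_t^m}\, f_t^*\Omega .
\]
Here $f_t^*s/\ef_t^m$ is the function on $L$ obtained by dividing the section $f^*s$ of $f^*\BL^{\otimes m}$ by the nonvanishing section $\ef^m$. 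The plan is to differentiate under the integral at $t=0$. This is justified because everything is smooth on the compact manifold $I\times L$.

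The time derivative splits into two terms. Set $\tilde f=f\circ(t,x)$ and write $\partial_t$ for the coordinate vector field on $I\times L$.

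First, we differentiate the ratio. Using the pullback connection,
\[
\frac{d}{dt}\frac{f^*s}{\ef^m}=\frac{(f^*D)_{\partial_t}(f^*s)}{\ef^m}-m\frac{f^*s}{\ef^m}\,\ef^{-1}\frac{D\ef}{dt}.
\]
By \eqref{eq:deriviso}, $\ef_0^{-1}\frac{D\ef}{dt}|_0=2\pi\i\, f_0^*h$, where $h=\dot\gamma(0)$. So the second summand contributes exactly $-2\pi\i m\int_\Gamma h\frac{s}{e^m}\Omega$, which is $\chi_{\Gamma,e,h}(s)$.

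Second, we must show that everything else cancels. The remaining terms are
\[
\int_L \frac{f_0^*(D_v s)}{\ef_0^m}f_0^*\Omega+\int_L\frac{f_0^*s}{\ef_0^m}\frac{d}{dt}\Big|_0 f_t^*\Omega,
\]
where $v=df(\partial_t)$ is the variation field along $\Gamma$. We will rewrite both on $\Gamma$, with $v$ a section of $TX|_\Gamma$, and use Cartan's formula for the variation of a pulled-back form:
\[
\frac{d}{dt}\Big|_0 f_t^*\Omega=f_0^*\big(i_v d\Omega+d\,i_v\Omega\big)=f_0^*\,d(i_v\Omega).
\]
This uses $d\Omega=0$, which holds since $\Omega$ is a holomorphic top form. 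Integrating by parts on the closed manifold $\Gamma$, the second term becomes
\[
-\int_\Gamma d\!\left(\frac{s}{e^m}\right)\wedge i_v\Omega .
\]
Since $e$ is parallel along $\Gamma$, we have $d(s/e^m)=(Ds)|_{T\Gamma}/e^m$. It therefore suffices to prove the pointwise identity on $\Gamma$
\[
\frac{D_vs}{e^m}\,\Omega|_\Gamma=\frac{(Ds)|_{T\Gamma}}{e^m}\wedge i_v\Omega\big|_\Gamma .
\]

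This identity is the main step. We split $v=v^{T}+Jw$ with $v^T,w$ tangent to $\Gamma$, which is possible because $\Gamma$ is totally real. We handle the two parts separately.
\begin{enumerate}
\item For the tangential part, $\alpha\wedge i_{v^T}\Omega=\alpha(v^T)\Omega$ on $\Gamma$ for any $1$-form $\alpha$, since $\alpha\wedge\Omega|_\Gamma=0$ is an $(n+1)$-form on an $n$-manifold.
\item For the normal part, we use that $\Omega$ is of type $(n,0)$, so $i_{Jw}\Omega=\i\, i_w\Omega$. We also use that $s$ is holomorphic, so $D_{Jw}s=\i D_w s$, because $Ds$ is of type $(1,0)$ for the Chern connection. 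Combining these with item (a) gives $D_{Jw}s\,\Omega=Ds|_{T\Gamma}\wedge i_{Jw}\Omega$.
\end{enumerate}
Adding the two parts proves the identity. Thus the extra terms cancel and $d(\vp_m)_{(\Gamma,e)}(h)=\chi_{\Gamma,e,h}$.

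The expected obstacle is bookkeeping: keeping the sign conventions in \eqref{eq:deriviso} and the orientation in the integration by parts consistent. Reparametrization independence needs no separate check, since Lemma~\ref{lm:tanspce} already establishes it.
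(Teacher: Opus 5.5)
Your proof is correct, but it organizes the computation differently from the paper.

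The paper never separates the ratio from the form. It applies Cartan's formula on $I\times L$ to the single $n$-form $\ef^{-m}f^*(s\Omega)$ and discards the exact term by Stokes. It then eliminates everything except the $\frac{D\ef}{dt}$ contribution in one line, using $D(s\Omega)=0$ (holomorphic section times holomorphic top form) together with $i_t^*D\ef=D\ef_t=0$.

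You instead use the product rule and apply Cartan only to $\Omega$, with $d\Omega=0$. You then integrate by parts on $\Gamma$ using parallelism of $e$. This reduces the cancellation of the leftover terms to the pointwise identity $(D_vs)\,\Omega|_\Gamma=(Ds)|_{T\Gamma}\wedge(i_v\Omega)|_\Gamma$.

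That identity is exactly the pullback to $\Gamma$ of $i_v(Ds\wedge\Omega)=0$. Given $d\Omega=0$, the vanishing $Ds\wedge\Omega=0$ is the same fact as the paper's $D(s\Omega)=0$. So your splitting $v=v^T+Jw$ is not actually needed: the identity holds for any $v$ by type reasons alone. The splitting does show something, though: the tangential part of the variation cancels purely for dimensional reasons, and holomorphicity of $s$ and the $(n,0)$ type of $\Omega$ are used only on the $J$-normal part.

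The paper's route is shorter; yours is more explicit about which ingredient kills which term.

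Two minor points. Since $v$ is only defined along $\Gamma$, the Cartan term should be written $d(f_0^*i_v\Omega)$ rather than $f_0^*\,d(i_v\Omega)$. Also, the notation $\tilde f$ you introduce is never used.
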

\begin{proof}
Let $\gamma=(\Gamma_t,e_t)_{t\in (-\epsilon,\epsilon)} $ be a path with $\gamma(0)=(\Gamma,e)$ and $\dot\gamma(0)=h$. Write $h_t=\dot\gamma(t)$. Let $f,\ef$ be as in Lemma~\ref{lm:tanspce}.
Then we have
\[
d\vp_m(h) = \frac{d}{dt}\Big|_{t=0} \psi_{\Gamma_t,e_t} .
\]
So, we calculate
\begin{align*}
\frac{d}{dt} \psi_{\Gamma_t,e_t}(s)
&= \frac{d}{dt} \int_{\Gamma_t} \frac{s}{e_t^m} \Omega = \frac{d}{dt} \int_L f_t^* \left(\frac{s}{e_t^m} \Omega\right) \\
&= \int_L \frac{d}{dt} \ef_t^{-m} f_t^* (s \Omega).
\end{align*}
Since $f_t = f \circ i_t$ and $\frac{di_t}{dt} = \frac{\partial}{\partial t},$ by the Cartan formula,
\begin{align*}
   \frac{d}{dt} \ef_t^{-m} f_t^* (s \Omega) &=\frac{d}{dt}i_t^*\left(\ef^{-m}f^*(s\Omega) \right)\\
   &= i_t^* \left( i_{\frac{\partial}{\partial t}} d(\ef^{-m} f^*(s\Omega)) + d\left(i_{\frac{\partial}{\partial t}} \ef^{-m} f^*(s\Omega)\right)\right). 
\end{align*}
The second term does not contribute on account of Stokes' theorem, so we focus on the first. Since $s$ is holomorphic and $\Omega$ is of type $(n,0)$, it follows that $D(s\Omega) = 0.$ Thus,
\begin{align*}
d(\ef^{-m} f^*(s\Omega)) &= (D\ef^{-m})f^*(s\Omega) + \ef^{-m}f^* D(s \Omega) \\
&= -m\ef^{-m-1}(D\ef)f^*(s\Omega)
\end{align*}
and Lemma~\ref{lm:tanspce} together with the fact that $\ef_t=f_t^*e_t$ is parallel gives
\[\resizebox{\linewidth}{!}{$
\begin{aligned}
    i_t^* \Big( i_{\frac{\partial}{\partial t}} d(\ef^{-m} f^*(s\Omega))&\Big) =i_t^*\left(-m\ef^{-m-1}\left(\frac{D\ef}{dt}f^*(s\Omega)-(D\ef) i_{\frac{\partial}{\partial t}}f^*(s\Omega) \right)\right)\\
    &=-m\ef_t^{-m-1}\left(\frac{D\ef}{dt}\Big|_{t}f_t^*(s\Omega)-(D\ef_t)i_t^*\left(i_{\frac{\partial}{\partial t}}f^*(s\Omega) \right)\right)
    \\
    &= -2\pi\i m (f_t^*h_t) \ef_t^{-m} f_t^*(s\Omega).
\end{aligned}
$}\]
Finally,
\begin{align*}
    \frac{d}{dt}\Big|_{t=0} \psi_{\Gamma_t,e_t}(s)&=-2\pi\i m\int_L(f_0^*h) \ef_0^{-m} f_0^*(s\Omega)\\
    &=-2\pi\i m\int_\Gamma h\frac{s}{e^m}\Omega=\chi_{\Gamma,e,h}(s)
\end{align*}
\end{proof}

\begin{proof}[Proof of Theorem~\ref{tm:bound}]

Since $\re\Omega|_\Gamma$ agrees with the orientation, it satisfies $\re\Omega|_\Gamma=\left|\re\Omega|_\Gamma\right|_g\vol_\Gamma$, and by $\epsilon$-positivity
\[
\left|\Omega|_\Gamma\right|_g\vol_\Gamma\le\left(\left|\re\Omega|_\Gamma\right|_g+\left|\im\Omega|_\Gamma\right|_g\right)\vol_\Gamma\le\left(1+\frac1\epsilon\right)\re\Omega|_\Gamma.
\]

    In view of Proposition~\ref{pr:der}, we need to bound $\| \chi_{\Gamma,e,h}\| $. Let $s_1,...,s_k\in \HH_m$ be an orthonormal basis, and for each $j$ let $S_j=\max_X|s_j|_h$. 
    Note that $|s_j/e^m|=|s_j|_h$, since $e^m$ is unitary. Thus,
    \begin{align*}
        \left|\chi_{\Gamma,e,h}(s_j)\right|&=2\pi m\left|\int_\Gamma h\frac{s_j}{e^m}\Omega \right|\\
        &\le 2\pi m\int_\Gamma|h|\left|s_j\right|_h\left|\Omega\right|_g\vol_\Gamma\\
        &\le 2\pi m\left(1+\frac1\epsilon\right)\int_\Gamma|h||s_j|_h\re\Omega\\
        &\le2\pi m\left(1+\frac1\epsilon\right)\left(\int_\Gamma|h|^2\re\Omega\right)^{1/2}\left(\int_\Gamma|s_j|_h^2\re\Omega\right)^{1/2}\\
        &\le2\pi m\left(1+\frac1\epsilon\right)\|h\|_{\overline G} \, S_j(\re\Omega(A))^{1/2},
    \end{align*}
    where $A$ is the homology class represented by $\Gamma$. Note that $A$ is independent of the specific $\Gamma\in \OO$, as they are all homotopic.

  Putting $S=\left(S_1^2+...+S_k^2\right)^{1/2},$ we get
  \[
  \|d\varphi_m(h)\|\le 2\pi m\left(1+\frac1\epsilon\right)S(\re\Omega(A))^{1/2}\|h\|_{\overline G}
  \]
  so
  \begin{equation}\label{eq:dphi}
      \|d\varphi_m\|\le 2\pi m\left(1+\frac1\epsilon\right)S(\re\Omega(A))^{1/2}
  \end{equation}
    
\end{proof}
\begin{rem}
    Using Sobolev embeddings, elliptic regularity of $\overline\partial$ and Lemma~\ref{lm:fundamental}, it can be shown that the constants $S_j$ in the preceding proof are bounded by $Cm^{n+1}$ for some $C>0$ independent of $j,m$. Using the Hirzebruch-Riemann-Roch theorem to calculate $k = O(m^n)$, we can bound the constant $S$ in~\eqref{eq:dphi} by $Cm^{3n/2+1}$.
\end{rem}

\section{An approximation theorem}\label{sec:sep}
In this section we prove the following theorem, which is a self-contained restatement of Theorem~\ref{tm:approximation1} in its natural generality. In particular, there is no need for $X$ to be Calabi-Yau.
\begin{tm}\label{tm:approximation2}
Let $(X,\omega)$ be a compact K\"ahler manifold, and let $(L,h)$ be a holomorphic Hermitian line bundle over $X$ such that $F_h=-2\pi \i.$
Let $\Gamma\subset X$ be a compact Bohr-Sommerfeld Lagrangian submanifold, that is, $L|_\Gamma$ has trivial holonomy.
Let $e$ be a parallel unitary section of $L|_{\Gamma}$ and let $b\in C^\infty(\Gamma,\C) $. Let $V\subseteq X$ be open such that $\supp b\subseteq V\cap\Gamma.$ Then, for any $\varepsilon>0$ and $r\in \N,$ there is $M\in\N$ such that for $m\ge M$ there is a holomorphic section $s\in H^0(X,L^m)$ such that
    \[
    \left\|s|_\Gamma-be^m\right\|_{C^r}<\varepsilon, \qquad
    \left\|s|_{X\setminus V}\right\|_{C^r}<\varepsilon.
    \]
\end{tm}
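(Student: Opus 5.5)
The plan is to construct an approximate holomorphic section concentrated near $\Gamma$ and then correct it by H\"ormander's $L^2$ estimate for $\overline\partial$ on $L^m$, following Tian's peak-section construction with $\Gamma$ in place of a point. First, I would extend $e$ to a section $\tilde e$ of $L$ on a tubular neighborhood $U$ of $\Gamma$. The extension should be \emph{almost holomorphic}: $\overline\partial\tilde e$ vanishes to infinite order along $\Gamma$, or at least to high order $N$. It should also have a norm with a strict maximum along $\Gamma$ on normal fibers, say $|\tilde e|_h\le \exp(-c\,d(x,\Gamma)^2)$ for some $c>0$. This is the content announced as Lemma~\ref{lm:BSextpara}, built from the Nirenberg--Wells construction \cite[Lemma~3.1]{NiW69} of almost-holomorphic extensions off a totally real submanifold. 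Since $\Gamma$ is Lagrangian it is totally real, and the Gaussian decay comes from the curvature $F_h=-2\pi\i\omega$ being positive in the $J$-normal directions. Similarly, I would extend $b$ almost-holomorphically to $\tilde b$ with support in $V$.

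Next, I would set $\sigma_m=\rho\,\tilde b\,\tilde e^m$, where $\rho$ is a cutoff equal to $1$ near $\Gamma$ and supported in $U$. Then $\overline\partial\sigma_m$ has two sources. The first is the cutoff region, where $|\tilde e^m|\le e^{-cm\delta^2}$ is exponentially small. The second is the term $m\tilde e^{m-1}\tilde b\,\overline\partial\tilde e+\tilde e^m\overline\partial\tilde b$, which is $O(m\,d^N)e^{-cmd^2}$. Its $L^2$ norm, and in fact all its $C^k$ norms, is $O(m^{k+1-N/2-n/4})$ after integrating the Gaussian across the $n$ normal directions. Choosing $N$ large makes this $O(m^{-K})$ for any desired $K$. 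By H\"ormander's theorem, using $\ric$ bounded and the positivity of $m\omega$, for $m\ge M_0$ there is a solution $u_m$ of $\overline\partial u_m=\overline\partial\sigma_m$ with $\|u_m\|_{L^2}\le Cm^{-1/2}\|\overline\partial\sigma_m\|_{L^2}$. Then $s=\sigma_m-u_m\in H^0(X,L^m)$.

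It remains to upgrade the $L^2$ control of $u_m$ to $C^r$ control, which is the main obstacle. For this I would use interior elliptic estimates for $\overline\partial$ on balls of radius $m^{-1/2}$, rescaled so that the bundle curvature becomes bounded, as in \cite[Theorem~6.1]{NiW69} and the Sobolev embedding. This gives $\|u_m\|_{C^r}\le Cm^{a(r,n)}\big(\|u_m\|_{L^2}+\|\overline\partial\sigma_m\|_{C^{r+n}}\big)$ for some polynomial power $a(r,n)$. That power is beaten by taking $N$ large, so $\|u_m\|_{C^r}<\varepsilon/2$. Finally, $\sigma_m|_\Gamma=be^m$. On $X\setminus V$ we have $\sigma_m=0$ if $\rho\tilde b$ is supported in $V$, which I arrange by shrinking $U$ away from $\partial V$ near $\Gamma\setminus\supp b$. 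The $C^r$ norms of $be^m$ involve derivatives of $e^m$ only through $De=0$ tangentially, so no powers of $m$ appear along $\Gamma$. Hence both required inequalities hold for $m\ge M$.

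The delicate points are, first, obtaining the extension with simultaneous high-order vanishing of $\overline\partial\tilde e$ and Gaussian decay, and second, tracking the polynomial losses in $m$ in the elliptic step so that they are dominated by the $m^{-N/2}$ gain.
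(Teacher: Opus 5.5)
Your proposal is correct and follows the paper's proof: almost-holomorphic extensions of $e$ and $b$ (Lemmas~\ref{lm:BSextpara} and~\ref{lm:FunExt}), with $|\tilde e|$ peaked along $\Gamma$ because of curvature positivity, then H\"ormander's $L^2$ solution of $\overline\partial u=\overline\partial v$ on $L^m$, then elliptic regularity plus Sobolev embedding, whose polynomial losses in $m$ are absorbed by taking $N$ large. The differences are only technical and both variants work: you use a fixed cutoff with the Gaussian bound $|\tilde e|^m\le e^{-cmd^2}$ where the paper cuts off at scale $R_m=\log m/\sqrt m$ (Lemma~\ref{lm:a}), and you rescale to balls of radius $m^{-1/2}$ where the paper works in fixed charts with the $m^k$-growing constant of Lemma~\ref{lm:fundamental}.
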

Versions of this theorem appear in the previous works~\cite[Proposition~2.4]{Char03}, \cite[Proposition~19]{Ve20}, and~\cite[Proposition~2]{Ti25}.
\begin{rem}
    The $C^r$-norm in this theorem is defined in terms of the covariant derivative and the metric. There is another $C^r$-norm, defined in terms of coordinate charts and the standard $C^r$-norm on functions on Euclidean space. These two norms are equivalent up to constants. See the discussion before Lemma~\ref{lm:Cr} and Lemma~\ref{lm:Cr} itself, which analyzes the asymptotic growth of the constants as $m\to\infty$.
\end{rem}
In the proof of Theorem~\ref{tm:approximation2} we take inspiration from the proofs of Tian~\cite[Lemma~1.2]{tian1990polarized}, Nirenberg-Wells~\cite[Theorem~6.1]{NiW69} and Duval~\cite[Lemma]{Duval2015}: We start with an almost-holomorphic section $v$ extending $be^m$, and then perturb it to obtain a holomorphic section $s=v-u$, where $u$ is a small solution to the equation $\overline\partial u=\overline\partial v$.
\subsection{Preliminary lemmas}\label{subsec:lemmas}
For a hermitian vector bundle $(E,h)$ over a Hermitian manifold $(X,g)$, $(p,q)$-forms with values in $E$ are equipped with the $L^2$-norm
\[
\|w\|^2=\int _X|w|_{h,g}^2\vol_g.
\]
\begin{df}\label{df:positivity}
    Let $(M,J)$ be an almost complex manifold. A 2-form $\alpha$ is said to be positive if for all $x\in M,v\in T_xM$ with $v\ne 0$ we have
    \[
    \alpha(v,Jv)>0.
    \]
    It is said to be non-negative if a weak inequality holds for all $x,v$. We write $\alpha>\beta$ or $\alpha\ge \beta$ if $\alpha-\beta$ is positive or non-negative respectively.
\end{df}
The following is a standard result using H\"ormander's $L^2$ method:
\begin{lm}\label{lm:3.1}
    Let $(L,h)$ be a Hermitian holomorphic line bundle over a compact K\"ahler manifold $(X,\omega)$, and assume there is a constant $C>0$ such that $\i F_h+\i \ric g\ge C\omega$. 
 Then for any $w\in A^{0,1}(L)$ with $\overline\partial w=0$ there is $u\in A^0(L)$ such that $\overline\partial u=w$ and
    \[
    \|u\|^2\le \frac{1}{C}\|w\|^2
    \]
\end{lm}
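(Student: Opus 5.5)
The plan is to prove Lemma~\ref{lm:3.1} by the standard H\"ormander $L^2$ method: a Bochner--Kodaira--Nakano estimate for $\overline\partial$ acting on $L$-valued $(0,1)$-forms, followed by a Riesz representation argument.

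First, rewrite $L$-valued $(0,q)$-forms as $(n,q)$-forms with values in $L\otimes K_X^{-1}$, where $K_X$ is the canonical bundle. The Hermitian metric on $K_X^{-1}$ is the one induced by $g$. Its curvature satisfies $\i F_{K_X^{-1}} = \i\ric g$, that is, it is the Ricci form. So the bundle $E = L\otimes K_X^{-1}$ has curvature form $\i F_h + \i\ric g \ge C\omega$.

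Next, apply the Bochner--Kodaira--Nakano identity on the compact K\"ahler manifold $X$ to $E$-valued $(n,1)$-forms $\alpha$:
\[
\|\overline\partial\alpha\|^2+\|\overline\partial^*\alpha\|^2 \ge \langle [\i F_E,\Lambda]\alpha,\alpha\rangle.
\]
For $(n,1)$-forms, the operator $[\i F_E,\Lambda]$ acts as the curvature endomorphism, which is bounded below by the smallest eigenvalue of $\i F_E$ relative to $\omega$. This gives
\[
\|\overline\partial\alpha\|^2+\|\overline\partial^*\alpha\|^2\ge C\|\alpha\|^2 .
\]
This pointwise eigenvalue computation, together with keeping track of the normalization of $\omega$ versus $g$ in the definitions of positivity and of the norms, is the step I expect to be the main obstacle. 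The rest is functional analysis. Because $X$ is compact there are no boundary terms, and smooth forms are dense in the relevant domains, so it suffices to prove the inequality for smooth forms.

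Then run the duality argument. Let $w$ satisfy $\overline\partial w=0$. Decompose any test form $\alpha$ orthogonally as $\alpha=\alpha_1+\alpha_2$, with $\alpha_1\in\ker\overline\partial$ and $\alpha_2\perp\ker\overline\partial$. Then $\overline\partial^*\alpha_2=0$, and $\langle w,\alpha_2\rangle=0$ because $w\in\ker\overline\partial$. Hence
\[
|\langle w,\alpha\rangle|=|\langle w,\alpha_1\rangle|\le\|w\|\,\|\alpha_1\|\le C^{-1/2}\|w\|\,\|\overline\partial^*\alpha_1\| = C^{-1/2}\|w\|\,\|\overline\partial^*\alpha\|.
\]
Therefore the functional $\overline\partial^*\alpha\mapsto\langle \alpha,w\rangle$ is well defined on the range of $\overline\partial^*$, with norm at most $C^{-1/2}\|w\|$. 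By Hahn--Banach and Riesz there is $u\in L^2$ with $\|u\|\le C^{-1/2}\|w\|$ and $\langle \overline\partial^*\alpha,u\rangle=\langle\alpha,w\rangle$ for all $\alpha$. This says $\overline\partial u=w$ weakly, and the bound gives $\|u\|^2\le C^{-1}\|w\|^2$.

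Finally, obtain smoothness. Choose $u$ to be the minimal solution, which lies in $(\ker\overline\partial)^\perp$, so that $\overline\partial^*u=0$ as well. Then $u$ solves the elliptic system $(\overline\partial,\overline\partial^*)u=(w,0)$ with smooth right-hand side. By elliptic regularity $u\in A^0(L)$, which completes the proof.
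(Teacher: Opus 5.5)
Your proposal is correct and follows the same route as the paper: the paper also identifies $L$-valued $(0,1)$-forms isometrically with $(n,1)$-forms valued in $L\otimes K_X^{-1}$ (metric $h\otimes\det g$, curvature $F_h+\ric g\ge C\omega$) and then applies H\"ormander's $L^2$ existence theorem for $(n,1)$-forms. The only difference is that the paper cites Demailly's Theorem~8.8 for that last step, whereas you prove it directly via the Bochner--Kodaira--Nakano inequality, the Riesz duality argument, and elliptic regularity.
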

\begin{proof}
    Define $\tilde L=L\otimes K_X^* = L\otimes \Lambda^n T_{1,0}X$. Then $\tilde L$ is holomorphic, and carries a Hermitian metric $\tilde h=h\otimes \det g$, the curvature of which is
    \[
    F_{\tilde h}=F_h+ F_{ \det g}=F_h+ \ric g
    \]
    so by assumption, $\i F_{\tilde h}\ge C\omega$. By~\cite[Theorem~8.8]{Demailly1996}, a similar assertion to ours holds for $(n,1)$-forms with values in $\tilde L$. 

    But note that $(n,1)$-forms with values in $\tilde L$ are the same as $(0,1)$-forms with values in
    \[
    \tilde L\otimes \Lambda^nT^{1,0}X =L\otimes \Lambda^nT^{1,0}X\otimes \Lambda^nT_{1,0}X\cong L
    \]
    where the isomorphism preserves the metric, so our claim follows.
\end{proof}
Let $S$ be an $m$-dimensional submanifold of an $n$-fold $M$. We call a coordinate chart $\phi:U\to \R^n=\R^m\times\R^{n-m}$ for $M$ a \textit{slice chart} for $S$, or \textit{slice coordinates} for $S$, if $\phi^{-1}(\R^m\times\{0\})=S\cap U$.
\begin{lm}\label{lm:rho}

    There is a function $\rho$ in a neighborhood of $S$ such that $\rho^2$ is smooth and for each slice coordinate chart $\phi: U\to \R^m\times \R^{n-m},$
    denoting by $x,y$ the coordinates on $\R^m,\R^{n-m}$ respectively, we have  $\rho\le\mu|y|$ and $|y|\le\nu\rho$ in $U$, for some continuous functions $\mu,\nu\ge0$.
\end{lm}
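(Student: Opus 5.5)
The natural candidate is a distance to $S$. Fix an auxiliary Riemannian metric $g$ on $M$ and let $\rho(p)=d_g(p,S)$ be the distance from $p$ to $S$, defined on a tubular neighborhood $N$ of $S$. I would show three things: that $\rho^2$ is smooth on $N$, that $\rho\le\mu|y|$ in any slice chart, and that $|y|\le\nu\rho$ in any slice chart.

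\textbf{Smoothness of $\rho^2$.} Use the normal exponential map
\[
E:\nu S\supset \mathcal N\to N,
\]
which is a diffeomorphism from a neighborhood $\mathcal N$ of the zero section of the normal bundle onto $N$. If $S$ is not compact, I would choose $\mathcal N$ with radius given by a positive continuous function on $S$. By the Gauss lemma, after shrinking $\mathcal N$ one has
\[
\rho(E(\xi))=|\xi|_g .
\]
Hence
\[
\rho^2=|E^{-1}(\cdot)|_g^2,
\]
which is the composition of the smooth map $E^{-1}$ with the smooth fiberwise quadratic form $\xi\mapsto|\xi|_g^2$ on $\nu S$. So $\rho^2$ is smooth. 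If one wants to avoid the Gauss lemma, one can simply define $\rho=|E^{-1}(\cdot)|_g$ on $N$. This $\rho$ has $\rho^2$ smooth by construction, and the two comparisons below only use that $\rho^2$ is smooth, nonnegative, and vanishes exactly on $S$ with nondegenerate normal Hessian.

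\textbf{The bound $\rho\le\mu|y|$.} Let $\phi:U\to\R^m\times\R^{n-m}$ be a slice chart and work in the coordinates $(x,y)$. The function $f=\rho^2\circ\phi^{-1}$ is smooth and vanishes on $\{y=0\}$. By Hadamard's lemma, integrating along the segment in $y$, there are smooth functions $a_{ij}$ with
\[
f(x,y)=\sum_{i,j} a_{ij}(x,y)\,y_iy_j .
\]
This requires the segment from $(x,0)$ to $(x,y)$ to stay in the chart. For a general chart I would handle this by working locally and using continuity of the resulting bounds. The expansion gives $\rho^2\le A(x,y)|y|^2$ with $A$ continuous, for example $A=\bigl(\sum a_{ij}^2\bigr)^{1/2}$. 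Taking $\mu=\sqrt A$ gives $\rho\le\mu|y|$.

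\textbf{The bound $|y|\le\nu\rho$ (main obstacle).} This direction needs nondegeneracy, since a continuous $\nu$ must control $|y|^2/\rho^2$. At $y=0$ the matrix $a_{ij}(x,0)$ equals $\tfrac12$ of the Hessian of $\rho^2$ in the normal directions. Via the slice chart, this Hessian is the pullback of the metric $g$ on $\nu_x S$, so it is positive definite.

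Away from $S$ the ratio $|y|^2/\rho^2$ is continuous because $\rho>0$ there. Near $S$, the ratio is
\[
\frac{|y|^2}{\sum_{i,j} a_{ij}y_iy_j}\le \frac{1}{\lambda_{\min}(a(x,y))},
\]
where $\lambda_{\min}(a(x,y))$ is the smallest eigenvalue of $a(x,y)$. Since $a(x,0)$ is positive definite, this bound is continuous and finite on a neighborhood of $\{y=0\}$.

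To produce a single continuous $\nu$ on all of $U$, I would set
\[
\nu^2=\max\bigl(1/\lambda_{\min}(a),\,|y|^2/\rho^2\bigr)
\]
on suitable overlapping regions and glue the pieces with a partition of unity. Alternatively, I would shrink $U$ to the part of the chart lying in the tubular neighborhood where $a$ stays positive definite. I expect the delicate point to be exactly this step: making sure the bounds are genuinely continuous on the whole chart domain. This is also why the lemma is phrased with continuous functions rather than constants.
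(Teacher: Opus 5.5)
Your argument is correct, but it follows a different route from the paper.

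\textbf{The paper's route.} The paper uses no metric. It covers $S$ by a locally finite family of slice charts $U_\alpha$, each relatively compact in a slice chart $O_\alpha$. It sets $\rho^2=\sum_\alpha\phi_\alpha|Y_\alpha|^2$ for a subordinate partition of unity and reads off both inequalities from the transition maps, writing $Y_\alpha=\mu_\alpha y$ and $y=\nu_\alpha Y_\alpha$. There the lower bound $|y|\le\nu\rho$ comes for free from the sum-of-squares structure, with cutoffs $\psi_\alpha$ making $\nu=\sup_\alpha\psi_\alpha|\nu_\alpha|$ continuous. No Hessian argument is needed.

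\textbf{Your route.} Your $\rho=|E^{-1}(\cdot)|_g$ is more canonical: near $S$ it is the distance to $S$, and its normal Hessian is visibly twice the metric on the normal bundle. The price is the tubular neighborhood theorem plus a nondegeneracy argument for the lower bound.

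\textbf{Two points to tighten.} First, the quadratic Hadamard form $f=\sum_{i,j}a_{ij}y_iy_j$ needs $\partial_y f=0$ on $\{y=0\}$. This holds because $f\ge0$ attains its minimum there.

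Second, shrinking $U$ is not allowed, since the statement concerns every slice chart, so use the gluing version. The local Hadamard matrices patch under a partition of unity, because the identity $f=y^{T}ay$ is linear in $a$. More simply, the ratios $|y|/\rho$ and $\rho/|y|$ (set to $0$ on $S$) are locally bounded on $U\cap N$. Any locally bounded function $q$ has a continuous majorant $\sum_i\psi_iM_i$, where $q\le M_i$ on $\supp\psi_i$.

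The paper's claim that $\mu_\alpha$ is continuous on all of $U_\alpha\cap U$ implicitly needs the same local-to-global patching. So this step is not a weakness peculiar to your approach.
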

\begin{proof}
    Take a locally finite cover of $S$ by slice coordinate charts $ U_\alpha$. Assume each $U_\alpha$ is relatively compact in a slice coordinate chart $O_\alpha$, such that $\{O_\alpha\}$ is also locally finite.
    On each $ U_\alpha$ define $\rho_\alpha^2=|Y_\alpha|^2$ where $(X_\alpha,Y_\alpha)$ are the coordinates on $U_\alpha$. Then take a partition of unity $\{\phi_\alpha\}$ subordinate to $\{U_\alpha\}$, and define $\rho^2=\sum\phi_\alpha\rho_\alpha^2 $.

    Now let $ U$ be any slice chart. The transition maps $\varphi_{\alpha}$ between $ U, O_\alpha$ are of the form
$$\varphi_{\alpha}(x,y)=(X_\alpha(x,y),Y_\alpha(x,y))$$
where $Y_\alpha(x,0)=0$, so $Y_\alpha=\mu_\alpha y$ for some continuous matrix-valued function $\mu_\alpha$ on $U_\alpha\cap U$. By symmetry also $y=\nu_\alpha Y_\alpha$. This gives
\[
\rho^2=\sum \phi_\alpha \left|Y_\alpha\right|^2\le\sum\phi_\alpha |\mu_\alpha|^2|y|^2=\mu^2|y|^2
\]
where $\mu^2:=\sum\phi_\alpha|\mu_\alpha|^2$. Also, taking cutoff functions $\psi_\alpha$ such that $\psi_\alpha|_{U_\alpha} \equiv 1$ and $\supp\psi_\alpha \subset O_\alpha$, we have
\[
|y|^2=\sum\phi_\alpha|y|^2\le\sum\phi_\alpha|\nu_\alpha|^2|Y_\alpha|^2=\sum\phi_\alpha\psi_\alpha^2|\nu_\alpha|^2|Y_\alpha|^2
\]
and defining $\nu=\sup_\alpha\psi_\alpha|\nu_\alpha|$, we get
\[
|y|^2\le \sum\phi_\alpha\nu^2|Y_\alpha|^2=\nu^2\rho^2.
\]
Observe that $\nu$ is continuous by local finiteness of $\{O_\alpha\}$ and the fact that $\psi_\alpha|\nu_\alpha|$ is continuous on $U$ and supported in $U\cap O_\alpha$.
\end{proof}
\begin{rem}\label{rem:rho}
     In particular, if $S$ is compact and we cover it by a finite collection of slice coordinate charts $U_j$, assuming each of them is relatively compact in a larger coordinate chart, we get a constant $B>0$ such that
    \[
B^{-1}|y|^2\le \rho^2(x,y)\le B|y|^2
\]
in each $U_j$.
\end{rem}
\begin{df}\label{df:Zr}
    For a neighborhood $U$ of $S$ and $r>0$, define
    \[
    Z_{r,U}=\{z\in U:\rho(z)<r\}.
    \]
    when the set $U$ is clear from the context, we will write simply $Z_r$.
\end{df}
\begin{lm}\label{lm:Zrclosed}
    If $S$ is compact then we can find a neighborhood $U$ of $S$ such that for small enough $r$, $\overline Z_{r}\subset U$.
\end{lm}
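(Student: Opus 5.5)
We need to prove: if $S$ is compact, there is a neighborhood $U$ of $S$ such that $\overline Z_{r}\subset U$ for all small enough $r$. The idea is to exploit that $\rho$ vanishes exactly on $S$ (by Lemma~\ref{lm:rho}) and is continuous. The closure $\overline Z_r$ is taken in $X$ (or in the ambient manifold $M$), so the only danger is that points of $Z_r$ accumulate at the boundary of the domain of $\rho$. We will shrink the domain to a relatively compact neighborhood and use compactness to separate $S$ from that boundary by a positive lower bound on $\rho$.

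Concretely, we cover $S$ by finitely many slice charts $U_j$, as in Remark~\ref{rem:rho}, each relatively compact in a larger slice chart on which $\rho$ is defined. Let $U'=\bigcup_j U_j$. We choose an open $U$ with $S\subset U$ and $\overline U\subset U'$ compact; such a $U$ exists by compactness of $S$ and local compactness of $M$. On $U'$, Remark~\ref{rem:rho} gives $\rho^2\ge B^{-1}|y|^2$ chartwise. Hence $\rho$ is continuous on $\overline{U'}$ and vanishes only on $S$: in slice coordinates $\rho=0$ forces $y=0$, that is, the point lies in $S$. The set $K=\overline U\setminus U$, the boundary of $U$, is compact and disjoint from $S$, so $\delta:=\min_K\rho>0$ (if $K$ is empty there is nothing to prove).

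Now fix $r<\delta$ and let $z\in\overline{Z_{r,U}}$. Then $z\in\overline U$, and by continuity of $\rho$ on $\overline U$ we get $\rho(z)\le r<\delta$, so $z\notin K$. Therefore $z\in U$, which gives $\overline Z_r\subset U$; moreover $\overline Z_r\subset\{\rho\le r\}\cap U$.

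The only point requiring care is ensuring that $\rho$ is defined and continuous on a neighborhood of $\overline U$, so that $\rho$ makes sense on $K$ and is continuous up to the boundary. This is why we take $U$ with compact closure inside $U'$, which in turn lies in the domain from Lemma~\ref{lm:rho}. We must also check that $\rho$ does not vanish off $S$ on $K$; this is exactly the lower bound $|y|\le\nu\rho$ from Lemma~\ref{lm:rho}, used near each point of $K$ via some slice chart containing it.
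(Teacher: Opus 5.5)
Your proof is correct and follows essentially the same route as the paper: restrict to a neighborhood $U$ with compact closure inside the domain of $\rho$, then use a positive lower bound for $\rho$ on a compact set disjoint from $S$. The only difference is cosmetic: the paper minimizes $\rho$ over $\overline U\setminus V$ for an auxiliary $V$ with $\overline V\subset U$ and shows $Z_r\subseteq V$, whereas you minimize over the boundary $\overline U\setminus U$ and use continuity of $\rho$ up to the boundary. You also make explicit, via $|y|\le\nu\rho$, why $\rho$ vanishes only on $S$, which the paper leaves implicit.
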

\begin{proof}
    Let $O$ be the domain of definition of $\rho$, and take $U$ to be relatively compact in $O$. Let also $V$ be a neighborhood of $S$ with $\overline V\subset U$. Then $\rho$ does not vanish on the compact set $\overline U\setminus V$, so it has a minimum $R$ there.
    Then for $r<R$ we have $Z_r\subseteq V$, so $\overline Z_r\subseteq \overline V\subset U$.
\end{proof}
\begin{lm}\label{lm:Jcoor}
    Let $L$ be a totally real submanifold of an almost complex manifold $\left(M,J\right)$.
Then locally around each point $p\in L$, there is a smooth coordinate
chart $\phi:U\to M$ where $U\subseteq\mathbb{C}^{n}$, such that
$\phi^{-1}(L)=U\cap\mathbb{R}^{n}$ and $\left(\phi^{*}J\right)|_{\mathbb{R}^{n}}=J_{0}$, where $J_0$
is the standard almost complex structure on $\mathbb{C}^{n}$.
Moreover, such coordinates can be found to extend any given coordinates
$\psi:V\to L$ where $V\subseteq\mathbb{R}^{n}$.
\end{lm}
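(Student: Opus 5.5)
Fix $p\in L$ and a chart $\psi:V\to L$ with $V\subseteq\R^n$ and $\psi(0)=p$; we will extend it off $L$ by flowing in the $J$-rotated directions. Since $L$ is totally real of dimension $n$ in the $2n$-dimensional manifold $M$, we have $T_qL\cap J_qT_qL=0$ for $q\in L$, so $T_qM=T_qL\oplus J_qT_qL$. Write $E_j=\psi_*\partial_{x_j}$, a frame of $TL$ along $\psi(V)$. Choose any Riemannian metric on $M$ with exponential map $\exp$, and define
\[
\phi(x+\i y)=\exp_{\psi(x)}\Big(\sum_{j} y_j J E_j(\psi(x))\Big),\qquad x\in V,\ y\in\R^n \text{ small}.
\]
By construction $\phi(x)=\psi(x)$ for real $x$, so $\phi$ extends $\psi$.

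Next we compute $d\phi$ along $\R^n$. At a point $(x,0)$, since $d\exp_q$ is the identity at the origin, we get $d\phi(\partial_{x_j})=E_j$ and $d\phi(\partial_{y_j})=JE_j$. These $2n$ vectors span $T_{\psi(x)}M$ by total reality, so $d\phi$ is invertible along $V\times\{0\}$. By the inverse function theorem, after shrinking, $\phi$ is a diffeomorphism from a neighborhood $U$ of $V'\times\{0\}$ onto an open set, for any relatively compact $V'\subset V$. The injectivity needed away from the real slice is obtained the usual tubular-neighborhood way: injectivity holds on $V'\times\{0\}$ and the differential is invertible there, so it persists on a sufficiently thin neighborhood.

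It remains to check that $U$ is a slice chart with the right almost complex structure. Shrinking $U$ so that $\phi(U)\cap L=\psi(V')$, which is possible since $\psi(V')$ is an embedded piece of $L$, we get $\phi^{-1}(L)=U\cap\R^n$. Finally,
\[
(\phi^*J)\partial_{x_j}=d\phi^{-1}(JE_j)=\partial_{y_j},\qquad (\phi^*J)\partial_{y_j}=d\phi^{-1}(J^2E_j)=-\partial_{x_j}
\]
on $\R^n$, which is exactly $J_0$. No integrability of $J$ is needed, since we only match $J$ at points of $\R^n$.

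The only mildly delicate step is the injectivity and shrinking in the inverse function argument, and it is standard. Note that the conclusion holds only locally near $p$, which is all the statement requires.
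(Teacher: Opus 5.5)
Your proposal is correct and follows the paper's proof: your map $\phi(x+\i y)=\exp_{\psi(x)}\bigl(\sum_j y_j JE_j\bigr)$ is exactly the paper's $\exp\circ\varphi$ with $\varphi(x,y)=(\psi(x),J_{\psi(x)}d\psi_x y)$, and your check that $d\phi$ sends $\partial_{x_j},\partial_{y_j}$ to $E_j,JE_j$, so that $\phi^*J=J_0$ along $\R^n$, is the same computation. The only difference is how injectivity and the slice property are obtained. The paper uses a tubular neighborhood $N_\epsilon$ of variable radius, so the chart extends $\psi$ on all of $V$; you shrink to a relatively compact $V'$, which gives a chart extending only $\psi|_{V'}$ but is enough for the local uses of the lemma in the paper.
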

\begin{proof}
Take a Riemannian metric $g$ on $M$. Use the exponential map $\exp:JTL\to M$ to define a tubular neighborhood $U$ of $L$, as the image under $\exp$ of $N_\epsilon=\{u\in JTL:|u|<\epsilon(\pi(u))\}$ for a small enough continuous map $\epsilon:L\to\R_{>0}$.

We define an almost complex structure $J_\epsilon=\exp^*J$ on $N_\epsilon$, for which $L$ (viewed as the zero section) is totally real. Under the identification $TN_\epsilon|_L\cong TL\oplus JTL=TM|_L$, it is known that $d\exp_{(x,0)}:T_{(x,0)}N_\epsilon\to T_xM $ is the identity, so for $x\in L$ we have
\[
J_\epsilon|_{(x,0)}=\left(d\exp_{(x,0)}\right)^{-1}J_xd\exp_{(x,0)}=J_x.
\]

Now, the coordinates $\psi$ on $L$ induce coordinates $\varphi:V\times\R^n\to JTL $ by
\[
\varphi(x,y)=\left(\psi(x),J_{\psi(x)}d\psi_xy\right)
\]
and along $V\times\{0\}$ the differential is $d\varphi_{(x,0)}=d\psi_x\oplus J_{\psi(x)} d\psi_x $. Hence we have
\begin{align*}
    J_\epsilon|_{\varphi(x,0)} d\varphi_{(x,0)}\frac{\partial}{\partial x^i}=J_{\psi(x)} d\psi_x \frac{\partial}{\partial x^i}=d\varphi_{(x,0)}\frac{\partial}{\partial y^i}
\end{align*}
so $\varphi^*J_\epsilon=J_0$ along $V\times\{0\}$. Hence the desired coordinates are $\phi:=\exp\circ \varphi$ on $U=\varphi^{-1}(N_\epsilon)\subseteq \C^n$.
\end{proof}
If $S\subseteq M$ is a submanifold, then a function $f$ on $M$ is said
to vanish to $N$-th order on $S$ if in any local coordinates
we have $\frac{\partial^\alpha}{\partial x^\alpha}f=0$ on $S$ for any $\left|\alpha\right|\le N$,
or equivalently $f=O\left(|y|^{N+1}\right)$ in slice coordinates, which is the same as $f=O(\rho^{N+1})$ for the function $\rho$ given by Lemma~\ref{lm:rho}. 

A section $u$ of a vector bundle $E$ is said to vanish along $S$ to $N$-th order if when written as $u=\sum u^js_j$ for a local frame $s=(s_1,...,s_k)$, the coefficient functions $u^1,...,u^k$ vanish to $N$-th order along $S$. It is enough to check on just one local frame around each point, as the property of vanishing to $N$-th order is preserved by multiplying by a smooth function. We denote the coefficient vector $\left(u^1,...,u^k\right)$ by $us^{-1}$.

The following lemma is a small modification of a result of Nirenberg-Wells~\cite{NiW69}.
\begin{lm}\label{lm:FunExt}
    Let $X$ be a complex manifold and let $\Gamma\subset X$ be a totally real submanifold which is a closed subset. Then for every smooth function $f:\Gamma\to\C$ and every $N\in \N$ there is an extension $\tilde f:X\to \C$ such that $\overline\partial \tilde f$ vanishes to $N$-th order along $\Gamma$.

    If $V\subseteq X$ is open such that $\supp f\subseteq V\cap \Gamma$, then we can assume $\supp\tilde f\subseteq V$.
\end{lm}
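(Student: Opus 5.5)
The plan is to build $\tilde f$ by the classical method: choose local coordinates in which $\Gamma$ sits as the real slice, take a formal Taylor expansion in the imaginary directions, and then patch the local extensions with a partition of unity.

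\textbf{Local extension.} By Lemma~\ref{lm:Jcoor}, or directly since $X$ is complex, take coordinates $z=x+\i y$ near a point $p\in\Gamma$. Here $\Gamma$ corresponds to $\R^k\times\{0\}$ in suitable real coordinates $(x,y)$, with $y$ the normal directions, and $J$ equals $J_0$ along $\Gamma$. In the complex case one may instead use holomorphic coordinates in which $T\Gamma$ is totally real. It is cleanest to first treat the case $\dim_\R\Gamma=n$ with holomorphic coordinates $w$ such that $\Gamma$ is a graph $y=g(x)$ over $\R^n$ with $g(0)=0$, $dg(0)=0$. I would then define
\[
\tilde f_p(x,y)=\sum_{|\alpha|\le N}\frac{(\i y)^\alpha}{\alpha!}\,\partial_x^\alpha F(x),
\]
after straightening $\Gamma$ to $\R^n$ by a diffeomorphism, which is not holomorphic but agrees with $J_0$ to the needed order after correction. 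Alternatively, I would construct the coefficients $a_\alpha(x)$ in $\tilde f_p=\sum_{|\alpha|\le N+1}a_\alpha(x)y^\alpha$ inductively. The condition $\overline\partial\tilde f_p=O(|y|^{N+1})$ becomes, order by order in $y$, a linear equation that determines $a_\alpha$ for $|\alpha|=j+1$ from the lower-order coefficients. This is possible because $J|_\Gamma$ maps $T\Gamma$ onto a complement of $T\Gamma$, so the $y$-derivatives are prescribed by the $x$-derivatives. This is exactly Nirenberg--Wells~\cite[Lemma~3.1]{NiW69}. If $\dim\Gamma<n$, I extend $\Gamma$ locally to a totally real $n$-manifold and extend $f$ arbitrarily to it first.

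\textbf{Patching.} Cover $\Gamma$ by such charts $U_j$, locally finite since $\Gamma$ is closed, and take a partition of unity $\phi_j$ on a neighborhood of $\Gamma$ subordinate to $\{U_j\}$. Set $\tilde f=\sum_j\phi_j\tilde f_j$, multiplied by a cutoff equal to $1$ near $\Gamma$. Then
\[
\overline\partial\tilde f=\sum_j\phi_j\overline\partial\tilde f_j+\sum_j(\tilde f_j-\tilde f_k)\overline\partial\phi_j
\]
on overlaps, using $\sum_j\overline\partial\phi_j=0$. The first sum vanishes to order $N$. For the second, the key point is uniqueness of the formal expansion: any two local solutions agreeing on $\Gamma$ with $\overline\partial$ vanishing to order $N$ have the same Taylor coefficients in $y$ up to order $N+1$, by the same inductive determination. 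Hence $\tilde f_j-\tilde f_k=O(\rho^{N+2})$. This uniqueness is the main point to verify carefully, and it follows from the triangular structure of the recursion.

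\textbf{Support.} If $\supp f\subseteq V\cap\Gamma$, I will use only charts, each either inside $V$ or disjoint from $\supp f$, taking $\tilde f_j=0$ on the latter. I will also choose the cutoff near $\Gamma$ to be supported in $V\cup(X\setminus\supp f)$ appropriately, which gives $\supp\tilde f\subseteq V$. The vanishing order is unaffected, since the cutoff is $1$ near $\Gamma$.
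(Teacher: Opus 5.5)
Your argument is correct for maximal totally real $\Gamma$, but your globalization differs from the paper's. This is the only case needed here, and the only case the paper's proof covers, via Lemma~\ref{lm:Jcoor}.

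\textbf{Local step.} This is the same in both proofs: Nirenberg--Wells~\cite[Lemma~3.1]{NiW69} in a chart where $\Gamma=\R^n$ and $J=J_0$ along $\Gamma$.

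\textbf{The paper's globalization.} The paper never compares two local extensions. It first gets support control locally. It multiplies an extension $\hat f$ by a bump $\varphi$ supported in $V$ and equal to $1$ near $\supp f$. The term $\hat f\,\overline\partial\varphi$ vanishes to order $N$ because $\hat f$ itself does along $\Gamma\setminus\supp f$. This is proved by induction on the number of $y$-derivatives, starting from $\overline\partial\hat f=c_i(dy^i+\i J dy^i)$. It then cuts $f$ on $\Gamma$ by a partition of unity, extends each $\varphi_\alpha f$ with support in $U_\alpha\cap V$, and sums. Since $\overline\partial$ is additive, no cross terms arise.

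\textbf{Your globalization.} You extend $f$ itself in each chart and glue with a partition of unity on $X$. This produces cross terms $(\tilde f_j-\tilde f_k)\overline\partial\phi_j$. You kill them by jet uniqueness: $g|_\Gamma=0$ and $\overline\partial g$ vanishing to order $N$ force $g$ to vanish to order $N+1$. That is true, and it is exactly the paper's $y$-derivative induction, run up to order $N+1$. So both proofs rest on the same fact; the paper uses it once, and you use it on every overlap.

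\textbf{What your version buys.} It gives the conceptual statement that the $N$-jet of an almost-holomorphic extension along $\Gamma$ is canonical, so gluing is automatic. Your support argument also works: you use charts either inside $V$ or disjoint from the closed set $\supp f$, with $\tilde f_j=0$ on the latter. It again relies only on this uniqueness, so the extra condition you impose on the cutoff is unnecessary.

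\textbf{Two caveats.} First, jet uniqueness fails if $\dim_\R\Gamma<n$: take $g=z_2$ and $\Gamma=\R\times\{0\}\subset\C^2$. So your reduction for lower-dimensional $\Gamma$ (thicken locally, extend $f$ arbitrarily) is incompatible with your gluing, since extensions built from different thickenings need not agree beyond order zero. The paper's gluing requires no compatibility between local pieces, and it is the one that adapts to that case.

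Second, the explicit formula $\sum_{|\alpha|\le N}\frac{(\i y)^\alpha}{\alpha!}\partial_x^\alpha F$ after a non-holomorphic straightening only gives $\overline\partial\tilde f=O(|y|)$, because $J-J_0=O(|y|)$ off $\Gamma$. The local step genuinely needs the Nirenberg--Wells lemma, as you end up saying.
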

\begin{proof}
    Start with the case where there is a global slice chart on $X$ of the type introduced by Lemma~\ref{lm:Jcoor}. 
    By applying \cite[Lemma 3.1]{NiW69} with the defining functions the imaginary coordinates $y^1,...,y^n$, there is an extension $\hat f$ to a neighborhood $U$ of $\Gamma$ in $X$ with $\overline\partial\hat f$ vanishing to $N$-th order. Multiplying by a suitable cut-off function gives a global extension.
    
We modify $\hat f$ to obtain $\tilde f$ with the desired support. Assume $\supp f\subseteq V\cap \Gamma$, and let $\hat f$ be any extension with $\overline\partial \hat f$ vanishing to $N$-th order.
    Let $0\le \varphi\le 1$ be a smooth bump function supported in $V$ which equals $1$ on a neighborhood of $\supp f$ in $V$, and define $\tilde f=\varphi\hat f$. Then $\tilde f$ is an extension of $f$ which is supported in $V$. We claim that $\overline\partial \tilde f$ vanishes to $N$-th order along $\Gamma$. Indeed, we have
\[
\overline\partial {\tilde f}=\overline\partial \varphi\cdot \hat f+\varphi\overline\partial\hat f.
\]
The second summand vanishes to $N$-th order along $\Gamma$, so we need to check for the first. Since $\varphi\equiv1$ on a neighborhood of $\supp f$, it is left to check that the term vanishes to $N$-th order along $\Gamma\setminus\supp f$. It will suffice to show that $\hat f$ vanishes to $N$-th order there.

Indeed, $\hat f$ vanishes to $0$-th order and $\overline \partial \hat f$ vanishes to $N$-th order along $\Gamma\setminus\supp f$. Working in the smooth slice coordinates such that $J=J_0$ along $\Gamma$, we have
\[
\overline\partial\hat f=\frac{1}{2}\frac{\partial \hat f}{\partial x^i}\left(dx^i+\i Jdx^i\right)+\frac{1}{2}\frac{\partial \hat f}{\partial y^i}\left(dy^i+\i Jdy^i\right).
\]
Along $\Gamma$ the vectors $dy^i+\i Jdy^i=dy^i+\i dx^i$ form a basis for $T^{0,1}X$, so the vectors $dy^i+\i Jdy^i$ form a basis also on a neighborhood of $\Gamma$. Hence their coefficients in $\overline\partial\hat f$ vanish to $N$-th order along $\Gamma$. We can write
\[
dx^j+\i Jdx^j=a_i^j\left(dy^i+\i Jdy^i\right),
\]
and defining $c_i=\frac{1}{2}\frac{\partial \hat f}{\partial y^i}+ a_i^j\frac{\partial \hat f}{\partial x^j}$ we have
\[
\overline\partial\hat f=c_i\left(dy^i+\i Jdy^i\right),
\]
so $c_i$ vanishes to $N$-th order along $\Gamma$ for each $i$.

Now we prove that a derivative of $\hat f$ up to order $N$ vanishes along $\Gamma\setminus\supp f$. The proof proceeds by induction on the number $k$ of derivatives in the $y$-directions.
For $k=0$ this follows from the fact that $\hat f\equiv0$ on $\Gamma\setminus\supp f$, so differentiating using only $x$-coordinates gives zero.
Suppose the claim is true for $0\le k<\ell\le N$, let $\alpha=(\alpha_1,...,\alpha_n)$ be a multi-index with $|\alpha|=\ell$, and let $\beta$ be any multi-index with $m=|\beta|\le N-\ell$. Without loss of generality we may assume $\alpha_1>0$, and define $\alpha'_i=\alpha_i-\delta_{i1}$. Then
\begin{align*}
    \frac{\partial^{m+\ell}\hat f}{\partial x^\beta\partial y^\alpha}=\frac{\partial^{m+\ell-1}}{\partial x^\beta\partial y^{\alpha'}}\frac{\partial\hat f}{\partial y^1}=\frac{\partial^{m+\ell-1}}{\partial x^\beta\partial y^{\alpha'}}c_1-\frac{\partial^{m+\ell-1}}{\partial x^\beta\partial y^{\alpha'}}\left(a_1^j\frac{\partial \hat f}{\partial x^j}\right).
\end{align*}
The first summand vanishes along $\Gamma\setminus\supp f$ since $c_1$ vanishes to $N$-th order along $\Gamma$. Applying the Leibniz rule to the second summand, each term contains a derivative of $\hat f$ of order up to $N$ with less than $\ell$ derivatives in the $y$-directions. Hence it vanishes along $\Gamma\setminus\supp f$ by the induction hypothesis.

For the general case, cover $\Gamma$ by slice coordinate charts $\{U_\alpha\}$ of the type introduced by Lemma~\ref{lm:Jcoor}, and let $\{\varphi_\alpha\}$ be a partition of unity subordinate to that cover. In each $U_\alpha$, let $\tilde f_\alpha$ be an extension of $\varphi_\alpha f$ supported in $\supp \varphi_\alpha\cap V$ with $\overline\partial \tilde f_\alpha$ vanishing to $N$-th order. Then $\tilde f:=\sum \tilde f_\alpha$ is defined in $U=\bigcup U_\alpha$ and supported in $V,$ and $\overline\partial \tilde f$ vanishes to $N$-th order. Multiplying by a suitable cut-off function gives a globally defined extension with the desired properties.

\end{proof}
If $f$ is a function on a manifold $M$ and $S\subset M$ is a critical submanifold - that is, $df$ vanishes along $S$ - then the normal Hessian is a canonical map $\hess f:NS\otimes NS\to \underline{\R} $ where $NS=TM|_S/TS$ is the normal bundle.
\begin{lm}\label{lm:BSextpara}
    Let $E$ be a holomorphic vector bundle over a complex manifold $X$,
and let $\Gamma\subset X$ be a totally real submanifold which is a closed subset. Let $\xi$
be a section of $E$ over $\Gamma$. Then for every $N$ there is
a section $\tilde{\xi}$ of $E$ over $X$,
such that $\overline{\partial}\tilde{\xi}$ vanishes along $\Gamma$
to $N$-th order. 
If $V\subseteq X$ is open such that $\supp \xi\subset V\cap\Gamma$, then we may assume $\supp\tilde \xi\subset V$.

Suppose moreover that $E$ is equipped with a Hermitian metric $h$, such that $\i F_h$
is positive along $\Gamma$ and $\xi$ is unitary parallel with respect to the Chern connection. If $N\ge 1$ then $\Gamma$ is a critical submanifold for $a:=\big|\tilde{\xi}\big|_h$
and the normal Hessian $\hess a$ is negative-definite at each point. In particular $a\le 1$ in a neighborhood of $\Gamma$.
\end{lm}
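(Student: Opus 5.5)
The plan has two parts: build the extension chartwise and glue with a partition of unity, then compute the first and second derivatives of $a^2=|\tilde\xi|_h^2$ along $\Gamma$.

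\textbf{Existence and support.} Cover $\Gamma$ by open sets $U_\alpha$ on which $E$ has a holomorphic frame $s_1,\dots,s_k$, and choose a partition of unity $\{\varphi_\alpha\}$ on $\Gamma$ subordinate to this cover. On $U_\alpha$ write $\varphi_\alpha\xi=\sum_j f^j_\alpha s_j$ with $f^j_\alpha\in C^\infty(\Gamma\cap U_\alpha)$ compactly supported in $\supp\varphi_\alpha\cap V$. Apply Lemma~\ref{lm:FunExt} to each $f^j_\alpha$, working in $U_\alpha$ where $\Gamma\cap U_\alpha$ is closed, with support contained in $U_\alpha\cap V$. This gives extensions $\tilde f^j_\alpha$ whose $\overline\partial$ vanishes to $N$-th order. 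Since the frame is holomorphic, $\overline\partial(\sum_j\tilde f^j_\alpha s_j)=\sum_j(\overline\partial\tilde f^j_\alpha)s_j$ also vanishes to $N$-th order. Extend each of these local sections by zero and sum over $\alpha$; the supports must be chosen so that this is locally finite. A final cutoff equal to $1$ near $\Gamma$ does not affect vanishing along $\Gamma$ and keeps the support in $V$.

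\textbf{Critical submanifold.} For $x\in\Gamma$, write $T_xX=T_x\Gamma\oplus JT_x\Gamma$; this is possible because $\Gamma$ is totally real of maximal dimension, and we are in the Lagrangian/Bohr--Sommerfeld setting. Work with $a^2=h(\tilde\xi,\tilde\xi)$, which is smooth. Then
\[
d(a^2)=2\re h(D\tilde\xi,\tilde\xi).
\]
In tangent directions $D\tilde\xi=D\xi=0$. Since $\overline\partial\tilde\xi=0$ along $\Gamma$ (this uses $N\ge0$), $D\tilde\xi$ is complex linear at $x$, so
\[
D_{Jv}\tilde\xi=\i D_v\tilde\xi=0,\qquad v\in T_x\Gamma.
\]
Hence $D\tilde\xi=0$ along $\Gamma$. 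Therefore $d(a^2)=0$, and since $a=1\neq0$ on $\Gamma$, also $da=0$ there.

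\textbf{Hessian.} Since $a^2$ has a critical point along $\Gamma$ with value $1$, we have $\hess a=\tfrac12\hess(a^2)$ there. Differentiate $d(a^2)=2\re h(D\tilde\xi,\tilde\xi)$ once more and evaluate at $x$. The term involving $h(D\tilde\xi,D\tilde\xi)$ vanishes because $D\tilde\xi=0$ there, so
\[
\mathrm{Hess}(a^2)(u,w)=2\re h(D_uD_w\tilde\xi,\tilde\xi),
\]
which is well defined since $D\tilde\xi$ vanishes on $\Gamma$. Take $u=w=Jv$ with $v\in T_x\Gamma$ and extend $v$ to a vector field $V$ tangent to $\Gamma$.

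\textbf{The key step} is the use of $N\ge1$. Because $\overline\partial\tilde\xi$ vanishes to first order, the operator $D^{0,1}\tilde\xi$ has vanishing derivative at $x$. Concretely, this gives
\[
D_{JV}\tilde\xi=\i D_V\tilde\xi+O(\rho^2).
\]
Differentiating this relation in the direction $JV$ at $x$, one expects the identity
\[
D_{Jv}D_{JV}\tilde\xi=\i D_{Jv}D_V\tilde\xi=\i\bigl(D_VD_{Jv}\tilde\xi+F_h(Jv,v)\tilde\xi\bigr)
\]
up to bracket terms. The bracket terms vanish because $D\tilde\xi=0$ at $x$. Next, $D_VD_{J\cdot}\tilde\xi=0$ at $x$, since $D\tilde\xi\equiv0$ along $\Gamma$ and $V$ is tangent to $\Gamma$. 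This leaves
\[
D_{Jv}D_{Jv}\tilde\xi=\i F_h(Jv,v)\tilde\xi=-\i F_h(v,Jv)\tilde\xi.
\]
Hence
\[
\mathrm{Hess}(a^2)(Jv,Jv)=-2\,\i F_h(v,Jv)\,|\xi|^2<0
\]
by positivity of $\i F_h$ along $\Gamma$. Since $JT_x\Gamma$ projects isomorphically onto the normal space, the normal Hessian is negative definite. The final claim $a\le1$ near $\Gamma$ then follows from a Morse--Bott type Taylor expansion in the normal directions.

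\textbf{Main obstacle.} I expect the delicate step to be handling signs and curvature conventions correctly in the commutation $D_{Jv}D_V-D_VD_{Jv}=F_h(Jv,v)$, and justifying the bracket terms carefully. To do this cleanly, I would work in the slice coordinates of Lemma~\ref{lm:Jcoor}, take $V=\partial_x$ and $Jv=\partial_y$, and use a unitary frame equal to $\xi$ on $\Gamma$. I would then check the expansion of $\tilde\xi$ to second order in $y$ directly.
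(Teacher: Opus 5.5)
Your proposal is correct and follows the paper's proof essentially step by step. You extend chartwise via Lemma~\ref{lm:FunExt} in holomorphic frames and glue with a partition of unity. You then use $D_{J\theta}\tilde\xi=\i D_\theta\tilde\xi-2\i\,\overline\partial\tilde\xi(\theta_{0,1})$ to get $D\tilde\xi|_\Gamma=0$, and use the curvature commutator together with first-order vanishing of $\overline\partial\tilde\xi$ to obtain $\hess(a^2)([J\theta],[J\theta])=-2\re\langle \i F_h(\theta,J\theta)\xi,\xi\rangle<0$. The only difference is that you dispose of the bracket term $D_{[J\theta,\theta]}\tilde\xi$ using $D\tilde\xi|_\Gamma=0$ instead of the paper's special extension with $[\theta,J\theta]|_\Gamma=0$, which is a harmless simplification; for a genuine vector bundle, your final expression should read $-2\re h(\i F_h(v,Jv)\xi,\xi)$ rather than involve $|\xi|^2$.
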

\begin{proof}
    Let $s_{\alpha}=(s_{\alpha,1},...,s_{\alpha,k})$ be a holomorphic frame in $U_{\alpha}$, for a locally finite family
$\left\{ U_{\alpha}\right\} $ that covers $\Gamma$, and let $\left\{ \phi_{\alpha}\right\} $
be a partition of unity subordinate to $\left\{ U_{\alpha}\right\} $.
Define $\xi_{\alpha}=\phi_{\alpha}\xi$ and $f_\alpha=\xi_\alpha s_{\alpha}^{-1}:\Gamma\to \C^k$, that is, $\xi_\alpha=f_\alpha s_{\alpha}=\sum f_\alpha^js_{\alpha,j}$. Then $\supp f_\alpha\subset U_\alpha\cap\Gamma$.

Let us assume we are given an open set $V$ with $\supp \xi\subseteq V\cap \Gamma$, possibly $V=X$. Then $\supp f_\alpha\subseteq V\cap U_\alpha\cap\Gamma$.
By Lemma~\ref{lm:FunExt} we can extend $f_{\alpha}$
to a function $\tilde{f}_{\alpha}$ supported in $V\cap U_\alpha$
such that $\overline{\partial}\tilde{f}_{\alpha}$ vanishes along
$\Gamma$ to $N$-th order.

Define $\tilde{\xi}_{\alpha}=\tilde{f}_{\alpha}s_{\alpha}=\sum \tilde f_\alpha^js_{\alpha,j}$,
and define $\tilde{\xi}=\sum\tilde{\xi}_{\alpha}$. It follows that $\overline\partial{\tilde\xi}_\alpha=\overline\partial\tilde f_\alpha \cdot s_\alpha$ vanishes to $N$-th order, hence so does $\overline\partial\tilde\xi=\sum\overline\partial\tilde\xi_\alpha$. Also, since each $\tilde \xi_\alpha$ is supported in $V\cap U_\alpha\subseteq V$, it follows that $\tilde \xi$ is supported in $V$.

Now assume that $\xi$ is unitary and parallel with respect to some positive Hermitian metric on $E$. To prove the assertion, it suffices to show that $a^2$
has a critical submanifold at $\Gamma$ with negative-definite normal Hessian. For
this we compute the derivative and the normal Hessian using fibers of $JT\Gamma$ as representatives for $N\Gamma$.

Let $0\ne \theta\in T_{p}\Gamma$ at some point $p\in\Gamma$. First we show how to extend $\theta$ locally to a vector field on $X$, also denoted $\theta$, such that
\begin{equation}\label{eq:theta}
    [\theta,J\theta]|_\Gamma=0.
\end{equation}
Find coordinates for $\Gamma$ in which $\theta=\partial/\partial x^{1}|_p$, then use Lemma~\ref{lm:Jcoor} to extend these coordinates to $\mathbb{C}^{n}$-coordinates
on $X$, such that $J=J_{0}$ on $\Gamma$. Extend $\theta$ to a
neighborhood of $p$ in $X$ by $\theta=\partial/\partial x^{1}$,
and then $J\theta=\partial/\partial y^{1}$ along $\Gamma$.
For $f$ a function on $X$, and for any $q\in \Gamma $ where $\theta$ is defined, take a path $\gamma$ in $\Gamma$ with $\gamma(0)=q,\dot\gamma(0)=\partial/\partial x^1$. Then
\begin{align*}
    \left[\theta,J\theta\right]_qf&=\frac{\partial}{\partial x^{1}}\Big|_{q}((J\theta) f)-(J\theta)_q\frac{\partial}{\partial x^{1}}f\\
    &=\frac{d}{dt}\Big|_0(J\theta)_{\gamma(t)}f-\frac{\partial}{\partial y^1}\Big|_q\frac{\partial}{\partial x^1}f\\
    &=\frac{\partial}{\partial x^1}\Big|_q\frac{\partial}{\partial y^1}f-\frac{\partial}{\partial y^1}\Big|_q\frac{\partial}{\partial x^1}f=0,
\end{align*}
proving \eqref{eq:theta}.

Using the Chern connection on $E,$ we calculate,
\[
d\left(a^{2}\right)(J\theta)=\left\langle D_{J\theta}\tilde{\xi},\tilde{\xi}\right\rangle +\left\langle \tilde{\xi},D_{J\theta}\tilde{\xi}\right\rangle. 
\]
Decompose by type $\theta=\theta_{1,0}+\theta_{0,1}.$ Then 
\[
J\theta=\i\theta_{1,0}-\i\theta_{0,1}=\i\theta-2\i\theta_{0,1},
\]
and we have
\[
D_{J\theta}\tilde{\xi}=\i D_{\theta}\tilde{\xi}-2\i D_{\theta_{0,1}}\tilde{\xi}.
\]
But $D_{\theta_{0,1}}\tilde{\xi}|_\Gamma=D_{\theta_{0,1}}^{0,1}\tilde{\xi}=\overline{\partial}\tilde{\xi}(\theta_{0,1})=0$ along $\Gamma$ by definition of $\tilde\xi$, so
\begin{equation}\label{eq:DJthetaxi}
    \left(D_{J\theta}\tilde{\xi}\right)\Big|_{\Gamma}=\left(\i D_{\theta}\tilde{\xi}\right)\Big|_{\Gamma}=\i D_{\theta}\xi=0
\end{equation}
with the last equality because $\xi$ is parallel. Since $\theta$ was arbitrary and $a$ is constant on $\Gamma$ it follows that $d\left(a^2\right)=0$, so $\Gamma$ is a critical
submanifold for $a^{2}$.

Now we show that the normal Hessian is negative: Let $[J\theta]$ denote the equivalence class of $J\theta$ considered as a section of the quotient bundle $N\Gamma.$ First,
\begin{equation}\label{eq:Hess}
\mathclap{
    \begin{aligned}
\hess\left(a^2\right)\left([J\theta],[J\theta]\right)&=J\theta\left(J\theta\left(a^2\right)\right)\\
&   =J\theta\left(\left\langle D_{J\theta}\tilde{\xi},\tilde{\xi}\right\rangle +\left\langle \tilde{\xi},D_{J\theta}\tilde{\xi}\right\rangle \right)\\
 & =\left\langle D_{J\theta}D_{J\theta}\tilde{\xi},\tilde{\xi}\right\rangle +2\left\Vert D_{J\theta}\tilde{\xi}\right\Vert ^{2}+\left\langle \tilde{\xi},D_{J\theta}D_{J\theta}\tilde{\xi}\right\rangle\\
 & \overset{\mathclap{\eqref{eq:DJthetaxi}}}{=}2\re \left\langle D_{J\theta}D_{J\theta}\tilde{\xi},\tilde{\xi}\right\rangle
\end{aligned}}
\end{equation}
Now,
\[
D_{J\theta}\tilde{\xi}=\i D_{\theta}\tilde{\xi}-2\i D_{\theta_{0,1}}\tilde{\xi}=\i D_{\theta}\tilde{\xi}-2\i\overline{\partial}\tilde{\xi}(\theta_{0,1})
\]
and since $\overline{\partial}\tilde{\xi}$ vanishes to first order,
$D_{J\theta}\left(\overline{\partial}\tilde{\xi}(\theta_{0,1})\right)=0$
along $\Gamma$. Thus, along $\Gamma$ we get 
\begin{align*}
D_{J\theta}D_{J\theta}\tilde{\xi} & =D_{J\theta}\i D_{\theta}\tilde{\xi}=\i D_{J\theta}D_{\theta}\tilde{\xi}\\
 & =\i D_{\theta}D_{J\theta}\tilde{\xi}+\i D_{[J\theta,\theta]}\tilde{\xi}+\i F_h\left(J\theta,\theta\right)\tilde{\xi}\\
 & \overset{\mathclap{\eqref{eq:theta},\eqref{eq:DJthetaxi}}}{=}-\i F_h\left(\theta,J\theta\right)\xi.
\end{align*}
By positivity of $\i F_h$ and \eqref{eq:Hess} we finally get
\begin{align*}
    \hess\left(a^2\right)\left([J\theta],[J\theta]\right)&=-2\re\left\langle\i \left(F_h\right)\left(\theta,J\theta\right)\xi,\xi\right\rangle<0.
\end{align*}
\end{proof}
Denote $R_m=\frac{\log m}{\sqrt m}$. Then we can achieve the following super-polynomial bound on $a$ as in the work of Tian~\cite[Lemma 1.2]{tian1990polarized}:
\begin{lm}\label{lm:a}
Under the conditions of the previous lemma and assuming in addition that $\Gamma$ is compact, we can find a neighborhood $U$ of $\Gamma$ such that $a^m\le m^{-\gamma\log m}$ in $Z_{2R_m,U}\setminus Z_{R_m,U}$, for $m$ large enough and some $\gamma>0$.
\end{lm}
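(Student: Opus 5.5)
The plan is to turn the negative-definite normal Hessian from Lemma~\ref{lm:BSextpara} into a quadratic upper bound $a\le 1-c\rho^2$ near $\Gamma$. Once that holds, we get $a^m\le \exp(-cm\rho^2)$ for $\rho$ small. On the annulus $Z_{2R_m}\setminus Z_{R_m}$ we have $\rho\ge R_m=\log m/\sqrt m$, so
\[
a^m\le \exp\left(-c\,m\,\frac{(\log m)^2}{m}\right)=m^{-c\log m},
\]
which gives the claim with $\gamma=c$.

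\emph{The quadratic bound.} Cover the compact $\Gamma$ by finitely many slice charts of the type in Lemma~\ref{lm:Jcoor}, each relatively compact in a larger chart, and let $\rho$ be as in Lemma~\ref{lm:rho} with the constant $B$ of Remark~\ref{rem:rho}. In such a chart write $a^2(x,y)$ with $y$ the normal coordinates. We know $a^2(x,0)=1$ and $\partial_y a^2(x,0)=0$, since $\Gamma$ is critical by Lemma~\ref{lm:BSextpara}. The normal Hessian $\partial_y^2 a^2(x,0)$ is negative definite there. By continuity and compactness of the closures of the chart pieces meeting $\Gamma$, it is bounded above by $-2c_0\,\mathrm{Id}$ uniformly in $x$, for some $c_0>0$. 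Taylor's theorem in $y$ with a uniform bound on third derivatives then gives $a^2\le 1-c_0|y|^2+C|y|^3\le 1-\tfrac{c_0}{2}|y|^2$ for $|y|\le\delta$. Using $|y|^2\ge B^{-1}\rho^2$, this becomes $a^2\le 1-c_1\rho^2$, hence $a\le 1-\tfrac{c_1}{2}\rho^2\le e^{-c\rho^2}$.

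\emph{Choosing $U$.} Take $U$ to be the union of the shrunken charts where $|y|<\delta$, shrunk further as in Lemma~\ref{lm:Zrclosed} so that $\overline Z_r\subset U$ for small $r$. Since $2R_m\to 0$, for $m$ large the annulus $Z_{2R_m,U}\setminus Z_{R_m,U}$ lies in the region where the bound holds. The main technical point is making the Hessian and Taylor constants uniform. This is handled by compactness of $\Gamma$ and the finite cover, and by noting that the normal Hessian in coordinates is $\partial_y^2a^2$, because the first derivatives vanish. The Hessian can be tested on $J\theta=\partial_{y^i}$ along $\Gamma$, as in the proof of Lemma~\ref{lm:BSextpara}.
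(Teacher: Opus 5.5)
Your proposal is correct and follows essentially the same route as the paper: a uniform quadratic Taylor bound coming from the negative-definite normal Hessian on a finite cover by slice charts, converted from $|y|^2$ to $\rho^2$ via Remark~\ref{rem:rho}, and then evaluated at $\rho\ge R_m$ to get $\exp(-\gamma m R_m^2)=m^{-\gamma\log m}$. The only difference is cosmetic. You expand $a^2$ and then use $\sqrt{1-t}\le 1-t/2$, whereas the paper expands $a$ directly, using that its normal Hessian is half that of $a^2$ along $\Gamma$.
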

\begin{proof}
   Take a cover $\{U_j\}$ of $\Gamma$ and a constant $B > 0$ as in Remark~\ref{rem:rho}, and define $U=\bigcup U_j$.
   By Lemma~\ref{lm:BSextpara}, $a$ has a strong local maximum along $\Gamma$. Let $\gamma >0$ be such that $-4\gamma B<0$ is the maximum value of the largest eigenvalue of $\hess a$ along $\Gamma$. Then for each $j$ and $z=(x,y)\in U_j$ we have 
\begin{align*}
a(z)&=1+\frac{1}{2}\left(\hess a\right)_{x}([y],[y])+O\left(|y|^{3}\right)\\
&\le1-2\gamma B|y|^{2}+O\left(|y|^{3}\right)\\
&\le1-\gamma B|y|^{2}\le 1-\gamma \rho^2(z)
\end{align*}
for small $y$, or equivalently, small $\rho$. Hence, for $z\in Z_{2R_m}\setminus Z_{R_m} $ we have
\begin{align*}
    a^m&\le \left(1-\gamma R_{m}^{2}\right)^{m}=\exp\left(m\log\left(1-\gamma R_{m}^{2}\right)\right)\\
    &\le\exp\left(-\gamma mR_{m}^{2}\right)=\exp\left(-\gamma\log^{2}m\right)\\
    &=m^{-\gamma\log m}
\end{align*}
\end{proof} 
Next we need two lemmas about the asymptotic rate of growth of certain constants. For the first, recall that if $\nabla$ is a connection on a vector bundle $E$ over a Riemannian manifold $(M,g)$, then the $k$-th covariant derivative $\nabla^{(k)}$ is defined inductively by $\nabla^{(k+1)}=\left(\nabla\otimes\tilde\nabla^{\otimes k}\right)\circ\nabla^{(k)}$ where $\tilde\nabla$ is the induced Levi-Civita connection on $T^*M$.

Recall also that if $E$ is a vector bundle equipped with a metric $h$ and any connection $\nabla$ over a compact Riemannian manifold $(M,g)$, then the $C^r$-topology on sections of $E$ can be defined either using the norm
\[
\|u\|_{C^r_{\text{int}}}=\sum_{i=0}^r \sup_M \left|\nabla^{(i)} u\right|_{h,g}
\]
or using a finite cover by coordinate charts $U_j$ with orthonormal frames $s_j$ on them, and by the norm
\[
\|u\|_{C^r_{\text{ext}}}=\max_j\left\|us_j^{-1} \right\|_{C^r(U_j)}=\max_j\sum_{|\alpha|\le r}\sup_{U_j}\left|\frac{\partial^\alpha}{\partial x^\alpha} \left(us_j^{-1}\right)\right|.
\]
Then we have:
\begin{lm}\label{lm:Cr}
    Let $E \to M$ be a vector bundle of rank $p.$ Assume $s_j$ are the restrictions of frames on coordinate charts $O_j$ such that $\overline U_j\subset O_j$. Then there is a constant $c>0$ such that if $u$ is a section of $E^{\otimes m}$, then
    \[
    \left(c(mp^m)^r\right)^{-1}\|u\|_{C^r_{\mathrm{ext}}}\le\|u\|_{C^r_{\mathrm{int}}}\le c(mp^m)^r\|u\|_{C^r_{\mathrm{ext}}}.
    \]
\end{lm}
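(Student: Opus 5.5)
We compare the two norms chart by chart, relating the covariant derivatives to coordinate derivatives of the coefficient vector. On $U_j$ write $\nabla s_{j,a}=\sum_b A_{j,a}^b\, s_{j,b}$, with $A_j$ a matrix of $1$-forms smooth on $O_j$. Hence $A_j$ and all its derivatives are bounded on $\overline U_j$, as are the Christoffel symbols of $g$ and the metric coefficients $g_{ik},g^{ik}$. On $E^{\otimes m}$ we use the induced frame $s_j^{\otimes}=\{s_{j,a_1}\otimes\cdots\otimes s_{j,a_m}\}$, which has $p^m$ elements and is orthonormal for the induced metric. Its connection matrix is $A_j^{(m)}=\sum_{l=1}^m \mathrm{id}\otimes\cdots\otimes A_j\otimes\cdots\otimes\mathrm{id}$, with $A_j$ in the $l$-th slot. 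Consequently every entry of $A^{(m)}_j$ and of each of its derivatives is bounded by $m$ times a constant independent of $m$. Each row has at most $mp$ nonzero entries.

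The main step is an induction on $k$ for the formula
\[
\nabla^{(k)}u=\sum_{|\alpha|\le k}\sum_{b} P_{\alpha}^{b}\,\partial^\alpha (u s_j^{-1})^b,
\]
where the coefficient tensors $P_\alpha^b$ are polynomials of degree at most $k-|\alpha|$ in the entries of $A^{(m)}_j$, the Christoffel symbols, and their derivatives, and $P_{\alpha}^{b}$ for $|\alpha|=k$ is the identity pattern. The induction step applies $\nabla\otimes\tilde\nabla^{\otimes k}$, using $\nabla(f s)=df\otimes s+f\,A^{(m)}s$ and the Levi-Civita terms. Counting terms, the sum over frame indices $b$ contributes at most $p^m$ terms per component. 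Each factor of $A^{(m)}$ contributes at most $m$. The length of a chain is at most $r$.

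Taking pointwise norms, and converting tensor norms to coordinate components at a cost of a constant depending only on $g$ on $\overline U_j$, gives
\[
\sup_{U_j}|\nabla^{(i)}u|\le c\,(mp^m)^i\,\|us_j^{-1}\|_{C^i(U_j)}.
\]
The factor $p^m$ comes from comparing the Euclidean norm of the coefficient vector in $\C^{p^m}$ with the sup of components. Summing over $i\le r$ and maximizing over $j$ yields the right-hand inequality.

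For the reverse inequality, invert the triangular system. Solving for $\partial^\alpha(us_j^{-1})$ with $|\alpha|=k$ gives the $k$-th covariant components minus lower-order terms. Those lower-order terms have the same type of coefficient bounds, and we substitute recursively. Again at most $r$ levels occur, each costing at most a factor $c\,mp^m$, so we obtain $\|u\|_{C^r_{\mathrm{ext}}}\le c(mp^m)^r\|u\|_{C^r_{\mathrm{int}}}$.

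The only delicate point is bookkeeping. We must make sure the constants grow at most like $(mp^m)^r$ rather than, for instance, $(mp^m)^{r^2}$. This holds because each term in the expansion contains at most $r-|\alpha|$ connection factors, and the combinatorial multiplicities of the recursion depend only on $r$ and $\dim M$. Both can be absorbed into $c$, after enlarging it if needed.
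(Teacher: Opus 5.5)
Your proposal is correct and is essentially the paper's argument. Both work chart by chart in the orthonormal product frame and use that the connection matrix of $E^{\otimes m}$ is a sum of $m$ copies of that of $E$, so each additional derivative costs at most a factor $c\,mp^m$; this is most cleanly justified, when multiplying connection matrices, by $\|A^{(m)}_j\|_{\mathrm{op}}\le m\|A_j\|_{\mathrm{op}}$ rather than by your entry and row counts. Both then induct on the order of differentiation. The only difference is packaging: instead of writing out and inverting your expansion of $\nabla^{(k)}u$, the paper proves both inequalities by induction on $r$ simultaneously for all bundles $E^{\otimes m}\otimes(T^*M)^{\otimes\ell}$, so the inductive hypothesis applies directly to $\nabla u$, and a $C^r$ product estimate does the work of your degree count.
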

\begin{proof}
    We will prove a bit more, namely, that there are constants $c(r,\ell),c'(r,\ell)$ such that the two bounds hold for the norms on sections of the bundle $E^{\otimes m}\otimes (T^*M)^{\otimes \ell}$ with respect to the metric $h^{\otimes m}\otimes (g^{-1})^{\otimes \ell}$ and the connection $\nabla^{m,\ell}:=\nabla^{\otimes m}\otimes \tilde\nabla^{\otimes \ell}$. 
    
    Let $\varepsilon_j$ be an orthonormal frame for $T^*M$ on $O_j$.
    Let $\theta_j,\kappa_j$ be the matrix-valued 1-forms on $O_j$ such that the local representation of the connections is
    \begin{align*}
        \nabla u\cdot s_j^{-1}&=d\left(us_j^{-1}\right)+us_j^{-1}\theta_j,\\
        \tilde\nabla\rho\cdot\varepsilon_j^{-1}&=d\left(\rho\varepsilon_j^{-1}\right)+\rho\varepsilon_j^{-1}\kappa_j.
    \end{align*}
    Then $\theta_j\varepsilon_j^{-1},\kappa_j\varepsilon_j^{-1}$, which are the application of $\varepsilon_j^{-1}$ to each entry of $\theta_j,\kappa_j$, are $3$-index tensors. They are bounded on $U_j$ in the Euclidean $C^r$-norm for any~$r$. Let $n=\dim M$, and denote by
    \[
    \theta_{j,m,\ell}=\sum_{a +b = m-1} I_{p}^{\otimes a} \otimes \theta_j \otimes I_{p}^{\otimes b}\otimes I_{n^\ell}
     +\sum_{a+b=\ell-1} I_{p^m}\otimes I_{n}^{\otimes a}\otimes \kappa_j\otimes I_{n}^{\otimes b} 
    \]
    the matrix-valued 1-form representing $\nabla^{m,\ell}$ on $U_j$. Then,
    \begin{equation}\label{eq:thetajmell}
    \mathclap{
      \begin{aligned}
          \|\theta_{j,m,\ell}&\varepsilon_j^{-1} \|_{C^r} \leq 
    mp^{m-1}n^\ell\|\theta_j\varepsilon_j^{-1}\|_{C^r}+p^m\ell n^{\ell-1}\|\kappa_j\varepsilon_j^{-1}\|_{C^r} \\
    &\le m p^{m}\left(n^\ell\|\theta_j\varepsilon_j^{-1}\|_{C^r}+\ell n^{\ell-1}\|\kappa_j\varepsilon_j^{-1}\|_{C^r}\right)=:mp^{m}C_{j,r,\ell}.
      \end{aligned}}      
    \end{equation}
    
    Now we prove the claim by induction on $r$. In the case $r=0$ there is an equality of the two norms, and we denote them both by~$\|\cdot\|_{C^0}$.
    Assume the claim is true for $r$. 
    There is a constant $c_{n,r}$ depending only on $n$ and $r$ such that the standard $C^r$-norm on functions on $n$-dimensional Euclidean space satisfies
    \[
    \|FG\|_{C^r}\le c_{n,r}\|F\|_{C^r}\|G\|_{C^r}
    \]
    for any matrix-valued $F,G$, regardless of their dimensions.
    To prove the claim for $r+1$ we calculate for a section $u$ of $E^{\otimes m}\otimes (T^*M)^{\otimes \ell}$,
    \[\resizebox{\linewidth}{!}{$
    \begin{aligned}
       \left\|\nabla^{m,\ell}\right.&\left.\!\!u\right\|_{C^r_{\text{int}}}\le c(r,\ell+1)(mp^m)^r\max_j\left\|\nabla^{m,\ell}u\cdot \left(s_j^{\otimes m}\otimes\varepsilon_j^{\otimes(\ell+1)}\right)^{-1}\right\|_{C^r(U_j)} \\
        &=c(r,\ell+1)(mp^m)^r\max_j\left\|d\left(u\left(s_j^{\otimes m}\otimes\varepsilon_j^{\otimes \ell}\right)^{-1}\right)\varepsilon_j^{-1}+\right.\\
        &\left.\qquad\qquad\qquad\qquad\qquad\quad\ +\ \!u\left(s_j^{\otimes m}\otimes\varepsilon_j^{\otimes \ell}\right)^{-1}\theta_{j,m,\ell}\varepsilon_j^{-1}\right\|_{C^r(U_j)}\\
        &=c(r,\ell+1)(mp^m)^r\max_j\left\|\sum_i\frac{\partial}{\partial x^i}\left(u\left(s_j^{\otimes m}\otimes\varepsilon_j^{\otimes \ell}\right)^{-1}\right)dx^i\varepsilon_j^{-1}+\right.\\
        &\left.\qquad\qquad\qquad\qquad\qquad\quad\ +\ \!u\left(s_j^{\otimes m}\otimes\varepsilon_j^{\otimes \ell}\right)^{-1}\theta_{j,m,\ell}\varepsilon_j^{-1}\right\|_{C^r(U_j)}\\
        &\overset{\mathclap{\eqref{eq:thetajmell}}}{\le} c_{n,r}c(r,\ell+1) (mp^m)^r\max_j\left(\left(\sum_i\left\|dx^i\varepsilon_j^{-1}\right\|_{C^r(U_j)}+mp^mC_{j,r,\ell}\right)\times\right.\\
        &\qquad\qquad\qquad\qquad\qquad\qquad\left.\times\left\|u\left(s_j^{\otimes m}\otimes\varepsilon_j^{\otimes \ell}\right)^{-1}\right\|_{C^{r+1}(U_j)}\right).
    \end{aligned}
    $}\]
    So, for $$c(r+1,\ell)=1+c_{n,r}c(r,\ell+1)\max_j\left(\sum_i\left\|dx^i\varepsilon_j^{-1}\right\|_{C^r(U_j)}+C_{j,r,\ell}\right),$$
    we have
    \[
    \|u\|_{C^{r+1}_{\text{int}}}=\|u\|_{C^0}+\left\|\nabla^{m,\ell}u\right\|_{C^r_{\text{int}}}\le c(r+1,\ell)(mp^m)^{r+1}\|u\|_{C^{r+1}_{\text{ext}}}.
    \]
    
    For the lower bound, we will work similarly by induction. We have an analog of \eqref{eq:thetajmell} for $\|\theta_{j,m,\ell}dx^{-1}\|_{C^r}$.
    Call the corresponding constant $C_{j,r,\ell}'$. 
    
    Let $\left\{f_i:U\to \R^{m_i}\right\}_{i=1}^q$ be functions on an open set in Euclidean space, and let $f=f_1\oplus\cdots\oplus f_q:U\to\R^{m_1+\cdots+m_q}$. Then we have
    \begin{equation}\label{eq:ell1ell2stuffandthings}
    \begin{aligned}
    \sum_i\|f_i\|_{C^r}&=\sum_{|\alpha|\le r}\sum_i\sup\left|\frac{\partial^\alpha}{\partial x^\alpha}f_i\right|\le \sum _{|\alpha|\le r}q\sup\sum _i\left|\frac{\partial^\alpha}{\partial x^\alpha}f_i\right|\\
    &\le q\sum _{|\alpha|\le r}q^{1/2}\sup\left(\sum_i\left|\frac{\partial^\alpha}{\partial x^\alpha}f_i\right|^2\right)^{1/2}=q^{3/2}\sup\sum_{|\alpha|\le r}\left|\frac{\partial^\alpha}{\partial x^\alpha}f\right|\\
    &=q^{3/2}\|f\|_{C^r}.
    \end{aligned}
    \end{equation}
    Now, for a section $u$ of $E^{\otimes m}\otimes (T^*M)^{\otimes\ell}$ and for each $j$ we calculate
    \begin{equation*}
        \begin{aligned}
            \sum_i&\left\|\frac{\partial}{\partial x^i}\left(u\left(s_j^{\otimes m}\otimes \varepsilon_j^{\otimes \ell}\right)^{-1}\right) \right\|_{C^r(U_j)}\le\\
            &\overset{\mathclap{\eqref{eq:ell1ell2stuffandthings}}}{\le} n^{3/2} \left\|d\left(u\left(s_j^{\otimes m}\otimes \varepsilon_j^{\otimes \ell}\right)^{-1}\right)dx^{-1}\right\|_{C^r(U_j)}\\
            &\le n^{3/2}\left\|\left(\nabla^{m,\ell}u\cdot\left(s_j^{\otimes m}\otimes \varepsilon_j^{\otimes \ell}\right)^{-1}-\right.\right.\\
            &\qquad\qquad\ \left.\left.-\ \!u\left(s_j^{\otimes m}\otimes \varepsilon_j^{\otimes \ell}\right)^{-1}\theta_{j,m,\ell}\right)dx^{-1}\right\|_{C^r(U_j)}\\
            &\le n^{3/2}\left\|\nabla^{m,\ell}u\cdot\left(s_j^{\otimes m}\otimes \varepsilon_j^{\otimes (\ell+1)}\right)^{-1}\varepsilon_jdx^{-1}\right\|_{C^r(U_j)}\\
            &\qquad+n^{3/2}\left\|u\cdot\left(s_j^{\otimes m}\otimes \varepsilon_j^{\otimes \ell}\right)^{-1}\theta_{j,m,\ell}dx^{-1}\right\|_{C^r(U_j)}\\
            &\le n^{3/2}c_{n,r}\left(\left\|\nabla^{m,\ell}u\right\|_{C^r_{\text{ext}}}\left\|\varepsilon_jdx^{-1}\right\|_{C^r(U_j)}+\|u\|_{C^r_{\text{ext}}}mp^mC'_{j,r,\ell}\right)\\
            &\le n^{3/2}c_{n,r}\left(\left\|\varepsilon_jdx^{-1}\right\|_{C^r(U_j)}c'(r,\ell+1)(mp^m)^r\right.\\
            &\qquad\qquad\quad\ +\ \!C_{j,r,\ell}'c'(r,\ell)(mp^m)^{r+1}\Big)\|u\|_{C^{r+1}_{\text{int}}}.
        \end{aligned}
    \end{equation*}
    So, for
    \[\resizebox{\linewidth}{!}{
    $c'(r+1,\ell):=1+n^{3/2}c_{n,r}\max_j\left(\left\|\varepsilon_jdx^{-1}\right\|_{C^r(U_j)}c'(r,\ell+1)+C_{j,r,\ell}'c'(r,\ell)\right),
    $}
    \]
    we have
    \begin{align*}
    \|u\|_{C^{r+1}_{\text{ext}}}&\le \|u\|_{C^0}+\max_j\sum_i\left\|\frac{\partial}{\partial x^i}\left(u\left(s_j^{\otimes m}\otimes \varepsilon_j^{\otimes \ell}\right)^{-1}\right) \right\|_{C^r(U_j)}\\
    &\le c'(r+1,\ell)(mp^m)^{r+1}\|u\|_{C^{r+1}_{\text{int}}}
    \end{align*}
\end{proof}

The next Lemma~establishes the asymptotic growth of the constant appearing in the fundamental inequality for the operator $\overline\partial$ on $L^m$ as $m\to\infty$.

For any open set $W \subset \C^n$, we denote by $\|\cdot\|_{l,W}$ the $l$-th Sobolev $L^2$ norm on $W$ with respect to the Euclidean metric and by $H^l(W)$ the associated Sobolev space.
For a nonvanishing section $s$ of a holomorphic line bundle $L$, and $s^m$ the associated section of $L^m,$ we have
\[
\overline{\partial}s^m=ms^{m-1}\overline\partial s=\left( ms^{-1}\overline\partial s\right)s^m.
\]
\begin{lm}\label{lm:fundamental}
Let $(L,h)$ be a holomorphic Hermitian line bundle over an open set $\Omega\subseteq \C^n$. Let $s$ be a unitary section over a subset $O\subseteq \Omega$, let $V \subseteq O$ be relatively compact, and let $\tilde V\subseteq V$ be relatively compact.
Denote by $P_{m} : H^k(O) \to H^{k-1}(O)$ the differential operator
\[
P_{m}F=s^{-m}\overline{\partial}\left(Fs^{m}\right)=\overline{\partial}F+ms^{-1}\overline{\partial}s\cdot F.
\]
For each $k$ there is a constant $c=c\left(k,V,\tilde V\right)$ such that for any function~$F$,
\[
\left\Vert F\right\Vert _{k,\tilde V}\le cm^{k}\left(\left\Vert P_{m}F\right\Vert _{k-1,V}+\left\Vert F\right\Vert _{0,V}\right).
\]
\end{lm}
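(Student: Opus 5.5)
We deduce this from the standard interior elliptic estimate for $\overline\partial$, treating $m s^{-1}\overline\partial s\cdot F$ as a lower-order perturbation whose coefficient is $O(m)$. Since $O\subseteq\C^n$ carries the flat metric, $\overline\partial$ acting on functions (with values in $(0,1)$-forms) is an overdetermined elliptic operator: $\overline\partial^*\overline\partial=\frac14\Delta$ on functions. Hence for nested relatively compact open sets $W'\Subset W$ there is a constant $c_0(k,W,W')$ such that
\[
\|F\|_{k,W'}\le c_0\left(\|\overline\partial F\|_{k-1,W}+\|F\|_{0,W}\right).
\]
This follows from G\aa rding's inequality with cutoffs, or from the Fourier transform: for compactly supported $G$ we have $|\xi|^2|\hat G|^2=4\sum_j|\hat{\partial_{\bar z_j}G}|^2$. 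Write $\beta=s^{-1}\overline\partial s$. This is a smooth $(0,1)$-form on $O$ whose $C^{k}$ norms on $\overline V$ are bounded independently of $m$.

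The plan is an induction on $k$ along a chain of intermediate sets $\tilde V=V_k\Subset V_{k-1}\Subset\cdots\Subset V_0=V$, chosen once for each $k$. The case $k=0$ is trivial. For $k=1$ the basic estimate gives
\[
\|F\|_{1,V_1}\le c_0\left(\|P_mF\|_{0,V}+m\|\beta F\|_{0,V}+\|F\|_{0,V}\right)\le c\,m\left(\|P_mF\|_{0,V}+\|F\|_{0,V}\right).
\]

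For the inductive step, assume the estimate $\|F\|_{j,V_j}\le c_j m^j(\|P_mF\|_{j-1,V}+\|F\|_{0,V})$ holds for $j<k$. Apply the basic estimate on $V_k\Subset V_{k-1}$ to get
\[
\|F\|_{k,V_k}\le c_0\left(\|P_mF\|_{k-1,V_{k-1}}+m\|\beta F\|_{k-1,V_{k-1}}+\|F\|_{0,V}\right).
\]
The Leibniz rule gives $\|\beta F\|_{k-1,V_{k-1}}\le C\|F\|_{k-1,V_{k-1}}$, and by the inductive hypothesis this is at most $C c_{k-1}m^{k-1}(\|P_mF\|_{k-2,V}+\|F\|_{0,V})$. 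Multiplying by $m$ gives the factor $m^k$. Since $\|\cdot\|_{k-2}\le\|\cdot\|_{k-1}$, everything combines to $c\,m^k(\|P_mF\|_{k-1,V}+\|F\|_{0,V})$.

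The only point needing care is the basic interior estimate with arbitrary Sobolev order, together with keeping track that each application costs exactly one factor of $m$. Here we do not try to absorb the $m\beta F$ term, which is impossible for large $m$; instead we bound it by the previous step. The constant then depends only on $k$, the chosen chain of sets, and the $C^{k}$ norm of $\beta$ on $\overline V$, so it depends on $(k,V,\tilde V)$ once the data $s$ and $h$ are fixed.
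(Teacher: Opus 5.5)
Your proposal is correct and follows essentially the same argument as the paper: fix a nested chain $\tilde V=V_k\subseteq\cdots\subseteq V_0=V$ (each set relatively compact in the previous one), apply the interior elliptic estimate for $\overline\partial$ at each step, and bound the term $m\,s^{-1}\overline\partial s\cdot F$ by the previous step of the induction rather than absorbing it, so that each step costs exactly one factor of $m$. The only cosmetic difference is that you use the interior estimate with $\|F\|_{0,W}$ as the lower-order term, whereas the paper uses $\|F\|_{l-1,V_{l-1}}$ and feeds it into the induction hypothesis together with the $m\beta F$ term; this changes nothing.
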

\begin{proof}
In what follows let $c$ be changing.

Take a sequence $\tilde V=V_k\subseteq \overline V_k\subseteq V_{k-1}\subseteq\cdots \subseteq \overline V_1\subseteq V_0 =V $. We will show by induction that for each $l$,
\[
\|F\|_{l,V_l}\le cm^l\left(\|P_mF\|_{l-1,V}+\|F\|_{0,V} \right)
\]
and the result will follow by substituting $l=k$:

By the fundamental inequality for the elliptic operator $\overline{\partial}$,
we have
\begin{align*}
\left\Vert F\right\Vert_{l,V_l} &\le c\left(\left\Vert \overline{\partial}F\right\Vert _{l-1,V_{l-1}}+\left\Vert F\right\Vert _{l-1,V_{l-1}}\right)\\
&=c\left(\left\Vert P_{m}F-ms^{-1}\overline{\partial}s\cdot F\right\Vert _{l-1,V_{l-1}}+\left\Vert F\right\Vert _{l-1,V_{l-1}}\right)\\
 & \le c\left(\left\Vert P_{m}F\right\Vert _{l-1,V_{l-1}}+\left(1+m\left\Vert s^{-1}\overline{\partial}s\right\Vert _{C^{l-1}(V_{l-1})}\right)\left\Vert F\right\Vert _{l-1,V_{l-1}}\right)\\
 & \le c\left(1+\left\Vert s^{-1}\overline{\partial}s\right\Vert _{C^{l-1}(V_{l-1})}\right)m\left(\left\Vert P_{m}F\right\Vert _{l-1,V}+\left\Vert F\right\Vert _{l-1,V_{l-1}}\right).
\end{align*}

For $l=1$ the last inequality proves the induction base, and for $l\ge 2$ using the induction hypothesis for the term $\left\Vert F\right\Vert _{l-1,V_{l-1}}$ finishes the proof.
\end{proof}

\subsection{An almost-holomorphic approximation}\label{subsec:approx}
We start preparing the proof of Theorem~\ref{tm:approximation2} and use the notation thereof below.

By Lemma~\ref{lm:BSextpara}, let $\tilde e$ be an extension of $e$ to a section of $L$ over $X$
such that $\overline{\partial}\tilde e$ vanishes to $(N-1)$-st order along $\Gamma$,
where $N\in\mathbb{N}$ is to be determined later. Define $\tilde f_1=\tilde e^{-1}\overline{\partial}\tilde e$ where $\tilde e\ne 0$,
so that $\tilde f_1$ vanishes to $(N-1)$-st order along $\Gamma$. Note
that
\[
\overline{\partial}\tilde e^{m}=m\tilde e^{m-1}\overline{\partial}\tilde e=m\tilde f_1\tilde e^{m},
\]
and denote $a=\left|\tilde e\right|$. Use Lemma~\ref{lm:FunExt} to extend $ b$ to a function $\tilde b:X\to\C$ with
$$f_0:=\overline\partial \tilde b$$
vanishing to $(N-1)$-st order along $\Gamma$, and $\supp \tilde b\subseteq V$. Denote 
$$f_1= \tilde b\tilde f_1,$$
which vanishes to $(N-1)$-st order along $\Gamma$.

Take the function $\rho$ from Lemma~\ref{lm:rho} with $S=\Gamma$, defined in a neighborhood $U$ of $\Gamma$ which satisfies Lemmas~\ref{lm:Zrclosed} and~\ref{lm:a}, and such that $\tilde e\ne 0$ in $\overline U$ so $\tilde f_1$ is defined and bounded. Recall from Definition~\ref{df:Zr} that $Z_r=\{z\in U:\rho(z)<r\}$, and recall also the notation $R_m=m^{-1/2}\log m$.

Let $\eta:\mathbb{R}_{\ge0}\to[0,1]$ be a smooth bump function supported
in $[0,1)$ which equals $1$ on $[0,1/2]$. 
Define
\begin{equation}\label{eq:v}
    v=\eta\left(\frac{\rho^{2}}{4R_{m}^{2}}\right) \tilde b\tilde e^{m}
\end{equation}
and
\begin{equation}\label{eq:w}
    w=\overline{\partial}v=\left(\frac{1}{4}\eta'\left(\frac{\rho^{2}}{4R_{m}^{2}}\right)R_{m}^{-2}\tilde b\overline{\partial}\rho^{2}
+\eta\left(\frac{\rho^{2}}{4R_{m}^{2}}\right)\left(f_0+mf_1\right)\right)\tilde e^{m}.
\end{equation}
Note that $v$ is supported in $U$ for large $m$, since by Lemma~\ref{lm:Zrclosed} we have $\overline Z_{2R_m}\subset U$. Hence it is globally defined upon extending by $0$ outside of $U$.
\begin{lm}\label{lm:wbound}
    \[
    \|w\|^2=O\left(m^{2-N}\log^{2N}m\right).
    \]
\end{lm}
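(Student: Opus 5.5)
We split $w$ from \eqref{eq:w} into the cutoff term and the main term,
\[
w_1=\tfrac14\eta'\!\left(\tfrac{\rho^2}{4R_m^2}\right)R_m^{-2}\tilde b\,\overline\partial\rho^2\,\tilde e^m,\qquad
w_2=\eta\!\left(\tfrac{\rho^2}{4R_m^2}\right)(f_0+mf_1)\tilde e^m,
\]
and bound $\|w_1\|^2$ and $\|w_2\|^2$ separately, using $\|w\|^2\le 2\|w_1\|^2+2\|w_2\|^2$. Throughout we use that $|\tilde e^m|_h=a^m$, and that on $Z_{2R_m}$ we have $a\le 1$ by Lemma~\ref{lm:BSextpara} for $m$ large. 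Here $Z_{2R_m}$ shrinks into the neighborhood where $a\le1$, and $\overline Z_{2R_m}\subset U$ by Lemma~\ref{lm:Zrclosed}. All integrals are over the compact set $\overline U$ with respect to the fixed volume form $\vol_g$.

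For $w_1$: $\eta'(\rho^2/4R_m^2)$ vanishes unless $\rho^2/4R_m^2\in[1/2,1)$, i.e.\ unless $z\in Z_{2R_m}\setminus Z_{\sqrt2 R_m}\subset Z_{2R_m}\setminus Z_{R_m}$. On this annulus Lemma~\ref{lm:a} gives $a^m\le m^{-\gamma\log m}$. Since $\tilde b$, $\eta'$ and $\overline\partial\rho^2$ are bounded ($\rho^2$ is smooth), we get $|w_1|\le C R_m^{-2}m^{-\gamma\log m}= C m\log^{-2}m\cdot m^{-\gamma\log m}$. Integrating over a set of bounded volume gives $\|w_1\|^2=O(m^{2-2\gamma\log m})$. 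This decays faster than any power of $m$, so in particular it is $O(m^{2-N}\log^{2N}m)$.

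For $w_2$: $f_0$ and $f_1$ vanish to $(N-1)$-st order along $\Gamma$. By the discussion after Lemma~\ref{lm:Jcoor} together with Remark~\ref{rem:rho} and compactness, this gives $|f_0|+|f_1|\le C\rho^N$ on $\overline U$. On $\supp\eta(\rho^2/4R_m^2)\subset Z_{2R_m}$ we have $\rho<2R_m$ and $a^m\le 1$, hence
\[
|w_2|\le C(1+m)(2R_m)^N\le C' m\cdot m^{-N/2}\log^N m .
\]
Squaring and integrating over a set of bounded volume yields $\|w_2\|^2=O(m^{2-N}\log^{2N}m)$. This is exactly the claimed rate; we do not even need the smallness of the volume of $Z_{2R_m}$, which would only improve the bound.

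The main point requiring care is the first estimate. The factor $R_m^{-2}=m/\log^2 m$ coming from differentiating the cutoff could spoil the bound, and it is absorbed only thanks to the super-polynomial decay of $a^m$ on the annulus $Z_{2R_m}\setminus Z_{R_m}$ from Lemma~\ref{lm:a}. One also has to check that the support of $\eta'$ really lies in that annulus, which follows from $\sqrt2R_m>R_m$.

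A second minor point is uniformity of the constants. The constant in the vanishing-order bound $|f_i|\le C\rho^N$ comes from Taylor expansion in the finitely many slice charts of Remark~\ref{rem:rho}, so it is independent of $m$. Likewise, the bound $a\le 1$ holds on a fixed neighborhood containing $Z_{2R_m}$ for all large $m$.
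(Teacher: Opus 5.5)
Your proposal is correct and follows essentially the same route as the paper. You split $w$ into the cutoff term, which lives in the annulus $Z_{2R_m}\setminus Z_{R_m}$ where the super-polynomial decay $a^m\le m^{-\gamma\log m}$ of Lemma~\ref{lm:a} absorbs the factor $R_m^{-2}$, and the main term, bounded by $m\rho^N$ using $a\le 1$, $\rho<2R_m$ and the $(N-1)$-st order vanishing of $f_0,f_1$; you then integrate over the bounded-volume set $Z_{2R_m}$.
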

\begin{proof}
Using \eqref{eq:w} we have
\begin{align*}
    \left|w\right|^{2}&\le\frac{1}{8}\eta'\left(\frac{\rho^{2}}{4R_{m}^{2}}\right)^{2}R_{m}^{-4}\big|\tilde b\big|^2\left|\overline{\partial}\rho^{2}\right|^{2}a^{2m}
+2\eta\left(\frac{\rho^{2}}{4R_{m}^{2}}\right)^{2}\left|f_0+mf_1\right|^{2}a^{2m}\\
&=:W_1+W_2
\end{align*}
This expression vanishes out of $Z_{2R_{m}}$. The expression for $W_1$ vanishes inside $Z_{R_{m}}$ as $\eta'\left(\rho^{2}/4R_m^2\right)\equiv0$
there. Also, $\left|\overline{\partial}\rho^{2}\right|=O\left(1\right)$ as $\rho^2$ is smooth, and similarly $\big|\tilde b\big|,\eta'=O(1)$. Combining this with Lemma~\ref{lm:a}, we get
\[
W_1=O\left(R_{m}^{-4}a^{2m}\right)=O\left(m^{2}\log^{-4}m\cdot a^{2m} \right)=O\left(m^{2-2\gamma\log m}\log^{-4}m\right).
\]

We can bound
\[
W_2\le 2m^2\eta\left(\frac{\rho^{2}}{4R_{m}^{2}}\right)^{2}\left(\left|f_0\right|+\left|f_1\right|\right)^2a^{2m}.
\]
Now, $\left|f_0\right|+\left|f_1\right|=O\left(\rho^{N}\right)$ because $f_0,f_1$ vanish to order $N-1$ along~$\Gamma$, and $a\le1$ in $Z_{2R_{m}}$, so
\[
W_2=O\left(m^{2}R_{m}^{2N}\right)=O\left(m^{2-N}\log^{2N}m\right).
\]

For large enough $m$ the estimate on $W_2$ dominates the one on $W_1$. The
volume of $Z_{2R_{m}}$ is $O(1)$ (in fact, $O(R_m^n)$, but we don't need it),
so
\begin{align*}
\left\Vert w\right\Vert ^{2} & =\int_{X}\left|w\right|^{2}\vol_g=\int_{Z_{2R_{m}}}\left|w\right|^{2}\vol_g
  \le  O\left(m^{2-N}\log^{2N}m\right).
\end{align*}
\end{proof}
Cover $X$ by a finite family of holomorphic coordinate charts $\left\{ O_{t}\right\} $
on each of which ${L}$ is trivial. For some $\ell$ we may assume that $U=\bigcup_{t=1}^{\ell}O_{t}$
and that $O_{t}\cap\supp v=\emptyset$ for $t>\ell$ (and large enough
$m$). Let $\left\{ V_{t}\right\} $ be a cover of $X$ with $V_{t}$
relatively compact in $O_{t}$ for every $t$.
For each $t$ let $s_{t}$ be a unitary frame over $O_{t}$, and assume
that $s_{t}=\tilde e/a$ for $t\le\ell$. Define $w_{t}=s_{t}^{-m}w$.
We identify each $O_t$ and $V_t$ with an open set in $\C^n$.
\begin{lm}\label{lm:wderbound}
    Let $k<N$. Then for every multi-index $\alpha$ with $|\alpha|\le k$, the $\alpha$-th derivative of $w_t$ satisfies the $L^2$-bound
    \[
    \left\|\frac{\partial^\alpha}{\partial x^\alpha} w_t\right\|_{L^2(V_t)}=O\left(m^{1-N/2+2k}\log^{N-2k}m\right).
    \]
As a consequence,
    \[
    \|w_t\|_{k,V_t}=O\left(m^{1-N/2+2k}\log^{N-2k}m\right).
    \]
\end{lm}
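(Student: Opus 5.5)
We work chart by chart with the explicit formula for $w$. For $t>\ell$ the chart $O_t$ misses $\supp v$ for large $m$, so $w_t\equiv 0$ there. For $t\le\ell$ we have $s_t=\tilde e/a$, hence $\tilde e^m=a^m s_t^m$, and \eqref{eq:w} gives
\[
w_t=a^m\left(\tfrac14\eta'\!\left(\tfrac{\rho^2}{4R_m^2}\right)R_m^{-2}\tilde b\,\overline\partial\rho^2+\eta\!\left(\tfrac{\rho^2}{4R_m^2}\right)(f_0+mf_1)\right)=:a^m(A_1+A_2).
\]
We differentiate this with the Leibniz rule, letting $\partial^\alpha$ with $|\alpha|\le k$ fall on three kinds of factors, and estimate each resulting term pointwise on $Z_{2R_m}$. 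The $L^2(V_t)$ norm then costs only a factor $O(1)$ from the volume of $Z_{2R_m}$, exactly as in Lemma~\ref{lm:wbound}.

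\textbf{Derivatives of $a^m$.} Write $a^m=\exp(m\log a)$; this is legitimate since $a\neq 0$ on $\overline U$, so $\log a$ is smooth there. Each derivative of $a^m$ is then $a^m$ times a polynomial in derivatives of $m\log a$. The point to exploit is that $d(\log a)$ vanishes on $\Gamma$ by Lemma~\ref{lm:BSextpara}, so $|\partial(\log a)|=O(\rho)$. Thus a first derivative of $a^m$ costs $m\rho\lesssim mR_m=\sqrt m\log m$ rather than $m$, while higher derivatives of $\log a$ cost at most $m$. A crude uniform bound of $m$ per derivative certainly suffices: $|\partial^\beta a^m|=O(m^{|\beta|})\,a^m$ with $a\le 1$.

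\textbf{Derivatives of the cutoff.} Each derivative of $\eta(\rho^2/4R_m^2)$ or $\eta'(\cdot)$ produces a factor $R_m^{-2}\partial\rho^2=O(R_m^{-2}\rho)=O(R_m^{-1})$ on the support, or $R_m^{-2}$ from a second derivative of $\rho^2$. Hence each such derivative costs at most $R_m^{-2}=m\log^{-2}m$.

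\textbf{Derivatives of $f_0,f_1$.} These functions vanish to order $N-1$ along $\Gamma$, so a derivative of order $j\le k<N$ is $O(\rho^{N-j})=O(R_m^{N-j})$ on $Z_{2R_m}$. Each derivative therefore loses a factor $R_m^{-1}$.

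\textbf{Combining.} For the $A_2$ terms, the undifferentiated size is $m\,R_m^{N}=m^{1-N/2}\log^N m$, and each of the $k$ derivatives costs at most $\max(m,R_m^{-2})$, with a logarithmic factor no worse than $\log^{-2}m$ per derivative. This gives $O\bigl(m^{1-N/2+k}\log^{N}m\bigr)$, and since $m\ge \log^{2}m$ this is dominated by the claimed $m^{1-N/2+2k}\log^{N-2k}m$. The extra room ($2k$ rather than $k$) absorbs any slack, such as the cruder estimate $m^2$ per pair of derivatives.

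For the $A_1$ terms, the support lies in the annulus $Z_{2R_m}\setminus Z_{R_m}$, where Lemma~\ref{lm:a} gives $a^m\le m^{-\gamma\log m}$. Every polynomial loss, such as $R_m^{-2-2k}$ or $m^k$, is killed by this superpolynomial factor. Hence these terms are $O(m^{-K})$ for every $K$, negligible compared with the $A_2$ bound.

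Summing over $|\alpha|\le k$ then gives the Sobolev bound $\|w_t\|_{k,V_t}$.

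\textbf{Main obstacle.} The main obstacle is bookkeeping in the Leibniz expansion, in particular making sure that derivatives landing on $a^m$ never produce more than polynomial growth in $m$. The exponential form $a^m=\exp(m\log a)$ together with $a\le1$ handles this, since $a\le 1$ on $Z_{2R_m}$ by Lemma~\ref{lm:BSextpara}. I would first check that the chosen frame $s_t=\tilde e/a$ is smooth, which needs $\tilde e\neq0$ on $\overline U$, and that $\rho^2$ has bounded derivatives on $\overline U$. Both were arranged in the setup.
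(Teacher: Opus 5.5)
Your proposal is correct and follows essentially the paper's argument. You split $w_t$ into the $\eta'$-term, which is killed by the superpolynomial decay of $a^m$ on the annulus from Lemma~\ref{lm:a}, and the $\eta(f_0+mf_1)a^m$-terms, which are handled by the Leibniz rule using $\partial^\beta f_j=O(R_m^{N-|\beta|})$ on $Z_{2R_m}$. Your accounting charges each derivative at most $\max(m,R_m^{-2})=m$, rather than the paper's cruder $mR_m^{-2}$, so you even obtain the sharper pointwise bound $O(m^{1-N/2+k}\log^N m)$, which implies the stated one.
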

\begin{proof}
   The assertion is trivial for $t>\ell$ since $w_t=0$. Now assume $t\le\ell$.
    Recall that $s_{t}=\tilde e/a$, so that by~\eqref{eq:w},
\begin{align*}
w_{t}&=\frac{1}{4}\eta'\left(\frac{\rho^{2}}{4R_{m}^{2}}\right)R_{m}^{-2}\tilde b\overline{\partial}\rho^{2}a^{m}+\eta\left(\frac{\rho^{2}}{4R_{m}^{2}}\right)f_0a^{m}+\eta\left(\frac{\rho^{2}}{4R_{m}^{2}}\right)mf_1a^{m}\\
&=:w_{t1}+w_{t2}+w_{t3}.
\end{align*}

The term $\frac{\partial^\alpha}{\partial x^\alpha}w_{t1}$ is a combination of expressions, each of which
contains derivatives of the factors $a^{m},\eta'\left(\rho^{2}/4R_{m}^{2}\right)$ and $\tilde b$ of order no more than $k$.
Therefore it is a product $\delta_{1}\frac{m!}{(m-k)!}a^{m-k}$,
where $\delta_{1}$ is a function in $O\left(R_{m}^{-2(1+k)}\right)$.
Since $k$
is independent of $m$, we have $a^{-k}=O(1)$. Since $w_{t1}\equiv0$ in $Z_{R_{m}}$, we can apply Lemma~\ref{lm:a} and get
\[
\left|\frac{\partial^\alpha}{\partial x^\alpha}w_{t1}\right|=O\left(R_{m}^{-2(1+k)}\right)\cdot O\left(m^{k}\right)\cdot a^{m}=O\left(R_{m}^{-2(1+k)}m^{k-\gamma\log m}\right).
\]

The treatment of $\frac{\partial^\alpha}{\partial x^\alpha}w_{t2}$ is similar: given multi-indices $\alpha=(\alpha_1,...,\alpha_{2n}),\beta=(\beta_1,...,\beta_{2n})$, we write $\beta\le \alpha$ to indicate $\beta_j\le \alpha_j$ for each $1\le j\le 2n$. 
Then
\[
\frac{\partial^\alpha}{\partial x^\alpha}w_{t2}=\sum_{\beta\le\alpha}\delta_{\alpha-\beta}\frac{m!}{(m-|\alpha|-|\beta|)!}a^{m-|\alpha|-|\beta|}\frac{\partial^\beta}{\partial x^\beta}f_0,
\]
where each $\delta_{\kappa}$ is $O\left(R_{m}^{-2|\kappa|}\right)$.
Since $N>k$, $\frac{\partial^\beta}{\partial x^\beta}f_0$
vanishes on $\Gamma$ to $(N-\left|\beta\right|+1)$-st order, hence
is $O\left(R_{m}^{N-|\beta|}\right)$, so
\begin{align*}
\left|\delta_{\alpha-\beta}\frac{m!}{(m-|\alpha-\beta|)!}a^{m-|\alpha-\beta|}\frac{\partial^\beta}{\partial x^\beta}f_0\right| & =O\left(R_{m}^{-2|\alpha-\beta|}m^{|\alpha-\beta|}R_{m}^{N-|\beta|}\right)\\
&=O\left(m^{|\alpha|-|\beta|}R_{m}^{N-2|\alpha|+|\beta|}\right)\\
 & =O\left(m^kR_{m}^{N-2k}\right),
\end{align*}
which gives
\[
\left|\frac{\partial^\alpha}{\partial x^\alpha}w_{t2}\right|=O\left(m^kR_{m}^{N-2k}\right).
\]
The analysis of $\frac{\partial^\alpha}{\partial x^\alpha} w_{t3}$ is identical, but there is an additional $m$ factor, so
\[
\left|\frac{\partial^\alpha}{\partial x^\alpha}w_{t3}\right|=O\left(m^{k+1}R_{m}^{N-2k}\right).
\]
For $m$ large enough this estimate dominates the other two,
so altogether we have
\begin{align*}
    \left|\frac{\partial^\alpha}{\partial x^\alpha}w_{t}\right|^{2}=O\left(m^{2+2k}R_{m}^{2N-4k}\right),
\end{align*}
and
\[
\left\Vert \frac{\partial^\alpha}{\partial x^\alpha}w_{t}\right\Vert _{L^2\left(V_t\right)}^{2}=O\left(m^{2+2k}R_{m}^{2N-4k}\right)=O\left(m^{2-N+4k}\log^{2N-4k}m\right).
\]
\end{proof}
\subsection{Final construction and proof}
We continue with the notation of Sections~\ref{subsec:lemmas} and~\ref{subsec:approx}.
Let $C>0$ be a constant such that $\i \ric g\ge-C\omega$. Then,
\begin{align*}
    \i\ric g+\i F_{h^{m}}&=\i\ric g+2\pi m\omega\\
    &\ge 2\pi m\left(1-\frac{C}{2\pi m}\right)\omega\ge\pi m\omega
\end{align*}
for large $m$.

By Lemmas~\ref{lm:3.1} and~\ref{lm:wbound}, there is a section $u$ of $L^m$
satisfying $\overline{\partial}u=w$ and
\begin{equation}\label{eq:ineq}
    \left\Vert u\right\Vert ^{2}\le\frac{1}{\pi m}\left\Vert w\right\Vert ^{2}=O\left(m^{1-N}\log^{2N}m\right).
\end{equation}
Let $\left\{\tilde V_t\right\}$ be a cover with $\overline{\tilde V_t}\subseteq V_t$ for each $t$, and define $u_t=s_t^{-m}u$ for each $t$.

\begin{lm}\label{lm:ubound}
For $N>n+r$ we have 
\begin{align*}
\|u\|_{C^r_\mathrm{int}}&=O\left(m^{r+3(n+r)+2-N/2}\log^{N-2(n+r)}m\right),\\
\|u|_\Gamma\|_{C^r_\mathrm{int}}&=O\left(m^{3r+3(n+r)+2-N/2}\log^{N-2(n+r)}m\right).
\end{align*}
In particular, for $N>2r+6(n+r)+4$ we have $\|u\|_{C^r_\mathrm{int}}=o(1)$, and for $N>6r+6(n+r)+4$ we have $\|u|_\Gamma\|_{C^r_\mathrm{int}}=o(1)$.

\end{lm}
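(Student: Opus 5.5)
The plan is to bound $u$ locally in each chart $\tilde V_t$. Convert Sobolev bounds on $u_t$ into $C^r$ bounds via Sobolev embedding, and then pass from the external to the internal norm using Lemma~\ref{lm:Cr}.

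\textbf{Step 1: elliptic regularity.} On $O_t$ we have $P_m u_t = s_t^{-m}\overline\partial u = w_t$. Apply Lemma~\ref{lm:fundamental} with $k = n+r+1$, $V = V_t$, $\tilde V = \tilde V_t$, and then insert the bounds from Lemma~\ref{lm:wderbound} and \eqref{eq:ineq}:
\[
\|u_t\|_{n+r+1,\tilde V_t}\le c\,m^{n+r+1}\left(\|w_t\|_{n+r,V_t}+\|u_t\|_{0,V_t}\right).
\]
Because $s_t$ is unitary, $\|u_t\|_{0,V_t}\le \|u\|$, and this is $O(m^{(1-N)/2}\log^N m)$. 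Lemma~\ref{lm:wderbound} with $k=n+r<N$ gives
\[
\|w_t\|_{n+r,V_t}=O\left(m^{1-N/2+2(n+r)}\log^{N-2(n+r)}m\right).
\]
This term dominates. Here the hypothesis $N>n+r$ is exactly what makes Lemma~\ref{lm:wderbound} applicable. One point needs checking: Lemma~\ref{lm:wderbound} is stated for $w_t$ with $s_t=\tilde e/a$, which is unitary. For $t>\ell$ we have $w_t=0$, and there the $L^2$ term alone suffices.

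\textbf{Step 2: Sobolev embedding and change of norm.} On $\tilde V_t\subset\C^n=\R^{2n}$ we have $H^{n+r+1}\hookrightarrow C^r$, since $n+r+1>r+2n/2$. Hence $\|u_t\|_{C^r(\tilde V_t)}\le c\|u_t\|_{n+r+1,\tilde V_t}$, and so
\[
\|u\|_{C^r_{\mathrm{ext}}}=O\left(m^{3(n+r)+2-N/2}\log^{N-2(n+r)}m\right).
\]
Lemma~\ref{lm:Cr} applies with rank $p=1$, the cover $\tilde V_t\Subset O_t$, and frames $s_t$. It gives $\|u\|_{C^r_{\mathrm{int}}}\le c\,m^r\|u\|_{C^r_{\mathrm{ext}}}$, which produces the first estimate. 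The exponent $r+3(n+r)+2-N/2$ is negative once $N>2r+6(n+r)+4$, and in that case the bound is $o(1)$.

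\textbf{Step 3: restriction to $\Gamma$.} For $u|_\Gamma$ the internal norm uses the induced connection and the Levi-Civita connection of $\Gamma$. I would bound it by going through external norms on $\Gamma$. Take slice charts of $\tilde V_t\cap\Gamma$ with the same frames $s_t|_\Gamma$. Restricting the coefficient function $u_t$ to a slice does not increase Euclidean $C^r$ norms, up to constants coming from the chart change. Lemma~\ref{lm:Cr} is then used twice: on $X$ to go from internal to external, costing $m^r$, and on $\Gamma$ to go back from external to internal, costing another $m^r$. Relative to $\|u\|_{C^r_{\mathrm{ext}}}$ this gives an extra factor $m^{2r}$ on top of the $m^r$ already in the first estimate. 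That yields the exponent $3r+3(n+r)+2-N/2$, and hence $o(1)$ for $N>6r+6(n+r)+4$.

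\textbf{Main obstacle.} I expect the main difficulty to be the bookkeeping of powers of $m$ in Step~1. The estimate must use the cover $\tilde V_t\Subset V_t$, which is compatible with both Lemma~\ref{lm:fundamental} and Lemma~\ref{lm:Cr}. The frames $s_t$ must be unitary, so that $L^2$ norms of $u_t$ are controlled by $\|u\|$. One must also confirm that the $\|w_t\|$ term, rather than the $\|u\|$ term, is the dominant one.
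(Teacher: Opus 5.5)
Your proposal is correct and follows essentially the same route as the paper. It applies Lemma~\ref{lm:fundamental} with $k=n+r+1$, feeds in Lemma~\ref{lm:wderbound} and \eqref{eq:ineq}, uses Sobolev embedding on $\tilde V_t$ and one application of Lemma~\ref{lm:Cr} for $u$, and passes through slice charts with two further applications of Lemma~\ref{lm:Cr} for $u|_\Gamma$. The only slip is that $\|u_t\|_{0,V_t}\le\|u\|$ needs a constant (the paper's $G_t^{1/2}$) comparing the Euclidean and $g$-volume forms, which is harmless in the $O(\cdot)$ bounds.
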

\begin{proof}
Using Lemma~\ref{lm:Cr} for the line bundle $L$ so $p=1$, we get $\|u\|_{C^r_\mathrm{int}}=O(m^r)\cdot\max_t\|u_t\|_{C^r\left(\tilde V_t\right)}$, so for the estimate on $\|u\|_{C^r_\mathrm{int}}$ it suffices to show that $\|u_t\|_{C^r\left(\tilde V_t\right)}=O\left(m^{2(n+r)+2-N/2}\log^{N-2n}m\right)$ for each~$t$.

Let $P_{t,m}$ be the operator $P_{m}$ from Lemma~\ref{lm:fundamental} with respect to the frame $s_t$, i.e. $P_{t,m}F=s_t^{-1}\overline\partial(Fs_t) $. Note that $P_{t,m}u_{t}=w_{t}$.
By Lemma~\ref{lm:fundamental} for $k=n+r+1$, there is a constant $c_t$ such that
\begin{equation}\label{ineq:utwt}
    \|u_t\|_{n+r+1,\tilde V_t}\le c_tm^{n+r+1}\left(\|w_t\|_{n+r,V_t}+\|u_t\|_{0,V_t} \right).
\end{equation}
Let $G_t>0$ be an upper bound on $\left(\det(g_{ij})\right)^{-1}$ in $V_t$, and let $\vol_{\text{st}}=\left(\frac{i}{2}\right)^n\bigwedge\left(dz_{j}\wedge d\overline{z}_{j}\right)$
be the standard volume form on $\mathbb{C}^{n}$. Then
\[
\|u_t\|_{0,V_t}=\left(\int_{V_t}|u_t|^2\vol_\text{st}\right)^{1/2}\le G_t^{1/2}\left(\int_{V_t}|u|_{h}^2\vol_g\right)^{1/2}\le G_t^{1/2}\|u\|.
\]
Combining this with inequalities~\eqref{eq:ineq} and~\eqref{ineq:utwt} and Lemma~\ref{lm:wderbound} with $k=n+r$, we get
\begin{align*}
\left\Vert u_{t}\right\Vert _{n+r+1,\tilde V_t}&=c_tm^{n+r+1}\left(O\left(m^{1-N/2+2(n+r)}\log^{N-2(n+r)}m\right)\right.\\
&\qquad\qquad\qquad+O\left(m^{1/2-N/2}\log^{N}m\right)\!\Big).
\end{align*}
The first summand is dominant, so
\begin{align*}
    \left\Vert u_{t}\right\Vert _{n+r+1,\tilde V_t}&=m^{n+r+1}\cdot O\left(m^{1-N/2+2(n+r)}\log^{N-2(n+r)}m\right)\\
    &=O\left(m^{3(n+r)+2-N/2}\log^{N-2(n+r)}m\right).
\end{align*}

Now since $2\cdot (n+r+1)>2n+2\cdot r$, we can apply the Sobolev embedding theorem on the set $\tilde V_t$, and taking the Sobolev constant $K$ to be the maximum over $t$, we get that
\begin{align*}
    \|u_t\|_{C^r\left(\tilde V_t\right)}\le K\|u_t\|_{n+r+1,\tilde V_t}=O\left(m^{3(n+r)+2-N/2}\log^{N-2(n+r)}m\right),
\end{align*}
and this finishes the estimate on $\|u\|_{C^r_\mathrm{int}}$.

Now cover $\Gamma$ by slice charts $U_j$ relatively compact in larger slice charts, on which $L$ is trivial. We identify each $U_j$ with an open set in $\C^n$, and $U_j\cap \Gamma$ with $U_j\cap \R^n$. Taking again orthonormal frames $s_j$ and defining $u_j=s_j^{-1}u$, we have
\begin{equation*}
    \begin{aligned}
    \|u|_\Gamma\|_{C^r_\mathrm{int}}&\overset{{\text{Lem.}~\ref{lm:Cr}}}{=}O(m^r)\cdot\max_j\|u_j|_{U_j\cap \R^n}\|_{C^r}\\
    &\overset{\hphantom{\text{Lem.}~\ref{lm:Cr}}}{=}O(m^r)\cdot\max_j\sum_{\substack{\alpha=(\alpha_1,...,\alpha_n)\\|\alpha|\le r}}\sup_{U_j\cap \R^n}\left|\frac{\partial^\alpha}{\partial x^\alpha}u_j\right|\\
    &\overset{\hphantom{\text{Lem.}~\ref{lm:Cr}}}{\le} O(m^r)\cdot\max_j\sum_{\substack{\alpha=(\alpha_1,...,\alpha_{2n})\\|\alpha|\le r}}\sup_{U_j}\left|\frac{\partial^\alpha}{\partial x^\alpha}u_j\right|\\
    &\overset{{\text{Lem.}~\ref{lm:Cr}}}{=} O(m^{2r})\|u\|_{C^r_\mathrm{int}}\\
    &\overset{\hphantom{\text{Lem.}~\ref{lm:Cr}}}{=}O\left(m^{3r+3(n+r)+2-N/2}\log^{N-2(n+r)}m\right).
\end{aligned}
\end{equation*}
\end{proof}
\begin{proof}[Proof of Theorem~\ref{tm:approximation2}]
    Recall the definition of $v$ in equation \eqref{eq:v}.
    Take a large enough $N$ such that the second assertion of Lemma~\ref{lm:ubound} holds.
    Let $s=v-u$, so that $\overline{\partial}s=w-w=0$, i.e. $s$ is holomorphic. Then $s|_{\Gamma}-be^m$ is just $-u|_{\Gamma}$, and for large enough $m$ Lemma~\ref{lm:ubound} implies that its $C^r$-norm is less than $\varepsilon$. Moreover, outside of $V$ we have $s=-u$, so again its $C^r$-norm there is less than $\varepsilon$. 
\end{proof}

\section{Proofs of the main theorems}\label{sec:Pfs}
\subsection{Quantization separates positive Lagrangians}
\begin{proof}[Proof of Theorem~\ref{tm:separation}]
Fix some $e_j\in \cP_{\Gamma_j}$ for $j=0,1$.
Let $\emptyset\ne U\subset\overline U\subseteq \Gamma_0\setminus \Gamma_1$ be an open set, and let $b:\Gamma_0\to [0,1]$ be a bump function supported in $\Gamma_0\setminus\Gamma_1$ with $b|_{U}\equiv 1$.

For any $\varepsilon>0$ take $s\in \H_m$ from Theorem~\ref{tm:approximation1} with $V=X\setminus\Gamma_1$ and $r=0$, for large enough $m\in\N$. Then $|s|_h<\varepsilon$ on $\Gamma_1$ and $|s-be_0^m|_h<\varepsilon$ on $\Gamma_0$.

Let $M$ be a bound for $\left|\Omega\right|_g$. Then
\begin{align*}
    \left|\psi_{\Gamma_1,e_1}(s)\right|=\left|\int_{\Gamma_1}\frac{s}{e_1^m}\Omega \right|\le\int_{\Gamma_1}|s|_h|\Omega|_g\vol_{\Gamma_1}< M\vol(\Gamma_1)\varepsilon
\end{align*}
and
\begin{align*}
    \left|\psi_{\Gamma_0,e_0}(s)\right|&=\left|\int_{\Gamma_0}\frac{be_0^m}{e_0^m}\Omega+\int_{\Gamma_0}\frac{s-be_0^m}{e_0^m}\Omega \right|\\
    &\ge \left|\int_{\Gamma_0} b\Omega\right|-\int_{\Gamma_0}\left| s-be_0^m\right|_hM\vol_{\Gamma_0}\\
    &> \re\int_{\Gamma_0} b\Omega-M\vol(\Gamma_0)\varepsilon\\
    &\ge\int_{U}\re\Omega-M\vol(\Gamma_0)\varepsilon.
\end{align*}
For small enough $\varepsilon$ we have $\left|\psi_{\Gamma_1,e_1}(s)\right|<\left|\psi_{\Gamma_0,e_0}(s)\right|$. Hence, for all $z,w\in S^1$ we have
\[
\varphi_m(\Gamma_0,ze_0)(s)=z^{-m}\psi_{\Gamma_0,e_0}(s) \ne w^{-m}\psi_{\Gamma_1,e_1}(s)=\varphi_m(\Gamma_1,we_1)(s).
\]

\end{proof}

\subsection{Non-degeneracy of the metric}
\begin{proof}[Proof of Theorem~\ref{tm:pm}]
    Let $\Gamma_0,\Gamma_1\in \OO^\epsilon$. By Theorem~\ref{tm:separation}, for $m$ sufficiently large, $\varphi_m(\cP_{\Gamma_0})$ and $\varphi_m(\cP_{\Gamma_1})$ are disjoint compact subsets of~$\H^\vee_m$. Let $\delta>0$ be the distance between them.

    Let $\gamma(t)=\Gamma_t,t\in [0,1]$ be any piecewise smooth path in $\OO^\epsilon$ between $\Gamma_0,\Gamma_1$. Recall the flat structure on $\cP$ from Definition~\ref{df:cP}, the horizontal subspaces of which are given explicitly by Lemma~\ref{lm:connection}. Take a horizontal lift $\tilde \gamma(t)=(\Gamma_t,e_t)$ in $\cP$.
    By Corollary~\ref{cy:derivatives}, under the canonical identifications $T_{(\Gamma,e)}\cP=C^\infty(\Gamma) $ and $T_\Gamma\OO=\T{\Gamma}=\{h:\int h\re\Omega=0\},$ we have
    
    $$\dot\gamma(t)=d\pi_{\tilde\gamma(t)}\dot{\tilde\gamma}(t)=\dot{\tilde\gamma}(t).$$ 
    Hence,
    \[
    \ell(\gamma)=\int \sqrt{G\left(\dot\gamma(t),\dot\gamma(t)\right)}dt=\int\sqrt{\overline G\left(\dot{\tilde\gamma}(t),\dot{\tilde\gamma}(t)\right)}dt=\ell\left(\tilde\gamma\right).
    \]
Thus, by Theorem~\ref{tm:bound} we have
    \begin{align*}
        \delta&\le \ell\left(\varphi_m\circ\tilde\gamma\right)=\int\left\|d(\varphi_m)_{\tilde{\gamma}(t)}\dot{\tilde\gamma}(t)\right\|dt\\
        &\le C_m\int\left\|\dot{\tilde\gamma}(t)\right\|dt=C_m\ell(\tilde\gamma)\\
        &=C_m\ell(\gamma). 
    \end{align*}
That is, $\ell(\gamma)\ge \frac{\delta}{C_m}$. Therefore,
    \[
    d(\Gamma_0,\Gamma_1)=\inf_{\gamma}\ell(\gamma)\ge\frac{\delta}{C_m}>0.
    \]
\end{proof}

\bibliographystyle{amsabbrvcnobysame}
\bibliography{ref}

\end{document}